\renewcommand{\emph}[1]{\textit{#1}}
\definecolor{brown}{cmyk}{0, 0.72, 1, 0.45}
\definecolor{grey}{gray}{0.5}
\newcommand{\ignore}[1]{}
\def\cA{{\cal A}}
\def\cB{{\cal B}}
\def\cT{\mathcal{T}}
\def\pee{{\mathcal P}}
\newcommand{\set}[1]{\left\{#1\right\}}
\def\cP{\mathcal{P}}
\newcommand{\gap}[1]{\mbox{\hspace{#1 in}}}
\newcommand{\proofstart}{{\noindent \bf Proof\hspace{2em}}}
\newcommand{\proofend}{\hspace*{\fill}\mbox{$\Box$}\\ \medskip}
\def\ii_(#1,#2){i_{#1}^{#2}}
\renewcommand{\S}[1]{S^{(#1)}}
\newcommand{\hS}[1]{\hat{S}^{(#1)}}
\def\LL{\Lambda}
\def\a{\alpha}
\def\b{\beta}
\def\d{\delta}
\def\e{\varepsilon}
\def\f{\phi}
\def\g{\gamma}
\def\G{\Gamma}
\def\z{\zeta}
\def\th{\theta}
\def\l{\lambda}
\def\m{\mu}
\def\n{\nu}
\def\p{\pi}
\def\r{\rho}
\def\s{\sigma}
\def\t{\tau}
\newcommand{\rdup}[1]{\left\lceil #1 \right\rceil}
\newcommand{\rdown}[1]{\mbox{$\left\lfloor #1 \right\rfloor$}}
\def\cE{\mathcal{E}}
\def\prob{\Pr}
\newcommand{\brac}[1]{\left( #1 \right)}
\newcommand{\expect}{\operatorname{\bf E}}
\def\E{\expect}
\renewcommand{\Pr}{\operatorname{\bf Pr}}
\newcommand\bfrac[2]{\left(\frac{#1}{#2}\right)}
\newtheorem{theorem}{Theorem}
\newtheorem{lemma}{Lemma}
\newtheorem{corollary}[lemma]{Corollary}
\newtheorem{remthm}[lemma]{Remark}
\newenvironment{remark}{\begin{remthm}\it }{\end{remthm}}%
\newcounter{thmtemp}
\newenvironment{proof}{\proofstart}{\proofend}
\newcommand{\nospace}[1]{}
\def\path{\operatorname{PATH}}
\newcommand{\eee}{\mathbb{E}}
\newcommand{\Bin}{\ensuremath{\operatorname{Bin}}}
\newcommand{\beq}[1]{\begin{equation}\label{#1}}
\def\eeq{\end{equation}}
\def\cC{{\cal C}}
\begin{document}

\title{Elegantly colored paths and cycles in edge colored random graphs}
\date{\today}

\author{Lisa Espig
\thanks{Department of Mathematical Sciences,
Carnegie Mellon University, Pittsburgh PA15213,
Research supported in part by NSF grant DMS-6721878,
\hbox{e-mail}~{\small\texttt{lespig@andrew.cmu.edu}}}
\and
Alan Frieze
\thanks{Department of Mathematical Sciences,
Carnegie Mellon University, Pittsburgh PA15213,
Research supported in part by NSF grant DMS-6721878,
\hbox{e-mail}~{\small\texttt{alan@random.math.cmu.edu}}}
\and
Michael Krivelevich\thanks{School of Mathematical Sciences,
Raymond and Beverly Sackler Faculty of Exact Sciences, Tel Aviv
University, Tel Aviv, 6997801, Israel,
Research supported in part by a USA-Israel BSF grant and by a grant
from the Israel Science Foundation.
\hbox{e-mail}~{\small\texttt{krivelev@post.tau.ac.il}}}}

\maketitle

\markboth{}{}
\thispagestyle{empty}

\begin{abstract}
We first consider the following problem. We
are given a fixed perfect matching $M$ of $[n]$ and we
add random edges one at a time until there is a Hamilton
cycle containing $M$. We show that w.h.p. the hitting time
for this event is the same as that for the first time
there are no isolated vertices in the graph induced by the random edges.
We then use this result for the following problem.
We generate random edges and randomly color them black or white.
A path/cycle is said to be \emph{zebraic} if the colors alternate
along the path. We show that w.h.p. the hitting time
for a zebraic Hamilton cycle
coincides with every vertex meeting at least one edge of each color.
We then consider some related problems and (partially) extend our results to multiple colors. We also briefly consider directed versions.
\end{abstract}

\section{Introduction}
This paper studies the existence of nicely structured objects in (randomly) colored
random graphs. Our basic interest will be in what we call \emph{zebraic} paths and
cycles. We assume that the edges of a graph $G$ have been colored black or white. A path
or cycle will be called \emph{zebraic} if the edges alternate in color along the path. We view this as a variation on the usual theme of \emph{rainbow} paths and cycles that have been well-studied. Rainbow Hamilton cycles in edge colored complete graphs were first studied in Erd\H{o}s, Ne\v{s}et\v{r}il and R\"odl \cite{ENR}. Colorings were constrained by the number of times, $k$, that an individual color could be used. Such a coloring is called $k$-bounded. They showed that allowing $k$ to be any constant, there was always a rainbow Hamilton cycle, provided that the number of vertices $n$ was sufficiently large. Hahn and Thomassen \cite{HT} were next to consider this problem and they showed that $k$ could grow as fast as $n^{1/3}$ and there still be a rainbow Hamilton cycle and conjectured that the growth rate of $k$ could in fact be linear. In an unpublished work R\"{o}dl and Winkler \cite{W} in 1984 improved this to $n^{1/2}$. Frieze and Reed \cite{FR} improved this to $k=O(n/\log n)$ and finally Albert, Frieze and Reed \cite{AFR} (and Rue) improved the upper bound on $k$ to $n/64$. In another line of research, Cooper and Frieze \cite{CF} discussed the existence of rainbow Hamilton cycles in the random graph $G^{(q)}_{n,p}$ which consists of the random graph $G_{n,p}$ where each edge is independently and randomly given one of $q$ colors. Here and elsewhere, we use ``chosen randomly'' to signify ``chosen uniformly at random''. They showed that if $p\geq \frac{21\log n}{n}$ and $q\geq 21n$ then with high probability (w.h.p.), i.e. probability $1-o(1)$, there is a rainbow colored Hamilton cycle. Frieze and Loh \cite{FL} improved this to $p\geq \frac{(1+o(1))\log n}{n}$ and $q\geq n+o(n)$. Ferber and Krivelevich \cite{FK15} improved it further to $p=\frac{\log n+\log\log n+\omega(n)}{n}$ and $q\geq n+o(n)$. Bal and Frieze \cite{BF15} considered the case $q=n$ and showed that $p\geq \frac{K\log n}{n}$ suffices for large enough $K$. Ferber and Krivelevich \cite{FK15} proved that if $p\gg \frac{\log n}{n}$ and $q=Cn$ colors are used, then w.h.p. $G_{n,p}$ contains $(1-o(1))np/2$ edge-disjoint rainbow Hamilton cycles, for $C$ large enough.

In this paper we study the existence of other colorings of paths and cycles. Our first result does not at first sight fit into this framework. Let $n$ be even and let $M_0$ be an arbitrary perfect matching of the complete graph $K_n$. Now consider the random graph process $\{G_m\}=\{([n],E_m)\}$ where $E_m=\set{e_1,e_2,\ldots,e_m}$ is obtained from $E_{m-1}$ by adding a random edge $e_m\notin E_{m-1}$, for $m=0,1,\ldots, N=\binom{n}{2}$.

Let
$$\t_1=\min\set{m:\d(G_m)\geq 1}\,,$$
where $\d$ denotes minimum degree. Then let
$$\t_H=\min\set{m:G_m\cup M_0\text{ contains a Hamilton cycle }H\supseteq M_0}.$$
\begin{theorem}\label{th1}
$\t_1=\t_H$ w.h.p.
\end{theorem}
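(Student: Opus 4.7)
The inequality $\tau_1 \le \tau_H$ is immediate: in any Hamilton cycle $H$ with $M_0 \subseteq H$, every vertex has two incident cycle-edges, one of which is its $M_0$-edge, so the other must be a non-$M_0$ edge in $G_m$; hence $\delta(G_m) \ge 1$. The substantive direction is $\tau_H \le \tau_1$ w.h.p. Setting $m^* = \tau_1$, I would reformulate the problem as follows: $G_{m^*} \cup M_0$ contains a Hamilton cycle through $M_0$ if and only if there exists a perfect matching $M_1 \subseteq G_{m^*}$ such that $M_0 \cup M_1$ is a single alternating cycle rather than a disjoint union of $\ge 2$ alternating cycles. Equivalently, after contracting each $M_0$-edge to a super-vertex one obtains a multigraph $G'$ on $n/2$ vertices, and one seeks a Hamilton cycle in $G'$ satisfying the \emph{port constraint}: at every super-vertex the two cycle-edges emanate from its two distinct contracted endpoints.

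At time $\tau_1$, standard random-graph estimates hold w.h.p.: $m^* = \tfrac12 n \log n + O(n)$, $G_{m^*}$ has strong expansion (every small set has linearly-growing external neighborhood), only $O(\log n)$ vertices have degree exactly $1$, and $G_{m^*}$ already contains many perfect matchings (classical Erd\H os--R\'enyi hitting-time result). My strategy is to pick a perfect matching $M_1 \subseteq G_{m^*}$ and, while $M_0 \cup M_1$ has more than one cycle, apply a local merging switch: given $M_1$-edges $e_1 = \{a_1,b_1\}$ and $e_2 = \{a_2,b_2\}$ lying in two distinct cycles, replace $\{e_1,e_2\}$ by $\{\{a_1,a_2\},\{b_1,b_2\}\}$ (or the alternative pairing) provided the two replacement edges are in $G_{m^*} \setminus M_0$. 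One checks directly that such a swap merges the two cycles into one while preserving the perfect-matching property, so at most $n/2$ steps suffice to finish.

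The main obstacle is guaranteeing the availability of a valid switch throughout the iteration. For pairs of cycles containing vertices with healthy $G_{m^*}$-degree, expansion together with the density $p \approx (\log n)/n$ provides many switching options, and a union bound handles the $O(n)$ switching steps. The delicate case is a short cycle of $M_0 \cup M_1$ containing a vertex of $G_{m^*}$-degree $1$: such a vertex has essentially no switching flexibility at all, since its only non-$M_0$ neighbour is forced. I would handle this by a preprocessing step, choosing the initial $M_1$ carefully so that the (few) degree-$1$ vertices and their forced partners land on a single long cycle of $M_0 \cup M_1$; this is feasible because such vertices are sparsely distributed and the bulk of $G_{m^*}$ has abundant flexibility. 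The subsequent iterated merging then operates entirely in the bulk of the graph where expansion supplies plenty of switching pairs, and a union bound over the at most $n/2$ merges finishes the proof. An essentially equivalent alternative is to run a P\'osa rotation-extension argument on $G'$ while bookkeeping the port usage at each step; this faces the same low-degree obstacle and is resolved by the same preprocessing.
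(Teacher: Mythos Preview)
Your two-edge switching step does not survive short cycles. Suppose $M_0\cup M_1$ contains a $4$-cycle $C$ (with $M_1$ a uniformly random matching this occurs with probability bounded away from $0$, and in fact $\Theta(\log n)$ cycles of length $O(1)$ are typical). Then $C$ contributes only two $M_1$-edges; for each such $e_1=\{a_1,b_1\}$ and each candidate $e_2=\{a_2,b_2\}$ in another cycle you need \emph{both} $\{a_1,a_2\}$ and $\{b_1,b_2\}$ in $G_{m^*}$. Even granting expansion, $a_1$ has $O(\log n)$ neighbours $a_2$, and for each of these the partner $b_2$ is determined by $M_1$; the chance that $b_1$ is adjacent to at least one of those $O(\log n)$ forced vertices is $O((\log n)^2/n)$. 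Summing over the two choices of $e_1$ still gives $o(1)$, so w.h.p.\ no merging switch out of $C$ exists at all. The obstruction is intrinsic to demanding two fresh random edges simultaneously; it is not the degree-$1$ phenomenon you isolate, and your preprocessing of degree-$1$ vertices does not touch it.

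There is a second, independent gap: you reuse the randomness of $G_{m^*}$ at every iteration. Having conditioned on $M_1\subseteq G_{m^*}$ and on the outcomes of earlier switches, the edges you next probe are no longer fresh, and a union bound over the remaining merges is unjustified as stated. The paper addresses both issues by a different architecture: it partitions the edge process into disjoint time intervals, uses one interval to build $M_1$ (proving $M_0\cup M_1$ has $O(\log n)$ cycles with only $o(n)$ vertices on short ones), uses a second interval to absorb short cycles via trees of proper near-$2$-factors in which each extension consumes a \emph{single} unconditioned edge (so depth $\approx \log n/\log\log n$ yields $n^{2/3+o(1)}$ endpoints on each side and a closing edge exists with high probability), and then spends a third interval on a second-moment computation to assemble the surviving $O(\log n)$ long cycles into a Hamilton cycle. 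Your P\'osa alternative on the contracted graph points in the right direction, but without this edge-splitting it hits the same conditioning obstruction, and you would still need a mechanism that handles constant-length cycles with one edge at a time.
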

\begin{remark}\label{remmm}
In actual fact there are two slightly different versions. One where we insist that $M_0\cap E_m=\emptyset$ and one where $E_m$ is chosen completely independently of $M_0$. Our proof of the theorem covers both cases. We will first give a proof under the assumption that $E_m$ is chosen independently and then in Remark \ref{disjoint} see how to obtain the other case.
\end{remark}

\medskip

We note that Robinson and Wormald \cite{RW} considered a similar problem with respect to random regular graphs. They showed that one can choose $o(n^{1/2})$ edges at random,
orient them and then w.h.p. there will be a Hamilton cycle containing these edges and following the orientations.

Theorem \ref{th1} has an easy corollary that fits our initial description. Let $\{G^{(r)}_m\}$ be an $r$-colored version of the graph process. This means that $G^{(r)}_m$ is obtained from $G^{(r)}_{m-1}$ by adding a random edge and then giving it a random color from $[r]$. Let $E_{m,i}$ denote the edges of color $i$ in $\{G^{(r)}_m\}$ for $i=1,2,\ldots,r$. When $r=2$ denote the colors by $black$ and $white$ and let $E_{m,b}=E_{m,1},E_{m,w}=E_{m,2}$. Then let $G^{(b)}_m$ be the subgraph of $G^{(2)}_m$ induced by the black edges and let $G^{(w)}_m$ induced by the white edges. Let 
$$\t_{1,1}=\min\set{m:\d(G^{(b)}_m),\d(G^{(w)}_m)\geq 1}\,,$$
and let
$$\t_{ZH}=\min\set{m:G_m^{(2)}\text{ contains a zebraic Hamilton cycle}}.$$
\begin{corollary}\label{cor1}
$\t_{1,1}=\t_{ZH}$ w.h.p.
\end{corollary}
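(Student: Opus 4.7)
The inequality $\tau_{ZH}\geq \tau_{1,1}$ is immediate: in any zebraic Hamilton cycle each vertex meets one edge of each colour, so both $G^{(b)}$ and $G^{(w)}$ have minimum degree at least one. For the other direction, let $m^{*}=\tau_{1,1}$ and let $B,W$ denote the sets of black and white edges at step $m^{*}$. The plan is two-step: (i) extract a perfect matching $M\subseteq B$, and (ii) invoke Theorem~\ref{th1} on the white edges with $M_0=M$ to obtain a Hamilton cycle of $M\cup W$ containing $M$. Such a cycle alternates edges of $M$ (black) with edges of $W$ (white), and is therefore zebraic.

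Step (i). Since colours are assigned independently with probability $1/2$, the set $B$ is, conditional on $|B|=b$, uniform among $b$-subsets of $[n]^{(2)}$. Standard coupon-collector and Chernoff estimates show $m^{*}=(1+o(1))n\lo n$ and $|B|=(1+o(1))\tfrac12 n\lo n$ w.h.p., putting $B$ at the classical random-graph perfect-matching threshold. Together with $\delta(B)\geq 1$ (by definition of $\tau_{1,1}$) and the Bollob\'as--Thomason hitting-time result for perfect matchings in $G_{n,m}$ applied to the black sub-process, $B$ contains a perfect matching $M$ w.h.p.

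Step (ii). Condition on $B$ and on the choice of $M\subseteq B$. Then $W$ is uniform among $(m^{*}-|B|)$-subsets of $[n]^{(2)}\setminus B \subseteq [n]^{(2)}\setminus M$. Since $|B\setminus M|=O(n\lo n)=o(n^{2})$, the distribution of $W$ lies within $o(1)$ total variation of the uniform distribution on $(m^{*}-|B|)$-subsets of $[n]^{(2)}\setminus M$, which is exactly the setting of Theorem~\ref{th1} (the version with $E_m\cap M_0=\emptyset$). By definition of $\tau_{1,1}$, the white graph already has $\delta\geq 1$ at step $m^{*}$, so Theorem~\ref{th1} yields a Hamilton cycle $H\supseteq M$ in $M\cup W$ w.h.p.; its $n/2$ non-$M$ edges are automatically in $W$, so $H$ is zebraic.

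The main obstacle is step (ii): the white edges live on the random ground set $[n]^{(2)}\setminus B$ rather than on $[n]^{(2)}\setminus M$, and the stopping rule $m^{*}=\tau_{1,1}$ mixes the white and black sub-processes, so the white graph at step $m^{*}$ is not literally distributed as the random-edge process of Theorem~\ref{th1}. Justifying the application of Theorem~\ref{th1} therefore requires either the coupling to a genuine random-edge process on $[n]^{(2)}\setminus M$ sketched above, or a direct verification that the proof of Theorem~\ref{th1} is robust to shrinking the ground set by $o(n^{2})$ edges and to the mild conditioning imposed by $\tau_{1,1}$.
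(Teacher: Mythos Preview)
Your outline is the right shape, but the total-variation step in (ii) is false, and this is the crux. With $|B\setminus M|\sim \tfrac12 n\log n$, $|W|\sim \tfrac12 n\log n$, and ground set of size $\sim n^2/2$, the probability that a uniform $|W|$-subset of $\binom{[n]}{2}\setminus M$ avoids $B\setminus M$ is
\[
\prod_{i=0}^{|W|-1}\frac{\binom{n}{2}-|B|-i}{\binom{n}{2}-|M|-i}
\;\approx\;\Bigl(1-\tfrac{|B\setminus M|}{n^2/2}\Bigr)^{|W|}
\;\approx\;\exp\!\Bigl(-\tfrac{(n\log n/2)^2}{n^2/2}\Bigr)
\;=\;e^{-(\log n)^2/2},
\]
so the two distributions are at total variation distance $1-o(1)$, not $o(1)$. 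Equivalently, conditioning a genuine random-edge process on $\binom{[n]}{2}\setminus M$ to avoid the $\Theta(n\log n)$ extra black edges is conditioning on an event of probability $e^{-\Theta(\log^2 n)}$, far smaller than any polynomial error bound Theorem~\ref{th1} could supply. Monotonicity does not rescue you either: you need the non-$M$ edges of the Hamilton cycle to be white, so throwing the black edges $B\setminus M$ back in is useless.

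The paper handles exactly this obstacle, and the mechanism is the reason the proof of Theorem~\ref{th1} insists on $O(n^{-0.51})$ failure probabilities throughout. Rather than black-boxing Theorem~\ref{th1}, the paper passes to a with-replacement edge process, takes a black matching $M_0$ and a white matching $M_1$ directly (so Phase~1 is already done), and then reruns Phases~2 and~3 on the resulting alternating $2$-factor. The coupling back to the without-replacement process is governed by the set $R$ of repeated edges, which has size $O(\log^2 n)$ w.h.p., and one shows $\Pr(M_1\cap R=\emptyset)\ge n^{-1/2-o(1)}$. Since Phases~1--2 fail with probability $O(n^{-0.51})$, they still succeed w.h.p.\ after conditioning on $M_1\cap R=\emptyset$; Phase~3 simply avoids the $O(\log^2 n)$ edges of $R$. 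Your closing sentence (``direct verification that the proof of Theorem~\ref{th1} is robust\ldots'') points in the right direction, but the actual device --- with-replacement sampling to shrink the forbidden set from $\Theta(n\log n)$ to $O(\log^2 n)$, matched against the carefully engineered $n^{-0.51}$ bounds --- is the missing idea.
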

Our next result is a zebraic analogue of \emph{rainbow connection}. For a connected graph $G$, its rainbow connection $rc(G)$, is the minimum number $r$ of colors needed for the following to hold: The edges of $G$ can be $r$-colored so that every pair of vertices is connected by a rainbow path, i.e. a path in which no color is repeated. Recently, there has been interest in estimating this parameter for various classes of graph, including random graphs (see, e.g., \cite{DFT, FT12, HR12, KKS15}). By analogy, we say that a connected graph with a two-coloring of its edges is \emph{zebraicly connected} if there is a zebraic path joining every pair of vertices.
\begin{theorem}\label{th4}
At time $\t_1$, $G_{\t_1}$ with a random black-white coloring of its edges is zebraicly connected, w.h.p.
\end{theorem}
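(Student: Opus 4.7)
My plan is to prove Theorem~\ref{th4} by a directed reachability argument in a natural auxiliary digraph $\hat G$ built from $G_{\tau_1}$ and the random coloring. Let $\hat G$ have vertex set $V\times\{B,W\}$ and an arc $(v,c)\to(u,\bar c)$ for every edge $vu$ of $G_{\tau_1}$ colored $\bar c$; then a zebraic $v$--$u$ path of length $\geq 1$ in $G_{\tau_1}$ is equivalent to a directed path in $\hat G$ from an initial state $(v',c)$ of $v$ (with $vv'$ of color $c$) to a terminal state $(u,c')$. Note the out-degree of $(v,c)$ is $d_{\bar c}(v)$ and its in-degree is $d_c(v)$, so $\hat G$ is a ``directed with asymmetric degrees'' object.

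The first step is to record structural facts about $G_{\tau_1}$ that hold w.h.p. By standard random-graph computations: $\delta(G_{\tau_1})=1$ with $\Theta(\log n)$ degree-$1$ vertices; the expected number of pairs (degree-$1$ vertex, neighbor of degree $\leq \log n/(\log\log n)^2$) is $o(1)$, so every degree-$1$ vertex has its unique neighbor of degree $\geq \log n/(\log\log n)^2$; and the subgraph on vertices of degree $\geq \log n/\log\log n$ is a strong expander. Exposing the random $2$-coloring next, Chernoff gives $d_B(v),d_W(v)\in[d_v/3,2d_v/3]$ for every vertex of degree $\geq \log\log n$, and a simple first-moment bound leaves only $O(\sqrt n)$ ``monochromatic'' vertices (all incident edges of a single color), all of low degree.

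The main step is to show that $\hat G$ has a strong component $C^*$ of size $(2-o(1))n$ absorbing almost everything relevant. I would run a zebraic BFS in $\hat G$ from a typical state: thanks to color-balance and core expansion, the out-neighborhood grows at rate $\Theta(\log n)$ per step, so $\Omega(n)$ states are reached in $O(\log n/\log\log n)$ rounds, and the same holds for the reverse BFS in $\hat G$. A sprinkling argument, revealing a small final slice of the edges (or equivalently of the coloring) at the end, then merges the forward- and backward-reachable sets into a single strong component $C^*$. Low-degree and monochromatic vertices are handled individually using the structural lemma: each such $v$ has (w.h.p.) a high-degree, color-balanced neighbor $v'$, so every initial state of $v$ lies in $C^*$ (via its lone arc to/from $(v',\cdot)$), and every accessible terminal state of $v$ is reached from $C^*$ by a single final arc. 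Chaining the pieces together gives a zebraic path between any ordered pair of vertices.

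The principal obstacle is this giant-strong-component step. The asymmetric in/out-degrees of $\hat G$ make classical random-graph strong-connectivity arguments less immediately applicable, and the $\Theta(\log n)$ degree-$1$ vertices together with the $O(\sqrt n)$ monochromatic vertices each contribute trivial singleton SCCs that must be shown to attach to $C^*$ through their (provably) high-degree color-balanced neighbors. Managing these singletons and executing the sprinkling cleanly while the coloring and the edge process are jointly random is the technical heart of the argument.
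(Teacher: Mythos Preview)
Your approach via a giant strong component in $\hat G$ is genuinely different from the paper's, which works \emph{pairwise}. For each pair of large vertices $x,y$ (those with at least $L_0=\log n/100$ edges of each color in $G_{t_0}$), the paper grows alternating-color BFS trees of artificially capped branching factor $\ell_0=\log n/200$ and depth $\ell_1=\frac{2\log n}{3\log\log n}$, producing $n^{2/3+o(1)}$ leaves on each side. Because the branching is capped, the edges between the two leaf sets remain \emph{unconditioned}, and a direct calculation gives a correctly-colored connecting edge with failure probability $n^{-K}$ for any constant $K$; a union bound over all $O(n^2)$ pairs finishes the large--large case. Small vertices are then attached by first-moment computations ($Z_0,Z_1,Z_2$) showing that w.h.p.\ every small vertex has a neighbor incident to edges of both colors whose remaining neighbors are large. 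No global strong-component statement is ever formulated.

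Your sprinkling step is the point of real concern. Reserving a final slice of the \emph{edges} is awkward at the hitting time $\tau_1$; reserving a slice of the \emph{coloring} via two-round exposure can be made to work, but you have not specified which you intend, and either requires care you have not indicated. The paper's capped-branching device is precisely what replaces sprinkling: it manufactures the needed independence in situ, with no separate round. Separately, your Chernoff claim that $d_B(v),d_W(v)\in[d_v/3,2d_v/3]$ for \emph{all} vertices of degree $\geq\log\log n$ is too optimistic: the per-vertex failure probability is only $\exp(-\Omega(\log\log n))$, nowhere near $o(1/n)$, so a union bound fails badly. You will therefore have many color-imbalanced (not merely monochromatic) vertices of moderate degree, and these, rather than the $O(\sqrt n)$ monochromatic singletons, are the population requiring special treatment. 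The paper's threshold $L_0=\Theta(\log n)$ is chosen exactly so that Chernoff beats the union bound, and the resulting $\leq n^{2/3}$ ``small'' vertices are what actually drive the attachment argument.
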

We consider now how we can extend our results to more than two colors. Suppose we have $r$ colors $[r]$ and that $r\mid n$. We would like to consider the existence of Hamilton cycles where the $i$th edge has color $(i\mod r)+1$. Call such a cycle \emph{ $r$-zebraic}. Our result for this case is not as tight as for the case of two colors. We are not able to prove a hitting time version. We will instead satisfy ourselves with a result for $G_{n,p}^{(r)}$. Let
$$p_r=\frac{r}{\a_r}\frac{\log n}{n}$$
where
$$\a_r=\rdup{\frac{r}{2}}.
$$
\begin{theorem}\label{th5}
Let $\e>0$ be an arbitrary positive constant and suppose that $r\geq 2$.
$$\lim_{n\to\infty}\Pr(G_{n,p}^{(r)}\text{ contains an $r$-zebraic Hamilton
cycle})=\begin{cases}
         0&p\leq (1-\e)p_r\\1&p\geq (1+\e)p_r
        \end{cases}.
$$
\end{theorem}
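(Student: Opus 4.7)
For a vertex $v$, let $C(v)\subseteq[r]$ be the set of colors of edges incident to $v$ in $G_{n,p}^{(r)}$. On any $r$-zebraic Hamilton cycle the two cycle edges at $v$ have colors consecutive modulo $r$, so $C(v)$ contains two vertices adjacent in the cycle graph $C_r$ on the color set $[r]$. Call $v$ \emph{bad} if $C(v)$ is an independent set of $C_r$, equivalently if $[r]\setminus C(v)$ contains a vertex cover of $C_r$. The minimum vertex cover of $C_r$ has size $\alpha_r=\lceil r/2\rceil$; this is the source of $\alpha_r$ in the threshold $p_r$, and since a bad vertex cannot lie on any $r$-zebraic Hamilton cycle, ruling them out is necessary.

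\textbf{Lower bound.} Fix a minimum vertex cover $W$ of $C_r$, $|W|=\alpha_r$, and let $X$ count vertices $v$ with $C(v)\cap W=\emptyset$; every such $v$ is bad. For each other vertex $u$, the edge $vu$ carries a color in $W$ with probability $p\alpha_r/r$, so
\[
\Pr(v\text{ counted by }X) \;=\; (1-p\alpha_r/r)^{n-1} \;=\; n^{-(1-\e)+o(1)}
\]
at $p=(1-\e)p_r$. Hence $\E X = n^{\e+o(1)}\to\infty$. The events at distinct $v,v'$ share only the single edge $vv'$, so their joint probability differs from the product by a factor $(1-p\alpha_r/r)^{-1}=1+o(1)$; a standard second-moment calculation then gives $\V X = (1+o(1))\E X$, and Chebyshev yields $X\geq 1$ w.h.p., precluding an $r$-zebraic Hamilton cycle.

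\textbf{Upper bound.} At $p=(1+\e)p_r$, summing the bad-event probability over the boundedly many minimum covers of $C_r$ gives $\E[\#\text{bad vertices}] = O(n^{-\e}) = o(1)$, so no vertex is bad w.h.p. To build the cycle, I would apply two-round exposure: write $p=p_1+p_2-p_1p_2$ with $p_1=(1+\e/2)p_r$, reveal $G_1=G_{n,p_1}^{(r)}$ first and hold the independent sprinkle $G_2=G_{n,p_2}^{(r)}$ in reserve. In $G_1$ one establishes (i) no bad vertex, and (ii) for each color $c$ the subgraph $G_1^{(c)}\sim G_{n,p_1/r}$ has the weak P\'osa-style expansion that small sets $S$ have $(1-o(1))(p_1n/r)|S|$ color-$c$ neighbors. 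One then runs a rotation--extension argument adapted to the $r$-zebraic structure, tracking the residue class modulo $r$ of each current endpoint, to produce a long $r$-zebraic path with $\Omega(n)$ reachable (endpoint, residue) pairs at each side. Finally $G_2$ is sprinkled to supply a closing edge of the prescribed color, completing the $r$-zebraic Hamilton cycle w.h.p.

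\textbf{Main obstacle.} The hard step is the rotation--extension. A rotation at the right endpoint $v_k$ along a chord $v_kv_i$ requires both that the chord have color $(i\bmod r)+1$ \emph{and} that $k-i\equiv 2\pmod r$, so that the reversed tail $v_kv_{k-1}\cdots v_{i+1}$ still obeys the cyclic color pattern; only a $1/r$ fraction of chords remain usable. The definition $\alpha_r=\lceil r/2\rceil$ is precisely what makes the residual expansion sufficient: the color class $G_1^{(c)}$ has mean degree $(1+\e/2)\log n/\alpha_r$, and the factor $1/\alpha_r$ exactly compensates for the restricted rotation in the adapted expansion inequality. Showing that reachable-endpoint sets still grow to linear size under this double restriction is where the bulk of the technical work lies.
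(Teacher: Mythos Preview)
Your lower bound matches the paper's and is fine. The upper bound, however, contains a genuine error: rotation--extension cannot be adapted to $r$-zebraic paths when $r\geq 3$. Your claimed sufficient condition ``$k-i\equiv 2\pmod r$'' is wrong. If the path $v_0,\ldots,v_k$ has edge $v_{j-1}v_j$ of color $j\bmod r$ and you rotate along a chord $v_kv_i$, then in the new path $v_0,\ldots,v_i,v_k,v_{k-1},\ldots,v_{i+1}$ the edge $v_{k-1}v_k$ sits in position $i+2$ and must have color $i+2$, while edge $v_{k-2}v_{k-1}$ sits in position $i+3$ and must have color $i+3$. Their original colors are $k$ and $k-1$, so you need simultaneously $k\equiv i+2$ and $k-1\equiv i+3\pmod r$, i.e.\ $2\equiv 0\pmod r$. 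For $r\geq 3$ this is impossible: reversing a segment turns the cyclic color pattern $\ldots,1,2,3,\ldots$ into $\ldots,3,2,1,\ldots$, which is the opposite orientation and can never splice back into the original. No nontrivial rotation preserves the $r$-zebraic property, so the endpoint sets never grow and the argument collapses.

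This is precisely why the paper abandons rotation--extension here and instead enforces the color structure \emph{externally}: it partitions $[n]$ into equal parts $V_1,\ldots,V_r$, shows (via the ``no poor vertex'' calculation, which is where $\alpha_r$ actually enters) that each vertex can be placed in some $V_i$ where it has $\Omega(\log n)$ color-$i$ neighbors in $V_{i+1}$ and color-$(i-1)$ neighbors in $V_{i-1}$, and then finds a perfect matching $M_i$ of color $i$ between $V_i$ and $V_{i+1}$ for each $i$. The union $\bigcup M_i$ is automatically an $r$-zebraic $2$-factor; one then runs the Phase~2/Phase~3 machinery of Theorem~\ref{th1}, swapping only color-$1$ edges between $V_1$ and $V_r$, which preserves the zebraic structure throughout. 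If you want to salvage a path-growing approach, you would have to treat the problem as a \emph{directed} Hamilton cycle in an auxiliary $r$-partite digraph and use techniques for that setting, not P\'osa rotations.
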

The proofs of Theorems \ref{th1}--\ref{th5} will be given in Sections \ref{th1proof}--\ref{th5proof}.
\subsection{Directed Versions}\label{directed}
There are some very natural directed versions of these results. With respect to Theorem \ref{th1} one can consider the directed graph process where the edges of the complete digraph $\vec{K}_n$ are randomly ordered as $e_1,e_2,\ldots,e_{n(n-1)}$. We can then consider a sequence of digraphs $D_m=([n],\set{e_1,e_2,\ldots,e_m}),m\geq 1$ and consider hitting times for various properties. For example, suppose in addition one is given a perfect matching $M=\set{f_1,f_2,\ldots,f_{n/2}}$ together with an orientation of each edge in $M$. One can ask for the likely hitting time for the existence of a directed Hamilton cycle that contains $M$ and respects the given orientation. Assume w.l.o.g. that $f_i=(2i-1,2i)$ for $i=1,2,\ldots,n/2$, so that $f_i$ is oriented from $2i-1$ to $2i$. Let $\vec{\t}_{H}$ be the hitting time for the existence of such a cycle. Let $\vec{\t}_1$ be the hitting time for each $1\leq i\leq n/2$ to have an in-neighbor in $n/2+1,n/2+2,\ldots,n$ and for each $n/2+1,n/2+2,\ldots,n$ to have an out-neighbor in $1\leq i\leq n/2$. Clearly $\vec{\t}_{H}\geq \vec{\t}_1$.
\begin{theorem}\label{th1d}
$\vec{\t}_1=\vec{\t}_H$ w.h.p.
\end{theorem}
Our other results will have directed analogs too. Suppose then that $D_{n,p}^{(r)},m\geq 1$ is an $r$-colored version of the directed graph $D_{n,p}$. A directed $r$-zebraic Hamilton cycle is the directed analog what we see in Theorem \ref{th5}. Then we have
\begin{theorem}\label{th5d}
Let $\e>0$ be an arbitrary positive constant and suppose that $r\geq 2$.
$$\lim_{n\to\infty}\Pr(D_{n,p}^{(r)}\text{ contains an $r$-zebraic directed  Hamilton
cycle})=\begin{cases}
         0&p\leq (1-\e)p_r\\1&p\geq (1+\e)p_r
        \end{cases}.
$$
\end{theorem}
Notice that we do not claim a hitting time version for the case $r=2$. It is unclear what the simple necesary condition should be. We discuss this further in Section \ref{diranalog}.

There is a notion of directed zebraic connection when we 2-color a digraph and ask for a directed zebraic path from any vertex to any other vertex. Let $\vec{\t}_{1,1}$ be the hitting time for $D_m$ to have i-degree and out-degree at least one. 
\begin{theorem}\label{th4d}
At time $\vec{\t}_{1,1}$, $D_{\vec{\t}_{1,1}}$ with a random black-white coloring of its edges is directed zebraicly connected, w.h.p.
\end{theorem}
We will briefly discuss the proofs of these directed analogs in Section \ref{diranalog}.
\section{Notation}\label{notation}
All logarithms will have base $e$ unless explicitly stated otherwise.

For a graph $G=(V,E)$ and $S,T\subseteq V$ we let $e_G(S)$ denote the number of edges contained in $S$, $e_G(S,T)$ denote the number of edges with one end in $S$ and the other in $T$. Let $e_G(S)=e_G(S,S)$ and let $N_G(S)$ denote the set of neighbors of $S$ that are not in $S$.

We next list certain values and notation that we will use throughout our proofs. They are here for easy reference. The reader is encouraged to skip reading this section and to just refer back as necessary.
\begin{align*}
&t_0=\frac{n}{2}(\log n -2\log\log n)\text{ and }
t_1=\frac{n}{2}(\log n +2\log\log n)\\
&t_2=\frac{t_0}{10}\text{ and }t_3=\frac{t_0}{5}\text{ and }t_4=\frac{9t_0}{10}.\\
&\z_i=t_i-t_{i-1}\text{ for }i=3,4.\\
&p_i=\frac{t_i}{\binom{n}{2}},\,i=0,1,2.\\
&n_0=\frac{n}{\log^2n}\text{ and }n_0'=\frac{n_0}{\log^4n}\text{ and }
n_1=\frac{n}{10\log n}.\\
&n_b=\frac{n\log\log\log n}{\log\log n}\text{ and }n_c =\frac{200n}{\log n}.\\
&L_0=\frac{\log n}{100}\text{ and }L_1=\frac{\log n}{\log\log n}.\\
&\ell_0=\frac{\log n}{200}\text{ and }\ell_1=\frac{2\log n}{3\log\log n}\text{ and }
\n_L=\ell_0^{\ell_1}=n^{2/3+o(1)}.
\end{align*}
The following graphs and sets of vertices are used.
\begin{align*}
&\Psi_0=G_{t_2}\setminus M_0=([n],E_{t_2}\setminus M_0).\\
&V_0=\set{v\in [n]:d_{\Psi_0}(v)\leq L_0}.\\
&\Psi_1=\Psi_0\cup\set{e\in E_{\t_1}\setminus  E_{t_2}:e\cap V_0\neq\emptyset}.\\
&V_\l=\set{v\in [n]:\;v\text{ is large}}.\\
&V_\s=[n]\setminus V_{\l}.\\
&E_B=\set{e\in E_{t_4}\setminus E_{t_3}: e\cap V_0=\emptyset}.\\
&V_\t=\set{v\in [n]\setminus V_0:\deg_{E_B}(v)\leq L_0}.
\end{align*}
The definition of ``large'' depends on which theorem we are proving.

Sometimes in what follows we will treat certain values as integer, when they should really be rounded up or down. We do this for conveneience and claim that rounding either way will not affect the validity of what is claimed.
\section{Probabilistic Inequalities}
We will need standard estimates on the tails of various random variables.

{\bf Chernoff Bounds:} Let $B(n,p)$ denote the binomial random variable where $n$ is the number of trials and $p$ is the probability of success.
\begin{align}
&\Pr(|B(n,p)-np|\geq \e np)\leq 2e^{-\e^2np/3}\qquad\text{for }0\leq\e\leq 1.\label{chern1}\\
&\Pr(B(n,p)\geq anp)\leq \bfrac{e}{a}^{anp}\qquad\text{for }a>0.\label{chern2}
\end{align}
For proofs, see the appendix of Alon and Spencer \cite{AS}.

{\bf McDiarmid's Inequality:} Let $Z=Z(Y_1,Y_2,\ldots,Y_n)$ be a random variable where $Y_1,Y_2,\ldots,Y_n$ are independent for $i=1,2,\ldots,n$. Suppose that
$$|Z(Y_1,\ldots,Y_{i-1},Y_i,Y_{i+1},\ldots,Y_n)-Z(Y_1,\ldots,Y_{i-1},\widehat{Y}_i,Y_{i+1},\ldots,Y_n)|\leq c_i$$
for all $Y_1,Y_2,\ldots,Y_n,\widehat{Y}_i$ and $1\leq i\leq n$. Then
\beq{mcd}
\Pr(|Z-\E(Z)|\geq t)\leq 2\exp\set{-\frac{t^2}{2(c_1^2+c_2^2+\cdots+c_n^2)}}.
\eeq
For a proof see for example \cite{FK15B}(Lemma 21.16) or \cite{JLR}(Remark 2.28).
\section{Proof of Theorem \ref{th1}}\label{th1proof}
\subsection{Outline of proof}
It is well known (see for example \cite{FK15B}(Theorem 4.2) and \cite{JLR}(Section 5.1)) that w.h.p. we have $t_0\leq \t_1\leq t_1$.

Our strategy for proving Theorem \ref{th1} is broadly in line with the 3-phase algorithm described in \cite{CF1}.
\begin{description}
\item[(a)]
We will take the first $t_3$ edges plus all of the next $\t_1-t_3$ edges incident to vertices that have a low degree in $G_{t_2}$. We argue that w.h.p. this contains a perfect matching $M_1$ that is disjoint from $M_0$. The union of $M_0,M_1$ will then have $O(\log n)$ components w.h.p.
\item[(b)] $M_0\cup M_1$ induces a 2-factor made up of alternating cycles. We then use a selection of about $\z_4$ edges from $E_{t_4}\setminus E_{t_3}$ to make the minimum cycle length $\Omega(n/\log n)$. This selection is carefully designed to avoid dependence issues, as is the case of the selection in (c).
\item[(c)] We then use a large subset of the final $t_2$ edges to create a Hamilton cycle containing $M_0$. This involves a second moment calculation. The edges used to create the cycle here are from $E_{t_0}\setminus E_{t_4}$. It follows that w.h.p. we will have created a Hamilton cycle contained in $G_{\t_1}$.
\end{description}
We are working in a different model to that in \cite{CF1} and there are many more conditioning problems to be overcome. For example, in \cite{CF1}, it is very easy to show that the random digraph $D_{3-in,3-out}$ contains a set of $O(\log n)$ vertex disjoint cycles that contain all vertices. Here we have to build a perfect matching $M_1$ from scratch and to avoid several conditioning problems. The same is true for (b) and (c). The broad strategy is the same, the details are quite different.
\subsection{Phase 1: Building $M_1$}
We begin with $\Psi_0=G_{t_2}\setminus M_0$. Then let $V_0$ denote the set of vertices that have degree at most $L_0=\frac{\log n}{100}$ in $\Psi_0$. Now create $\Psi_1=([n],E_1)$ by adding those edges in $E_{\t_1}\setminus E_{t_2}$ that are incident with $V_0$ and are disjoint from $M_0$. We argue that w.h.p. $\Psi_1$ is a random graph with minimum degree one in which almost all vertices have degree $\Omega(\log n)$. Furthermore, we will show that w.h.p. $\Psi_1$ is an expander, and then it will not be difficult to show that it contains the required perfect matching $M_1$.

Let a vertex be \emph{large} if its degree in $G_{t_1}$ is at
least $L_0$
and \emph{small} otherwise. Let $V_\l$ denote the set of
large vertices and
let $V_\s$ denote the set of small vertices.

The calculations for the next lemma will simplify if we observe
the following: Suppose that $m=Np$. It is known that for any monotone property of graphs
\beq{mon}
\Pr(G_m\in \cP)\leq 3\Pr(G_{n,p}\in \cP).
\eeq
In general we have for not necessarily monotone properties:
\beq{notmon}
\Pr(G_m\in \cP)\leq 3m^{1/2}\Pr(G_{n,p}\in \cP).
\eeq
For proofs of \eqref{mon}, \eqref{notmon} see Bollob\'as \cite{Book}(Theorem 2.2)
or Frieze and Karo\'nski \cite{FK15B}(Lemmas 1.2 and 1.3) or Janson, {\L}uczak and Ruci\'nski \cite{JLR}(Lemma 1.10).

We will have reason to deal with a random sequence of multi-graphs defined as follows: Let $x_1,x_2,\ldots,x_t,\ldots,$ be random sequence where for all $i\geq 0$, $x_{i+1}$ is chosen uniformly at random from $[n]$, independently of $x_1,x_2,\ldots,x_i$. For a positive integer $t$ we let $\G_t$ be the multi-graph with edges $e_1,e_2,\ldots,e_t$ where $e_i=\set{x_{2i-1},x_{2i}}$ for $i\geq 1$. If after removing the loops and repeats of edges from $\G_t$ we have $\t$ edges then the graph we obtain has the same distribution as $G_\t$. Given this, we couple $\G_t$ with $G_\t$ where $\t=\t(\G_t)$ is a random variable.

Let $Z_1$ denote the number of loops and let $Z_2$ denote the number of repeated edges in $\G_{t_2}$. Now $Z_1$ is distributed as $\Bin(t_2,1/n)$ and then the Chernoff bound \eqref{chern2} implies that
\beq{Z1}
\Pr(Z_1\geq \log^2n)\leq e^{-\log^2n}.
\eeq
We are doing more than usual here, because we need probability $O(n^{-0.51})$, rather than just probability $o(1)$. We require this in order to enable us to easily  handle the case where we have to choose edges disjoint from $M_0$, as explained in Remark \ref{disjoint}. There is one exception to this probabilistic requirement. Let $\cT$ be the event that $t_0\leq\t_1\leq t_1$. We do not require that $\Pr(\cT)=1-O(n^{-0.51})$. A probability of $1-o(1)$ will suffice.

Now $Z_2$ is dominated by $\Bin(t_2,t_2/N)$ and then the Chernoff bound \eqref{chern2} implies that
\beq{Z2}
\Pr(Z_2\geq \log^3n)\leq e^{-\log^3n}.
\eeq

The properties in the next lemma will be used to show that w.h.p. $\Psi_1$ is an expander.
\begin{lemma}\label{lem0}
The following holds with probability $1-O(n^{-0.51})$:
\begin{description}
\item[(a)] $|V_0|\leq n^{99/100}$.
\item[(b)] If $x,y\in V_\s$ then the distance between them in $G_{t_1}$
is at least 10.
\item[(c)] If $S\subseteq [n]$ and $|S|\leq n_0=\frac{n}{\log^2n}$ then
$e_{G_{t_1}}(S)\leq 10|S|$.
\item[(d)] If $S\subseteq [n]$ and $|S|=s\in [n_0'=\frac{n_0}{\log^4n},n_1=\frac{n}{10\log n}]$
then $|N_{\Psi_1}(S)|\geq s\log n/25$.
\item[(e)] No cycle of length 4 in $G_{t_1}$ contains a small vertex.
\item[(f)] No vertex of degree one in $G_{\t_1}$ is incident with an edge of $E_{t_1}\cap M_0$.
\item[(g)] The maximum degree in $G_{10n\log n}$ is less than $100\log n$. {\bf Crude but easy to verify.}
\item[(h)] $\t_1\leq 10n\log n$.
\end{description}
\end{lemma}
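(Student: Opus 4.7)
My plan is to handle the six items in parallel, since each reduces to a first-moment calculation on a dense random graph. In every case I will pass from the process model to $G_{n,p_1}$ (or $G_{n,p_2}$) via the transfer inequalities \eqref{mon}--\eqref{notmon}, express the bad event as a union indexed by a vertex, a pair, or a subset, estimate the elementary probabilities with the Chernoff bounds \eqref{chern1}--\eqref{chern2}, and finish with a union bound. Because the required failure probability $O(n^{-0.51})$ is stricter than ``w.h.p.,'' the exponents must be tracked carefully, in particular where the non-monotone transfer \eqref{notmon} inflates by a factor $n^{1/2+o(1)}$.

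Items (a) and (f) are single-vertex degree tails. For (a), $\deg_{\Psi_0}(v)$ is $\Bin(n-1,p_2)$ with mean $\mu_2\sim(\log n)/10$, and the lower-tail form of \eqref{chern2} gives $\Pr(v\in V_0)\le n^{-\Omega(1)}$; I would then either apply Markov directly or, if the margin is too tight for $n^{11/12}$, appeal to McDiarmid \eqref{mcd} applied to $|V_0|$ as a Lipschitz function of the edges of $\Psi_0$ (Lipschitz constant $2$, $t_2=O(n\log n)$ edges) to pin $|V_0|$ to within $1+o(1)$ of its mean. For (f), \eqref{chern2} with $a=10$ in $G_{n,p_1}$ gives $\Pr(\deg(v)\ge 10\log n)\le n^{-10(\log 10-1)+o(1)}$, and a union bound over $v$ closes it out.

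For (b) and (e), I would first redo the Chernoff calculation in $G_{n,p_1}$, where the mean degree is $\log n$ against the threshold $L_0=(\log n)/100$, to conclude that $|V_\s|\le n^{1/10}$ w.h.p.; the set of small vertices is extremely sparse. Then I count bad configurations by first moment. For (b), fix an ordered pair of small vertices and a putative $uv$-path of length $\ell\le 10$: the number of such vertex-labelled paths in $[n]$ is $O(n^{\ell+1})$, each exists with probability $p_1^\ell$, and the conditional probability that both endpoints lie in $V_\s$ contributes a factor $n^{-2+o(1)}$; summing over $\ell$ gives expectation $n^{-1+o(1)}$. For (e), the expected number of $4$-cycles through a fixed vertex in $G_{n,p_1}$ is $O((\log n)^3)$, and conditioning on the two cycle-edges incident to $v$ barely changes the distribution of the remaining degree, so $\E[\#\{v\in V_\s,v\in C_4\}]=n^{-\Omega(1)}$.

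Items (c) and (d) require a union bound over subsets $S$ of size $s$. For (c), $e_{G_{t_1}}(S)$ is stochastically dominated by $\Bin(\binom{s}{2},p_1)$ with mean at most $s/(2\log n)$ throughout $s\le n_0$, so \eqref{chern2} with $a=20\log n$ gives $\Pr(e(S)\ge 10s)\le(\log n)^{-\Omega(s)}$, which easily absorbs the union-bound cost $\binom{n}{s}\le(ne/s)^s$. For (d), I condition on the vertex set $S$ and observe that $|N_{\Psi_0}(S)|$ is $\Bin(n-s,1-(1-p_2)^s)$ with mean lying in $[s(\log n)/10,n/100]$ throughout $s\in[n_0',n_1]$; in every case the mean comfortably exceeds the target $s(\log n)/25$, so \eqref{chern1} delivers an exponentially small failure probability dominating $(ne/s)^s$ across the range. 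The main obstacle I anticipate is (a): the Chernoff-derived estimate for $\E|V_0|$ is close to $n^{11/12}$ because $L_0=\mu_2/10$ is not very deep into the lower tail, so pure Markov likely falls short and the concentration-boost step must be executed carefully; the other items leave substantial numerical slack.
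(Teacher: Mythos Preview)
Your overall strategy is right, and parts (c), (d), (f) go through exactly as you describe since the bad events there are monotone and \eqref{mon} applies with no loss. There are, however, two real gaps.

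\textbf{Parts (b) and (e).} The bad events here are \emph{not} monotone: adding an edge can destroy smallness of a vertex while simultaneously creating a short path or a $4$-cycle. So only the crude transfer \eqref{notmon} is available, and that costs a factor $t_1^{1/2}=n^{1/2+o(1)}$. Your first-moment bound in $G_{n,p_1}$ is of order $n^{-1+o(1)}$ (two small endpoints contribute roughly $n^{-1.9}$, the path/cycle count contributes $n\,\mathrm{polylog}$), so after the transfer you land at $n^{-1/2+o(1)}$, which is \emph{not} $O(n^{-0.51})$. The paper flags exactly this point for (b) (``We do not have room to apply \eqref{notmon} here'') and instead computes directly in $G_{t_1}$ using the ratio estimate \eqref{binom} for $\binom{N-a}{t-b}/\binom{N}{t}$. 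The same device is used for (e). You need to do likewise; the numerical margin over $0.51$ is genuinely that thin.

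\textbf{Part (a).} Your instinct that Markov alone is too weak and that a concentration inequality is required is correct, but the McDiarmid step as you describe it does not go through. You write ``Lipschitz function of the edges of $\Psi_0$, $t_2=O(n\log n)$ edges,'' but the $t_2$ edges of $G_{t_2}$ are a uniformly random subset, not independent; and if you work instead in $G_{n,p_2}$, the $\binom{n}{2}$ independent edge indicators give a denominator $\sum c_i^2\asymp n^2$, so the deviation bound is useless below scale $n$. The paper's fix is to couple $G_{t_2}$ with the multigraph generated by an i.i.d.\ sequence $x_1,\ldots,x_{2t_2}\in[n]$: now there are $2t_2=O(n\log n)$ genuinely independent coordinates, each with Lipschitz effect $1$ on the low-degree count, and McDiarmid \eqref{mcd} gives $\exp(-u^2/2t_2)$, which at $u=n^{4/7}$ is $\exp(-n^{1/7+o(1)})$. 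A small separate argument handles loops and repeated edges.
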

\proofstart
(a)
Let $V_0'$ denote the set of vertices of degree at most $L_0+1$ in $\G_{t_2}$. Then in our coupling $|V_0|\leq Z_1+2Z_2+|V_0'|$. This is because if $v\in V_0\setminus V_0'$ then it must lie in a loop or a multiple edge. Also, $v\in L_0$ might have degree $L_0+1$ in $G_{t_2}$, but might lose an edge from the deletion of $M_0$ to create $\Psi_1$.

Now, applying \eqref{chern1} with $\e=4/5$ we get
\beq{V0dash}
\prob\brac{v\in V_0'}  \leq \Pr\brac{B\brac{2t_2,\frac{1}{n}}\leq \frac{\log n}{100}+1}\leq n^{-1/99}.
\eeq
It follows, that $\expect(|V_0'|)\leq n^{98/99}$.
We now use inequality \eqref{mcd} to finish the proof. Indeed, changing
one of the $x_i$'s can change $|V_0'|$ by at most one. Hence, for any $u>0$,
$$\Pr(|V_0'|\geq \E(|V_0'|)+u)\leq \exp\set{-\frac{u^2}{4t_2}}.$$
Putting $u=n^{2/3}$ into the above and using \eqref{Z1}, \eqref{Z2} finishes the
proof of (a).

(b)
We do not have room to apply \eqref{notmon} here. We need the inequality
\beq{binom}
\frac{\binom{N-a}{t-b}}{\binom{N}{t}}\leq \bfrac{t}{N}^b\bfrac{N-t}{N-b}^{a-b}
\eeq
for $b\le a\le t\le N$. Verification of \eqref{binom} is straightforward and can be found for example in Chapter 21.1 of \cite{FK15B}. We will now and again use the notation $A\leq_b B$ in place
of $A=O(B)$ when it suits our aesthetic taste. Let $\ell_1=\frac{2\log n}{3\log\log n}$.
\begin{align*}
\Pr(\exists\ x,y) & \leq \sum_{k=2}^{11}
\binom{n }{ k} k! \sum_{\ell_1,\ell_2=0}^{L_0}\binom{n-k}{\ell_1}\binom{n-k}{\ell_2}
\frac{\binom{N-(2n+k-5)}{t_1-(k-1+\ell_1+\ell_2)}}{\binom{N}{t_1}}\\
& \leq_b \sum_{k=2}^{11}n^k \sum_{\ell_1,\ell_2=0}^{L_0}\bfrac{ne}{\ell_1}^{\ell_1}
\bfrac{ne}{\ell_2}^{\ell_2}\bfrac{t_1}{N}^{\ell_1+\ell_2+k-1}
\bfrac{N-t_1}{N-(\ell_1+\ell_2+k-1)}^{2n-(\ell_1+\ell_2-4)} \\
& \leq_b n\sum_{k=2}^{11}\log^{k-1}n \sum_{\ell_1,\ell_2=0}^{L_0}
\bfrac{3\log n}{\ell_1}^{\ell_1}\bfrac{3\log n}{\ell_2}^{\ell_2}n^{-2+o(1)}\\
&=O(n^{-0.51}).
\end{align*}

(c) We can use \eqref{mon} here with $p_1=t_1/N$. If $s = |S|$, then in $G_{n,p_1}$
where $p_1=t_1/N$ and $N=\binom{n}{2}$,
$$\prob(e_{G_{t_1}}(S) > 10|S|) \leq 3\binom{\binom{s}{2}}{10s}p_1^{10s}
\leq 3\brac{\frac{s^2e}{20s}\cdot \frac{\log n+2\log\log n}{n-1}}^{10s}\leq\bfrac{s\log n}{n}^{10s} .$$
So,
$$\Pr(\exists\ S)\leq \sum_{s=10}^{n_0}\binom{n}{s}\bfrac{s\log n}{n}^{10s}
\leq \sum_{s=10}^{n_0}\bfrac{ne}{s}^s\bfrac{s\log n}{n}^{10s}
= \sum_{s=10}^{n_0}\brac{e\bfrac {s}{n}^9\log^{10}n}^s=O(n^{-0.51}).$$
(d) For this we will only use $E_{t_2}\subseteq E(\Psi_1)$. We can use \eqref{mon} here with $p_2=t_2/N$. For $v\in V\setminus S$, $\prob(v\in N_{\Psi_1}(S))\geq 1-(1-p_2)^{s-1} \ge\frac{sp_2}{2}$ for $s\leq n_1$. Here we have $s-1$ in place of $s$ as we need to exclude the edges of $M_0$ in this calculation. So $|N_{\Psi_1}(S)|$ stochastically dominates
$\Bin(n-s, \frac{sp_2}{2})$. Now $(n-s)\frac{sp_2}{2}\sim \frac{s\log n}{20}$ and so using the Chernoff bound \eqref{chern1} with $\e\sim1/5$,
$$\Pr(|N_{\Psi_1}(S)|< s\log n/25) \leq e^{-s\log n/1501}.$$
So,
$$\Pr(\exists\ S)\leq \sum_{s=n_0'}^{n_1}\binom{n}{s}e^{-s\log n/1501}
\leq \sum_{s=n_0'}^{n_1}\brac{\frac{ne}{s}\cdot n^{-1/1501}}^s=O(n^{-0.51}).$$
(e) The expected number of such cycles is bounded by
\begin{align*}
&\binom{n}{4}\frac{3!}{2}\sum_{k=0}^{L_0}4\binom{n-4}{k}
\frac{\binom{N-n-3}{t_1-4-k}}{\binom{N}{t_1}}\\
&\leq n^4\bfrac{t_1}{N}^4\bfrac{N-t_1}{N-4}^{n-1}+n^4\sum_{k=1}^{L_0}\bfrac{ne}{k}^k
\bfrac{t_1}{N}^{k+4}\bfrac{N-t_1}{N-k-4}^{n-k-1}\\
&\leq_b \log^4n \brac{1+\sum_{k=1}^{L_0}\bfrac{e^{1+o(1)}\log n}{k}^k} n^{-1+o(1)}\\
&=O(n^{-0.51}).
\end{align*}

(f) We will first argue that if $V_1$ is the set of vertices of degree at most one in $G_{t_0}$ then
\beq{V1small}
\Pr(|V_1|\geq 2\log^4n)=O(n^{-0.51}).
\eeq
Indeed, fix a set $U\subseteq V$ of size $u$. For $v\in U$, let $d(v,V\setminus U)$ denote the number of edges incident with $v$ and $V\setminus U$. Then, the the probability $U$ is a subset of $V_1$ in $G_{n,p_0}$ is at most 
\begin{align*}
\Pr(d(v,V\setminus U)\le 1, \forall v\in U)&= ((1-p_0)^{n-u}+(n-u)p_0(1-p_0)^{n-u-1})^u\\
&< \bfrac{\log^3n}{n}^u.
\end{align*}
Hence, with $u=\log^4n$ we have 
$$\Pr(|V_1|\geq u)\leq \binom{n}{u}\bfrac{\log^3n}{n}^{u}\leq \brac{\frac{ne}{u}\cdot\frac{\log^3n}{n}}^u=o(n^{-2}).$$
We now apply \eqref{notmon} to prove the result for $G_{t_0}$. 

We now consider adding the final $\max\set{0,\t_1-t_0}$ edges. (We only know that $\Pr(\t_1\geq t_0)=1-o(1)$ and not $1-O(n^{-0.51})$ and so we do not assume that $\t_1\geq t_0$ here.) Let $\cB$ be the event that any of these edges is (i) incident with $V_1$ and (ii) lies in $M_0$. Thus,
$$\Pr(\cB)\leq O(n^{-0.51})+10n\log n\cdot\frac{4\log^4n}{n}\cdot\frac{1}{n}= O(n^{-0.51}).$$
Here the first  $O(n^{-0.51}))$ accounts for the probability that $\t_1-t_0\geq 10n\log n$ or $|V_1|\geq 2\log^4n$. Note that $\Pr(\t_1\geq 10n\log n)=o(n^{-1})$. The proof of this follows from a straigthforward estimate of the expected number of components of size at most $n/2$ at time $10n\log n$, see for example the proof of Theorem 4.1.of \cite{FK15B}. After this, each of the at most $10n\log n$ edges (see (f)) has probability $\frac{4\log^4n}{n}\cdot\frac{1}{n}$ of being in $M_0$ and being incident with $V_1$.

(g) We apply \eqref{mon} with $p=10n\log n/N$ and find that the probability of having a vertex of degree exceeding $100\log n$ is at most
\beq{10logn}
3n\binom{n-1}{100\log n}\bfrac{20\log n}{n-1}^{100\log n}\leq 3n\bfrac{e^{1+o(1)}}{5}^{100\log n}=O(n^{-10}).
\eeq

(h) $\t_1>10n\log n$ implies that $G_{10n\log n}$ contains a component of size at most $n/2$. Thus, with $p$ as in (h),
$$\Pr(\t_1>10n\log n)\leq 3\sum_{k=1}^{n/2}\binom{n}{k}(1-p)^{k(n-k)}\leq 3\sum_{k=1}^{n/2}n^ke^{-5k\log n}=o(1).$$
\proofend
\begin{remark}\label{remT}
Because $\cT$ occurs w.h.p. we have that the statements in Lemma \ref{lem0} hold with probability $1-O(n^{-0.51})$ if we condition on $\cT$ occuring. This follows from $\Pr(A\mid B)\leq \Pr(A)/\Pr(B)$. Indeed, this is also true for any of the events below that are shown to hold with this probability.
\end{remark}
Lemma \ref{lem0} implies the following:
\begin{lemma}
With probability $1-O(n^{-0.51})$,
\beq{eq1}
S\subseteq [n]\text{ and }|S|\leq n/2000\text{ implies }
|N_{\Psi_1}(S)|\geq |S|.
\eeq
\end{lemma}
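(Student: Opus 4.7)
My plan is to split the analysis by the size of $S$ into three ranges and use the strongest applicable property of Lemma~\ref{lem0} in each.

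In the medium range $n_0' \leq |S| \leq n_1$, property (d) directly gives $|N_{\Psi_0}(S)| \geq |S|\log n/25$, and since $\Psi_0 \subseteq \Psi_1$ this is much more than $|S|$. In the large range $n_1 \leq |S| \leq n/2000$ I would pass to $G_{n,p_2}$ via the transfer \eqref{mon} and apply Chernoff: for a fixed $S$ of size $s$, $|N_{G_{t_2}}(S)|$ stochastically dominates $\Bin(n-s,1-(1-p_2)^s)$, whose mean is at least $1000\,s$ throughout this range (at the lower end one has $sp_2 \geq 1/100$ so the inner probability is bounded away from $0$, and for larger $s$ the mean tends to $n/2$). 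A union bound over $s$ and over the $\binom{n}{s}$ choices of $S$ comfortably beats $n^{-0.51}$.

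For the small range $|S| \leq n_0'$ I would argue by contradiction. Supposing $|N_{\Psi_1}(S)| < |S|$, set $T = S \cup N_{\Psi_1}(S)$, so $|T| < 2|S| \leq n_0$. Writing $S_\l = S \cap V_\l$ and $S_\s = S \cap V_\s$, every $\Psi_1$-edge incident to $S$ lies in $T$, which gives
\[
2 e_{\Psi_1}(T) \;\geq\; \sum_{v \in S} d_{\Psi_1}(v) \;\geq\; L_0|S_\l| + |S_\s|,
\]
using $d_{\Psi_1}(v) \geq d_{\Psi_0}(v) > L_0$ for $v \notin V_0$, $d_{\Psi_1}(v) = d_{G_{t_1}}(v) \geq L_0$ for $v \in V_0 \cap V_\l$, and $d_{\Psi_1}(v) = d_{G_{t_1}}(v) \geq 1$ for $v \in V_\s \subseteq V_0$ (since $\d(G_{t_1}) \geq 1$). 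Property (c) gives $e_{G_{t_1}}(T) \leq 10|T| < 20|S|$, so $L_0|S_\l| + |S_\s| < 40|S|$, forcing $|S_\l| < 4000|S|/\log n$: $S$ is overwhelmingly composed of small vertices. Property (b) then shows that the $\Psi_1$-neighborhoods of distinct $v \in S_\s$ are pairwise disjoint subsets of $V_\l$, so $|N_{\Psi_1}(S_\s) \setminus S| \geq |S_\s| - |S_\l|$. Closing the remaining gap to $|S|$ requires extracting the contribution from $N_{\Psi_1}(S_\l)$: each $u \in S_\l$ has at most one neighbor in $V_\s$ by (b), hence $\geq L_0 - 1$ neighbors in $V_\l$, and property (e) caps $|N(u) \cap N(v)|$ at $1$ for $v \in V_\s$, so the overlap of $N_{\Psi_1}(u)$ with $N_{\Psi_1}(S_\s)$ is at most $|S_\s|$.

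The main obstacle is this last step of the small-range argument. The $S_\s$-bound alone falls short of $|S|$ by $2|S_\l| = O(|S|/\log n)$, and the crude edge-count from $S_\l$'s large degrees loses a factor $\Delta \leq 10 \log n$ by (f), giving only $O(|S|/\log n)$ new vertices. These two slack estimates must be combined carefully through (e) to guarantee that the contributions of $S_\s$ and $S_\l$ to $N_{\Psi_1}(S) \setminus S$ are sufficiently disjoint, and in the borderline case $|S| \approx L_0$ one may need to split further into subcases distinguishing $\sigma = 0$ from $\sigma \geq 1$.
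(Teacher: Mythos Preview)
Your medium-range argument is the same as the paper's, and your large-range argument would work, though the paper avoids a fresh Chernoff computation entirely: it simply picks $S'\subseteq S$ with $|S'|=n_1$ and applies (d) to $S'$, getting $|N(S)|\ge |N(S')|-|S|\ge (n_1\log n)/25-|S|\ge |S|$.

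The small-range argument, however, has the gap you yourself flag, and your organizing principle is what creates it. The detour through $|S_\l|<4000|S|/\log n$ is a trap: it tells you $S_\l$ is tiny, but then the expansion you can extract from $S_\l$ is correspondingly tiny, and the two effects nearly cancel---which is exactly the ``borderline'' you are worried about. The paper never proves or uses that inequality. Instead it reorganizes the whole argument: first establish the \emph{stronger} bound $|N_{\Psi_1}(S)|\ge 4|S|$ for every $S\subseteq V_\l$ with $|S|\le n/2000$ (your edge-count via (c) gives this with room to spare when $|S|\le n_0'$, and (d) plus the subset trick cover the rest). Then for arbitrary $S$, with $S_0=S\cap V_\s$ and $S_1=S\cap V_\l$, write
\[
|N(S)|\;\ge\;|N(S_0)|+|N(S_1)|-|N(S_0)\cap S_1|-|N(S_1)\cap S_0|-|N(S_0)\cap N(S_1)|.
\]
The point is that \emph{each} of the three subtracted terms is at most $|S_1|$. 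The first is trivial; the second comes from (b) (a vertex of $S_1$ with two neighbours in $S_0$ would put two small vertices at distance~$2$); and the third---the one you are circling around---follows because a vertex $u\in S_1$ admitting two distinct length-$2$ paths into $S_0$ would force either two small vertices at distance $\le 4$, contradicting (b), or a $C_4$ through a small vertex, contradicting (e). This gives the sharp bound $|N(S_0)\cap N(S_1)|\le |S_1|$ rather than the $|S_0|\cdot|S_1|$-type bound your overlap estimate produces. With $|N(S_0)|\ge|S_0|$ from (b) and minimum degree~$1$, one gets $|N(S)|\ge |S_0|+4|S_1|-3|S_1|=|S|$ in one line, with no further case analysis.
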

\proofstart
Assume that the conditions described in Lemma \ref{lem0} hold.
Let $N(S)=N_{\Psi_1}(S)$ and $e(S)=e_{\Psi_1}(S)$.
We first argue that if $S\subseteq V_\l$ and $|S|\leq n/2000$ then
\beq{largeS}
|N(S)|\geq 4|S|.
\eeq
{From} Lemma \ref{lem0}(d), we only have to concern ourselves with $|S|\leq n_0'$
or $|S|\in [n_1,n/2000]$.

If $|S|\leq n_0'$ and $T=N(S)$ then in $\Psi_1$ we have, using Lemma \ref{lem0}(g),(h), and accounting for the edges in $M_0$ being forbidden,
\beq{cof}
e(S\cup T)\geq |S|\brac{\frac{\log n}{200}-1}\text{ and }|S\cup T|\leq |S|\brac{1+100\log n}\leq n_0.
\eeq
It is important to note that to obtain \eqref{cof}
we use the fact that vertices in $V_0\setminus V_\s$
are given all their edges in $\Psi_1$.

Equation \eqref{cof} and Lemma \ref{lem0}(c)
imply that $\frac{|S|\log n}{200}\leq 10|S\cup T|$ and so \eqref{largeS}
holds with room to spare.

If $|S|\in [n_1,n/2000]$ then we choose $S'\subseteq S$ where $|S'|=n_1$ and using
Lemma \ref{lem0}(d), see that
$$|N(S)|\geq |N(S')|-|S|\geq \frac{\log n}{25}\cdot \frac{200|S|}{\log n}-|S|.$$
This yields \eqref{largeS}, again with room to spare.

Now let $S_0=S\cap V_\s$ and $S_1=S\setminus S_0$. Then we have
\beq{N(S)}
|N(S)|\geq |N(S_0)|+|N(S_1)|-|N(S_0)\cap S_1|-|N(S_1)\cap S_0|-|N(S_0)\cap N(S_1)|.
\eeq
But $|N(S_0)|\geq |S_0|$. This follows from (i) $\Psi_1$ has no isolated vertices
(follows from Lemma \ref{lem0}(f)), and (ii) Lemma \ref{lem0}(b) means that $S_0$ is an independent set and no two vertices in $S_0$ have a common neighbor. Equation \eqref{largeS} implies that $|N(S_1)|\geq 4|S_1|$. We next observe that trivially, $|N(S_0)\cap S_1|\leq |S_1|$.
Then we have
$|N(S_1)\cap S_0|\leq |S_1|$, for otherwise some vertex in $S_1$ has two neighbors
in $S_0$, contradicting Lemma \ref{lem0}(b).
Finally, we also have $|N(S_0)\cap N(S_1)|\leq |S_1|$.
If for a vertex in $S_1$ there are
two distinct paths of length two to $S_0$ then we violate one of
the conditions -- Lemma \ref{lem0}(b) or (e).

So, from \eqref{N(S)} we have
$$|N(S)|\geq |S_0|+4|S_1|-|S_1|-|S_1|-|S_1|= |S|.$$
\proofend

Next let $G=(V,E)$ be a graph with an even number of vertices that does not contain a perfect matching. Let $v$ be a vertex not covered by some maximum matching, and suppose that $M$ is a maximum matching that isolates $v$. Let $S_0(v,M)=\set{u\neq v:M\text{ isolates }u}$. If $u\in S_0(v,M)$ and $e=\set{x,y}\in M$ and $f=\set{u,x}\in E$ then {\em flipping} $e,f$ replaces $M$ by $M'=M+f-e$. Here $e$ is {\em flipped-out}. Note that $y\in S_0(v,M')$.

Now fix a maximum matching $M$ that isolates $v$ and let 
$$A(v,M)=\bigcup_{M'}S_0(v,M')$$
where we take the union over $M'$ obtained from $M$ by a sequence of flips.
\begin{lemma}\label{lem4a}
Let $G$ be a graph without a perfect matching and let $M$ be a maximum matching and $v$ be a vertex isolated by $M$. Then $|N_G(A(v,M))|<|A(v,M)|$.
\end{lemma}
\proofstart
Suppose that $x\in N_G(A(v,M))$ and that $f=\set{u,x}\in E$ where $u\in A(v,M)$. Now there exists $y$ such that $e=\set{x,y}\in M$, else $x\in S_0(v,M)\subseteq A(v,M)$. We claim that $y\in A(v,M)$ and this will prove the lemma. Since then, every neighbor of $A(v,M)$ is also a neighbor via an edge of $M$.

Suppose that $y\notin A(v,M)$. Let $M'$ be a maximum matching that (i) isolates $u$ and (ii) is obtainable from $M$ by a sequence of flips. Now $e\in M'$ because if $e$ has been flipped out then either $x$ or $y$ is placed in $A(v,M)$. But then we can do another flip with $M'$, $e$ and the edge $f=\set{u,x}$, placing $y\in A(v,M)$, contradiction.

\proofend

Define 
$$E_A=E_{t_3}\setminus E(\Psi_1)=\set{f_1,f_2,\ldots,f_\r}$$
where we see from Lemma \ref{lem0}(a),(g),(h) that with probability $1-O(n^{-0.51})$ we have
$$\z_3\geq \r \geq \z_3-100n^{99/100}\log n\sim\frac{n\log n}{20}.$$ 
\begin{lemma}\label{lem4}
Given $\Psi_1,V_0,\r$ where $|V_0|\leq n^{99/100}$, we have that $E_A$ is a uniformly random $\r$-subset of
$E_2=\binom{V_1}{2}\setminus E(\Psi_1)$, where $V_1=[n]\setminus V_0$.
\end{lemma}
\proofstart
This follows from the fact that if we remove any $f_i$ and replace it with any other edge from $E_2$ then $V_0$ is unaffected. Thus $E$ and $E-f+g$ are equally likely to be $E_A$, under our conditioning, where $f\in E$ and $g\in E_2\setminus E$. A sequence of such changes shows that any $\r$-subset of $E_2$ is equally likely to be $E_A$.
\proofend

Now consider the sequence of graphs $H_0=\Psi_1,H_1,\ldots,H_\r$ where $H_i$ is obtained from $H_{i-1}$ by adding the edge $f_i$. We claim that if $\m_i$ denotes the size of a largest matching in $H_i$ that is disjoint from $M_0$, then
\beq{match}
\Pr(\m_i= \m_{i-1}+1\mid \m_{i-1}<n/2,\,f_1,
\ldots,f_{i-1},\,(\Psi_1\text{ satisfies \eqref{eq1}}))\geq 10^{-7}.
\eeq
To see this, let $M_{i-1}$ be a matching of size $\m_{i-1}$ in $H_{i-1}$, disjoint from $M_0$, and suppose that $v$ is a vertex not covered by $M_{i-1}$. It follows from \eqref{eq1} and Lemma \ref{lem4a} that if $A_{H_{i-1}}(v)=\set{g_1,g_2,\ldots g_r}$ then $r\geq n/2000$. Now consider the pairs $(g_j,x),\,j=1,\ldots,r,\,x\in A_{H_{j-1}}(g_j)$. There are at least $\binom{n/2000}{2}$ such pairs and if $f_i$ lies in this collection, then $\m_i=\m_{i-1}+1$. Equation \eqref{match} follows from this and Lemma \ref{lem4}. In fact, given Lemma \ref{lem0}(a), the probability in question is at least
$$\frac{\binom{n/2000-n^{99/100}}{2}-\r-n/2}{\binom{n}{2}}> 10^{-7},$$
where we have subtracted $\r$ to account for some edges of $E_A$ having already been checked. And we have subtracted the size of $M_0$ too.

Now if there is no perfect matching in $H_\r$ then we will have $\m_i=\m_{i-1}+1$ at most $n/2$ times. But from \eqref{match} we see that the probability of this is bounded by $\Pr\brac{\Bin\brac{\r,10^{-7}}\leq n/2}$. It follows that
\beq{eq2}
\Pr(H_\r\text{ has no perfect matching})\leq O(n^{-0.51})+\Pr\brac{\Bin\brac{\r,10^{-7}}\leq n/2}= O(n^{-0.51}).
\eeq
So  with probability $1-O(n^{-0.51})$, $\Psi_2=H_\r$ has a perfect matching. We choose such a matching uniformly at random.

It follows by symmetry that $M_1$ is uniformly random, conditional only on being disjoint from  $M_0$. This will not be true if we condition on various quantities like $\Psi_0,V_0$ etc., but we only make an unconditional claim (except for $M_0$). We will need the following properties of the 2-factor
$$\Pi_0=M_0\cup M_1.$$
\begin{lemma}\label{lem2}
The following hold with probability $1-O(n^{-0.51})$:
\begin{description}
\item[(a)] $M_0\cup M_1$ has at most $10\log_2n$ components.
\item[(b)] There are at most $n_b=\frac{n\log\log\log n}{\log\log n}$ vertices in total in components of size at most $n_c =\frac{200n}{\log n}$.
\end{description}
\end{lemma}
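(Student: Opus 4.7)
We treat $M_1$ as a uniformly random perfect matching of $K_n$, as asserted in the remark following \eqref{eq2} (the conditioning $M_0 \cap M_1 = \emptyset$ has $\Theta(1)$ probability and is absorbed into our target). The key device is a sequential exposure of $M_1$ that turns cycle closures into independent events. Starting at the smallest $M_1$-unmatched vertex $v_0$, follow $M_0$ to $v_0'$, then reveal its $M_1$-partner (uniform over $M_1$-unmatched vertices distinct from $v_0'$), follow $M_0$ again, and so on, closing the current cycle and restarting from the next unmatched vertex whenever the revealed partner coincides with $v_0$. Let $B_j \in \{0,1\}$ indicate whether the $j$-th revealed $M_1$-edge closes a cycle. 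At the $j$-th revelation there are exactly $n-2j+1$ eligible partners and exactly one closes the current cycle, so $\Pr(B_j=1 \mid B_1,\dots,B_{j-1})=1/(n-2j+1)$; hence $B_1,\dots,B_{n/2}$ are independent Bernoullis.

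For part (a), the number of components of $M_0\cup M_1$ equals $X=\sum_{j=1}^{n/2}B_j$, with mean
$$\E[X]=\sum_{i=1}^{n/2}\frac{1}{2i-1}=\tfrac{1}{2}\log n+O(1).$$
Applying Chernoff \eqref{chern2} with $a=10\log_2 n/\E[X]\approx 29$ yields $\Pr(X\geq 10\log_2 n)\leq (e/a)^{a\E[X]}\leq n^{-30}$, easily absorbing $o(n^{-0.51})$.

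For part (b), let $X_k$ count cycles of length $2k$. A standard counting argument (using $(n-2k-1)!!/(n-1)!!$ for the probability that a uniform $M_1$ contains $k$ prescribed vertex-disjoint edges, together with $\binom{n/2}{k}(k-1)!\,2^{k-1}$ choices of a cyclic arrangement) gives $\E[X_k]=\ooi/(2k)$ uniformly for $k=O(n/\log n)$. Thus the number $Y$ of vertices in components of size at most $n_c$ has $\E[Y]=\sum_{k\leq n_c/2}2k\,\E[X_k]=\ooi\,n_c/2=\Theta(n/\log n)$, smaller than $n_b$ by a polylogarithmic factor. To upgrade Markov to $o(n^{-0.51})$, I would bound high moments of $Y$: writing $Y=\sum_v \mathbf{1}\{v\in \text{small component}\}$ and expanding, a combinatorial estimate for $\Pr(v_1,\dots,v_r\text{ all in small components})$, obtained by summing over set-partitions of $\{v_1,\dots,v_r\}$ according to which $v_i$'s share a cycle and bounding each contribution via the same counting recipe as for $\E[X_k]$, yields $\E[Y^r]\leq (C\E[Y])^r$ for an absolute constant $C$ and all $r\leq \log n$. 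Markov applied to $Y^r$ with $r=\Theta(\log n/\log\log n)$ then gives
$$\Pr(Y\geq n_b)\leq \bigl(C\E[Y]/n_b\bigr)^r=\bigl(C'\log\log n/(\log n\cdot\log\log\log n)\bigr)^r=o(n^{-0.51}).$$

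The principal obstacle is the concentration for part (b): the gap between $\E[Y]$ and $n_b$ is only polylogarithmic, so Markov, Chebyshev, and even Bernstein-type bounds (the latter give only $n^{-o(1)}$ because the maximum cycle size $n_c$ comparable to $n/\log n$ dominates the variance regime) are too weak. Establishing the bound $\E[Y^r]\leq (C\E[Y])^r$ requires careful control of both the probability that a tuple of vertices all lies in small cycles and the enumeration of induced partitions; once the independent-Bernoulli exposure is in place the remaining calculations, while somewhat intricate, are routine.
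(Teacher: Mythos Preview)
Your argument for (a) is correct and close in spirit to the paper's: both exploit the sequential exposure of $M_1$. You package it as a Feller-type coupling (independent Bernoulli cycle-closure indicators) and apply Chernoff; the paper instead observes from the same exposure that each freshly generated cycle consumes at least half the remaining vertices with probability at least $1/2$, and then bounds the probability that fewer than $\log_2 n$ of the first $10\log_2 n$ cycles are ``halving'' by $\Pr(\Bin(10\log_2 n,1/2)\leq\log_2 n)\leq 2^{-5\log_2 n}$. Either route works.

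For (b) your approach and the paper's diverge substantially, and the paper's is much simpler. You try to control $Y$ (the number of \emph{vertices} in small components) directly via high moments, and the whole argument rests on the asserted inequality $\E[Y^r]\leq(C\E[Y])^r$ for $r$ up to $\Theta(\log n/\log\log n)$. You do not prove this; the sketch ``sum over set-partitions and bound each contribution via the same counting recipe'' hides real work, because the number of set-partitions of $[r]$ is $e^{(1+o(1))r\log r}$ in this range, and one must verify that the per-partition weights decay fast enough to absorb this growth. In the compound-Poisson heuristic $Y\approx\sum_{k\le n_c/2}2kN_k$ with $N_k$ independent $\text{Pois}(1/(2k))$, the $r$-th cumulant is $\kappa_r\approx n_c^r/(2r)$, so the combinatorics is delicate rather than routine.

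The paper sidesteps all of this by bounding the \emph{number} of small cycles rather than their total size. From the same exposure as in (a), while at least $n_b/2$ vertices remain unpartitioned one has $\log\nu\sim\log n$, so each newly generated cycle has length at most $n_c$ with probability at most $201/\log n$; combined with the already established bound of $10\log_2 n$ on the total number of cycles, the count of small cycles produced in this regime is stochastically dominated by $\Bin(10\log_2 n,\,201/\log n)$. A Chernoff bound shows this exceeds $k=\frac{\log n\,\log\log\log n}{1000\log\log n}$ with probability at most $(3000e/k)^k=o(n^{-0.51})$, and then $Y\le n_b/2+k\cdot n_c\le n_b$. The idea you are missing is that counting small \emph{cycles}, rather than vertices in them, lets you recycle part (a) and reduce the problem to a single binomial tail, avoiding moment estimates entirely.
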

\proofstart
Let
$$\n(m)=\frac{(2m)!}{2^mm!}\text{ = number of perfect matchings of $K_{2m}$.}$$
We observe that if we choose $M_1$ completely independently of $M_0$, then using inclusion-exclusion we see that the probability that $M_0\cap M_1= \emptyset$ is
\beq{incexc}
\sum_{k=0}^{n/2}(-1)^k\binom{n/2}{k}\frac{\n(n/2-k)}{\n(n/2)}.
\eeq
Now for $k$ constant we see that the summand in \eqref{incexc} is asymptotically equal to $\frac{1}{2^kk!}$. Then by truncating the sum in \eqref{incexc} at a large odd integer and using the Bonferroni inequality we see that the sum in \eqref{incexc} is at least $e^{-1/2}-\d$ for any positive $\d$. We will therefore accept that $\Pr(M_0\cap M_1=\emptyset)\geq 1/3$ and then we can inflate the probabilities in \eqref{xx1}, \eqref{xx2} by 3, at most, to handle the conditioning on $M_0\cap M_1=\emptyset$.

(a) We generate a uniform random matching by choosing any unmatched vertex $v$ and pairing it with a random unmtched vertex $w$. Following the argument in \cite{FLu} we note that if  $C$ is the cycle
of $M_0\cup M_1$ that contains vertex 1 then
\beq{eq12}
\Pr(|C|=2k)< \prod_{i=1}^{k-1}\bfrac{n-2i}{n-2i+1}\frac{1}{n-2k+1}<\frac{1}{n-2k+1}.
\eeq
Indeed, consider $M_0$-edge $\set{1=i_1,i_2}\in C$ containing vertex 1. Let $\set{i_2,i_3}\in C$ be the $M_1$-edge containing $i_2$. Then $\Pr(i_3\neq 1)=\frac{n-2}{n-1}$. Assume $i_3\neq 1$ and let $\set{i_3,i_4\neq 1}\in C$ be the $M_0$ edge containing $i_3$. Let $\set{i_4,i_5}\in C$ be the $M_1$-edge containing $i_4$. Then $\Pr(i_5\neq 1)=\frac{n-4}{n-3}$ and so on.

Having chosen $C$, the remaining cycles come from the union of two (random)
matchings on the complete graph $K_{n-|C|}$. It follows from this, by summing \eqref{eq12} over $k\leq n/4$ that 
$$\Pr(|C|<n/2)\leq \sum_{k=1}^{n/4}\frac{1}{n-2k+1}\leq \frac{n}{4}\times \frac2{n}=\frac12.$$ 
Hence, from \eqref{chern1} with $\e=4/5$,
\beq{xx1}
\Pr(\neg(a))\leq \Pr(Bin(10\log_2n,1/2)\leq \log_2n)\leq 2e^{-10\log_2n/3}=O(n^{-0.51}).
\eeq

(b) It follows from \eqref{eq12} that
$$\Pr(|C|\leq n_c)\leq \frac{201}{\log n}.$$
If we generate cycle sizes as in (a) then up until there are fewer than $n_b/2$
vertices left, $\log\n\sim \log n$ where $\n$ is the number of vertices that
need to be partitioned into cycles. It follows that the probability we generate
more than $k=\frac{\log\log\log n\times \log n}{1000\log\log n}$ cycles of size
at most $n_c$ up to this time is bounded by
\beq{xx2}
O(n^{-0.51})+\Pr\brac{Bin\brac{10\log_2n,\frac{201}{\log n}}\geq
k}\leq O(n^{-0.51})+ \bfrac{3000e}{k}^k=O(n^{-0.51}).
\eeq

Thus with probability $1-O(n^{-0.51})$, we have at most
$$\frac{n_b}{2}+kn_c\leq n_b$$
vertices on cycles of length at most $n_b$.
\proofend
\subsection{Phase 2: Increasing minimum cycle length}\label{ics}
In this section, we will use the edges in
$$E_B=\set{e\in E_{t_4}\setminus E_{t_3}:e\cap V_0=\emptyset}$$
to create a 2-factor that contains $M_0$ and in which each cycle has length at least $n_c$. Note that
$$E_B\cap \Psi_1=\emptyset.$$
Note also that 
\begin{lemma}\label{lem4x}
Given $\Psi_1$ and $E_{t_3}$, $E_B$ is a uniformly random $|E_B|$-subset of
$E_3=\binom{V_1}{2}\setminus (\Psi_1\cup E_{t_3})$, where $V_1=[n]\setminus V_0$.
\end{lemma}
\proofstart
This follows from the fact that if we remove any edge of $E_B$ and replace it with any other edge from $E_3$ then $V_0$ is unaffected.
\proofend

We eliminate the small cycles (of length less than $n_c$) one by one (more or less). Let $C$ be a small cycle. We remove an edge $\set{u_0,v_0}\notin M_0$ of $C$. We then try to join $u_0,v_0$ by a sufficiently long $M_1$ alternating path $P$ that begins and ends with edges not in $M_0$. This is done in such a way that the resulting 2-factor contains $M_0$ but has at least one less small cycle. The search for $P$ is done in a breadth first manner from both ends, creating $n^{2/3+o(1)}$ paths that begin at $v_0$ and another $n^{2/3+o(1)}$ paths that end at $u_0$. We then argue that with sufficient probability, we can find a pair of paths that can be joined by an edge from $E_B$ to create the required alternating path.

We proceed to a detailed description. Let
$$V_\t=\set{v\in [n]\setminus V_0:\deg_{E_B}(v)\leq L_0},$$
where for a set of edges $X$ and a vertex $x$,
$\deg_X(x)$ is the number of edges in $X$ that are
incident with $x$.

\begin{lemma}\label{lem3}
The following hold with probability $1-O(n^{-0.51})$:
\begin{description}
\item[(a)] $|V_\t|\leq n^{2/5}$.
\item[(b)] No vertex has 10 or more $G_{t_1}$ neighbors in $V_\t$.
\item[(c)]
If $C$ is a cycle with $|C|\leq n_c$ then $|C\cap V_\t|\leq |C|/200$ in $G_{t_1}$.
\end{description}
\end{lemma}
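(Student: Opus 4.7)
The plan is to condition on the $1-o(n^{-0.51})$ event of Lemma \ref{lem0}, which in particular gives $|V_0|\le n^{11/12}$ and $\Delta(G_{t_1})\le 10\log n$. By the same argument as Lemma \ref{lem4}, conditional on $V_0$ and $\r=|E_B|$ (with $\r\approx 7t_0/10$), $E_B$ is a uniformly random $\r$-subset of $\binom{W}{2}$, $W=[n]\setminus V_0$. In this model, for $v\in W$, $\deg_{E_B}(v)$ is essentially $\Bin(|W|-1,p')$ with $p'\approx 7\log n/(10n)$, giving $\E\deg_{E_B}(v)\sim 7\log n/10$, a factor of $70$ above the threshold $L_0=\log n/100$.

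For (a): a Chernoff bound at ratio $a=1/70$ yields $\Pr(v\in V_\t)\le n^{-\a+o(1)}$ with $\a=(7/10)(1-a+a\log a)>0.64$, so $\E|V_\t|\le n^{1-\a+o(1)}\ll n^{2/5}$. The main obstacle is concentration: the available gap $n^{2/5-(1-\a)}\approx n^{0.047}$ leaves Chebyshev short of $o(n^{-0.51})$ by about $n^{0.06}$, so I would instead pass to a fourth-moment method. Conditioning on whether $\{u,v\}\in E_B$ shows that the covariance of $\mathbf{1}[v\in V_\t]$ and $\mathbf{1}[u\in V_\t]$ is $O(p'\cdot n^{-2\a})$, and an analogous accounting of $3$- and $4$-point joint probabilities gives $\E[(|V_\t|-\E|V_\t|)^4]=O((\E|V_\t|)^2)$. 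Markov applied to the fourth moment then yields $\Pr(|V_\t|>n^{2/5})=O(n^{-0.89+o(1)})=o(n^{-0.51})$.

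Parts (b) and (c) are straightforward first-moment (union-bound) calculations. For (b), I union-bound over $v\in[n]$ and $10$-subsets $S\subseteq[n]\setminus\{v\}$: the edges $\{v,u\}$, $u\in S$, lie in $G_{t_1}$ with joint probability $\le p_1^{10}\approx(\log n/n)^{10}$, and a Chernoff bound on the number of $E_B$-edges incident to $S$ (mean $\sim 7\log n$, forced $\le 10L_0=\log n/10$) gives $\Pr(S\subseteq V_\t)\le n^{-10\a+o(1)}$ even conditional on those edges being present (conditioning on the edge events only raises $\deg_{E_B}$ of vertices in $S$). The resulting expected count is $n\binom{n}{10}(\log n/n)^{10}n^{-10\a+o(1)}=n^{1-10\a+o(1)}\ll n^{-0.51}$. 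For (c), I union-bound over cycles $C\subseteq G_{t_1}$ of length $k\in[3,n_c]$ and subsets $T\subseteq V(C)$ with $|T|=\lfloor k/200\rfloor+1$ forced into $V_\t$: the expected number of $k$-cycles is $O((np_1)^k/k)=O(\log^k n/k)$, the number of choices of $T$ is at most $\binom{k}{\lfloor k/200\rfloor+1}\le(200e)^{k/200}$, and $\Pr(T\subseteq V_\t)\le n^{-\a(\lfloor k/200\rfloor+1)+o(1)}$. The terms with $k<200$ contribute $O(\log^{200}n\cdot n^{-\a})=o(n^{-0.51})$; for $k\ge 200$, the per-$k$ ratio $\log n\cdot(200e)^{1/200}\cdot n^{-\a/200}$ is $<1$ for large $n$, giving a geometric tail that is absorbed by the same bound.
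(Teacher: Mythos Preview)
Your treatment of (b) and (c) matches the paper's: both are first-moment/union-bound arguments with the tail probability $\Pr(w\in V_\t)\le n^{-\a+o(1)}$ (the paper works with the bound $n^{-1/2}$ in (c), but the underlying estimate is the same). One small point on (b): the claim that ``conditioning on the edge events only raises $\deg_{E_B}$ of vertices in $S$'' needs care, since the events $\{v,u\}\in E(G_{t_1})$ live in the full process and interact with $E_B$ through the random arrival time of $\{v,u\}$. The paper sidesteps this by counting only $E_B$-neighbours of $w\in S$ in $[n]\setminus(S\cup\{v\})$; then the relevant edges are disjoint from those witnessing $\cA(v,S)$, and (near-)independence is immediate. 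You should make the same restriction.

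For (a) your route genuinely diverges from the paper's. The paper invokes the random-sequence model and McDiarmid exactly as in Lemma~\ref{lem0}(a): generate $E_B$ via a sequence in $[n-|V_0|]^{2|E_B|}$ and use bounded differences. That is quick, but note what it actually yields: with $2|E_B|\sim 7t_0/5\asymp n\log n$ coordinates and Lipschitz constant $O(1)$, McDiarmid controls deviations only down to order $\sqrt{n\log^2n}$, so the conclusion one obtains this way is $|V_\t|\le n^{1/2+o(1)}$ rather than the stated $n^{2/5}$. (This is harmless for the application, since only $|V_\t|=O(n^{11/12})$ is ever used when bounding $|W|$.) Your fourth-moment argument, by contrast, does reach $n^{2/5}$: with $\E|V_\t|\le n^{1-\a+o(1)}$, $\a>0.64$, and $\E[(|V_\t|-\E|V_\t|)^4]=O((\E|V_\t|)^2)$, Markov gives $\Pr(|V_\t|>n^{2/5})=O(n^{2(1-\a)-8/5+o(1)})=o(n^{-0.51})$. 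So your approach is more laborious but actually establishes the lemma as stated; the paper's is shorter but, read literally, delivers a weaker (though sufficient) bound. If you keep your version, do spell out the fourth-moment expansion: the off-diagonal terms need the covariance bound you wrote, and the three- and four-index joint cumulants need the analogous ``condition on shared edges'' decoupling to show they contribute $o((\E|V_\t|)^2)$.
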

\proofstart

(a) Let $p=\frac{|E_B|}{|E_3|}\approx \frac{7\log n}{n}$, assuming that $|V_0|=o(n)$. Suppose we replace $E_B$ by a subset $X\subseteq E_3$ with edges included independently with probability $p$. Fix a set $U\subseteq V_1=V\setminus V_0$ of size $\m$. For $v\in U$, now let $d(v,V_1\setminus U)$ denote the number of edges in $X$ incident with $v$ and $V_1\setminus U$. Then, if $n_1=|V_1|=n-o(n)$, 
\begin{align*}
\Pr(d(v,V_1\setminus U)\le L_0, \forall v\in U)&= \brac{\sum_{i=0}^{L_0} \binom{n_1}{i}p^i(1-p)^{n_1-i}}^\m\\
&=(n^{-7/10+(\log 100)/100+o(1)})^\m<n^{-13\m/20}.
\end{align*}
Hence, applying \eqref{notmon}, we have with $\m=n^{2/5}$,
$$\Pr(|V_\t|\geq \m)\leq O(n^{1/2+o(1)})\binom{n}{\m}n^{-13\m/20}\leq O(n^{1/2+o(1)})\brac{\frac{ne}{\m}\cdot n^{-13/20}}^\m=o(n^{-1}).$$

(b) This time we can condition on $\n=n-|V_0|$ and $\m=|\set{e\in E_{t_4}\setminus E_{t_3} :e\cap V_0\neq \emptyset}|\leq n^{99/100}\times 10\log n$. We write $$\Pr(v\text{ violates (b)})\leq
\sum_{S\in\binom{[n-1]}{10}}\Pr(\cA(v,S))\Pr(\cB(v,S)\mid \cA(v,S))
$$
where
\begin{align*}
\cA(v,S)&=\set{N(v)\supseteq S,
\text{ in }G_{t_1}},\\
\cB(v,S)&=\set{w\text{ has at most $L_0$ $E_B$-neighbors in }[n]\setminus
(S\cup\set{v}),\forall w\in S}.
\end{align*}
Applying \eqref{mon} we see that $\Pr(\cA(v,S))\leq 3p_1^{10}$ and then using \eqref{mon} with
\beq{p}
p=\frac{t_4-t_3-\m}{\binom{\n}{2}}\sim \frac{7\log n}{10n}
\eeq
we see that
$$\Pr(\cB(v,S)\mid \cA(v,S))\leq 3\brac{\sum_{k=0}^{L_0}
\binom{\n-11}{k}p^k(1-p)^{\n-11-k}}^{10}$$
and so
\begin{align*}
\prob(v\text{ violates (b)})
& \leq_b \binom{n}{10}p_1^{10}\brac{\sum_{k=0}^{L_0}
\binom{\n-11}{k}p^k(1-p)^{\n-11-k}}^{10}\\
& \leq  (e^{o(1)}\log n\cdot n^{1/10-7/10+o(1)})^{10}\\
&=o(n^{-5}).
\end{align*}
Now use the Markov inequality.

(c) Let $Z$ denote the number of cycles violating the required property. Using \eqref{mon} and $\n$ as in (b) and $p$ as in \eqref{p}, we have
\begin{align}
\expect(Z) & \leq_b\sum_{k=3}^{n_c} \binom{n }{ k}k!p_1^k
\binom{k}{\rdup{\frac{k}{200}}}\brac{\sum_{\ell=0}^{L_0}
\binom{\n-k}{\ell}p^\ell(1-p)^{\n-\ell}}^{\rdup{k/200}}\label{p1p}\\
& \leq \sum_{k=3}^{n_c} (2n)^k \bfrac{\log n+2\log\log n}{n-1}^k n^{-3\rdup{k/200}/5}\\
& = O(n^{-0.51}).
\end{align}
\proofend
Let $\cE_0$ denote the intersection of the high probability events of Lemmas \ref{lem0} and \ref{lem3}. 
\begin{lemma}\label{lem5}
Let $V_1=[n]\setminus V_0$ and let  $|E_B|=\m=\a n\log n,\a=O(1)$ and $|V_1|=\n\geq n-n^{99/100}$.

(a)  If $A\subseteq\binom{V_1}{2}$ with $|A|=a=o(n^{1/2})$ and $X$ is a subset of $\binom{V_1}{2}$ with $|X|=O(n^{99/100}\log n)$ and $A\cap X=\emptyset$, then
\begin{align}
\Pr(E_B\supseteq A\mid \cE_0, X\subseteq E_B)
&=\frac{\binom{\binom{\n}{2}-a-|X|}{\m-a-|X|}}{\binom{\binom{\n}{2}-|X|}{\m-|X|}}
\label{eq5}\\
&=(1+o(1))\bfrac{2\a\log n}{n}^a.\label{eq6}
\end{align}

(b) $A\subseteq\binom{V_1}{2}$ with $|A|=a=o(n^2)$ then
\begin{align}
\Pr(E_B\cap A=\emptyset\mid \cE_0)
&=\frac{\binom{\binom{\n}{2}-a}{\m}}{\binom{\binom{\n}{2}}{\m}}
\label{eq5x}\\
&\leq \exp\set{-\frac{a\m}{\n^2}}.\label{eq6x}
\end{align}
\end{lemma}
\proofstart
(a) Equation \eqref{eq5} follows from Lemma \ref{lem4x}. For equation \eqref{eq6}, we write
$$\frac{\binom{\binom{\n}{2}-a-|X|}{\m-a-|X|}}{\binom{\binom{\n}{2}-|X|}{\m-|X|}}= \bfrac{\m-|X|}{\binom{\n}{2}-|X|}^a\brac{1+O\bfrac{a^2}{\m-|X|}}=\bfrac{\m}{\binom{\n}{2}}^a\brac{1+O\bfrac{a^2}{\m-|X|}+O\bfrac{a|X|}{\m}}.$$
This follows from the fact that in general, if $s^2=o(N)$ then
$$\frac{\binom{N-s}{M-s}}{\binom{N}{M}}=\bfrac{M}{N}^s\brac{1+O\bfrac{s^2}{M}}.$$
(b) Equation \eqref{eq5x} follows as for \eqref{eq5}, and \eqref{eq6x} follows from
$$\frac{\binom{\binom{\n}{2}-a}{\m}}{\binom{\binom{\n}{2}}{\m}}= \prod_{i=0}^{a-1}\frac{\binom{\n}{2}-\m-i}{\binom{\n}{2}-i}.$$
\proofend
By construction, we can apply this lemma to the graph induced by $E_B$ with
\beq{defalpha}
\a\approx\frac{t_4-t_3}{2n\log n}\approx \frac{7}{20}.
\eeq
Let a cycle $C$ of $\Pi_0$ be \emph{small} if its length $|C| < n_c$ and \emph{large} otherwise. Define a near 2-factor to be a graph that is obtained from a 2-factor by removing one edge. A near 2-factor $\G$ consists of a path $P(\G)$ and a collection of vertex disjoint cycles. A 2-factor or a near 2-factor is {\em proper} if it contains $M_0$. We abbreviate proper near 2-factor to PN2F.

\medskip

We will describe a process of eliminating small cycles. In this process we create intermediate proper 2-factors. Let $\G_0$ be a 2-factor and suppose that it contains a small cycle $C$. To begin the elimination of $C$ we choose an arbitrary edge $\set{u_0,v_0}$ in $C\setminus M_0$, where $u_0,v_0\notin V_\t$. This is always possible, since $M_0\cup M_1$ is the union of disjoint cycles of length at least three and because of Lemma \ref{lem3}(c). We delete it, obtaining a PN2F $\G_1$. Here, $P(\G_1) \in \pee(v_0,u_0)$, the set of  $M_1$-alternating paths in $G$ from $v_0$ to $u_0$. Here an  $M_1$-alternating path must begin and end with an edge of $M_1$. The initial goal will be to create a large set of PN2Fs such that each $\G$ in this set has path $P(\G)$ of length at least $n_c$ and the small cycles of $\G$ are a strict subset of the small cycles of $\G_0$. Then we will show that with probability $1-O(n^{-0.51})$, the endpoints of one of the paths in some such $\G$ can be joined by an edge to create a proper 2-factor with at least one fewer small cycle than $\G_0$.

\medskip

This process can be divided into two stages. In a generic step of Stage 1, we take a PN2F $\G$ as above with $P(\G) \in \pee(u_0, v)$ and construct a new PN2F with the same starting point $u_0$ for its path. We do this by considering edges from $E_B$ incident to $v$. Suppose $\set{v,w} \in E_B$ and that the non-$M_0$ edge in $\G$ containing vertex $w$ is $\set{w,x}$. Then $\G' = \G \cup \set{v,w} \setminus \set{w,x}$ is a PN2F with $P(\G') \in \pee(u_0, x)$. We say that $\set{v,w}$ is \emph{acceptable} if 
\begin{enumerate}[(i)]
\item $x,w\notin W$ ($W$ defined immediately below). 
\item $P(\G')$ has length at least $n_c$ and any new cycle created (in $\G'$ but not $\G$) has at least $n_c$ edges.
\end{enumerate}
\medskip

There is an unlikely technicality to be faced. If $\G$ has no non-$M_0$ edge $(x,w)$, then $w= u_0$ and this is accepted if $P(\G')$ has at least $n_c$ edges and it ends the round. When  $P(\G')$ has fewer edges we lose one out of $L_0=\Omega(\log n)$ possible branching choices and this is inconsequential. It is also unlikely, having probability $O(|E_B|/\binom{n}{2})=O(\log n/n)$. We refer to this as event $\cC$ and we remark on it in the proof of Lemma \ref{lem6} below.
\medskip

In addition we define a set $W$ of \emph{used} vertices, where
$$W= V_\t\text{ at the beginning of Phase 2},$$ 
and whenever we look at edges $\set{v,w},\set{w,x}$ (that is, consider using that edge to create a new $\G'$), we add $v,w,x$ to $W$. Additionally, we maintain $|W|=O(n^{99/100})$, or fail if we cannot. Note also that $W$ accumulates as we remove short cycles.

\medskip

We will build a tree $T$ of PN2Fs, breadth-first, where each non-leaf vertex $\G$ yields PN2F children $\G'$ as above. When we stop building $T$ we will have $\nu_L=n^{2/3+o(1)}$ leaves, see \eqref{eq8}. This will end Stage 1 for the current cycle $C$ being removed.

\medskip

We will restrict the set of PN2F's which could be children of $\G$ in $T$ as follows: We restrict our attention to $w\notin W$ with $\set{v,w}\in E_B$ and $\set{v,w}$ acceptable as defined above. Also, we only construct children from the first $\ell_0=L_0/2$ acceptable $\set{v,w}$'s at a vertex $v$. Furthermore we only build the tree down to $\ell_1=\frac{2\log n}{3\log\log n}$ levels. We denote the nodes in the $i$th level of the tree by $S_i$. Thus $S_0=\set{\G_1}$ and $S_{i+1}$ consists of the PN2F's that are obtained from $S_i$ using acceptable edges. In this way we define a tree of PN2F's with root $\G_1$ that has branching factor at most $\ell_0$. Thus,
\beq{eq8}
|S_{\ell_1}|\leq \n_L=\ell_0^{\ell_1}.
\eeq
Now augment $\cE_0$ with the properties claimed in Lemma \ref{lem2}. Then, 
\begin{lemma}\label{lem6}
Conditional on the event $\cE_0$,
$$|S_{\ell_1}| =\n_L$$
with probability $1-o(n^{-3})$.
\end{lemma}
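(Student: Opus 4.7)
The plan is to proceed by induction on the BFS level $i$, showing that for each $i = 0, 1, \ldots, \ell_1-1$, with conditional probability $1 - o(n^{-3}/\ell_1)$ every PN2F in $S_i$ yields exactly $\ell_0$ children in $S_{i+1}$; a union bound over $i$ then gives the lemma. Fix a PN2F $\Gamma$ with $P(\Gamma) \in \pee(u_0, v)$ encountered during the process. It suffices to show that $v$ has at least $\ell_0$ acceptable $E_B$-edges, except with probability $o(n^{-3}/\nu_L)$, after which a second union bound over the $O(\nu_L)$ vertices of $T$ finishes the argument.

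The first observation is that $v \notin V_\tau$. For the root $\Gamma_1$ this was arranged when choosing $u_0, v_0$ via Lemma~\ref{lem3}(c); otherwise $v$ arose as the $x$-vertex of an accepted edge at $\Gamma$'s parent, and acceptability required $x \notin W \supseteq V_\tau$. By definition of $V_\tau$ we then have $\deg_{E_B}(v) > L_0 = 2\ell_0$. An $E_B$-neighbor $w$ of $v$ is \emph{bad} if either (i) $w \in W$ or the non-$M_0$ partner $x(w)$ of $w$ in $\Gamma$ lies in $W$, or (ii) the swap produces a path of length $< n_c$ or creates a new cycle of length $< n_c$. For (i): initially $|W| = |V_\s \cup V_\tau| = O(n^{11/12})$ by Lemmas~\ref{lem0}(a) and \ref{lem3}(a); every check adds at most three vertices to $W$, and the total number of checks during the whole tree-building is at most $|T| \cdot 10 \log n = n^{2/3+o(1)}$, so $|W|$ stays $O(n^{11/12})$ throughout, and since $w \mapsto x(w)$ is an involution at most $2|W|$ vertices are $W$-forbidden. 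For (ii): if $w$ lies on a cycle of $\Gamma$, that cycle is absorbed into the new path and no cycle is created, so the only bad positions are $w$ on $P(\Gamma)$ within $n_c$ of either endpoint, together with $w$ on a small cycle of the initial 2-factor (relevant only at the very first step where $|P(\Gamma_1)| < n_c$); by Lemma~\ref{lem2}(b) this contributes at most $2n_c + n_b = O(n \log\log\log n / \log\log n)$ forbidden vertices.

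By Lemma~\ref{lem5}, conditional on $\cE_0$ and the $E_B$-edges revealed before $v$ is visited, the $E_B$-neighbors of $v$ are approximately uniform over $V_1 \setminus \{v\}$. Hence $|N_{E_B}(v) \cap W|$ is stochastically dominated by $\Bin(10 \log n, O(n^{-1/12}))$ and the count of neighbors triggering (ii) is dominated by $\Bin(10 \log n, O(\log\log\log n / \log\log n))$. A routine application of \eqref{chern2} bounds each by $\ell_0/3$ except with probability $n^{-\omega(1)}$, so at least $L_0 - 2\ell_0/3 > \ell_0$ acceptable $w$ remain, which, combined with the union bound, gives the claim. The main obstacle is the careful conditioning: $W$ is defined dynamically by the algorithm itself, so we must appeal to the principle of deferred decisions to keep the unrevealed portion of $E_B$ close to uniform on the unexplored pairs at each step, and we crucially exploit that the $n^{2/3+o(1)}$ size bound on $T$ keeps $|W| = O(n^{11/12})$ throughout the construction so that the estimates above hold uniformly in the history.
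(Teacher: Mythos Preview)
Your proof is correct and follows essentially the same approach as the paper's. Both arguments note that $v\notin V_\tau$ (hence $\deg_{E_B}(v)\ge L_0$), classify the ways an $E_B$-edge $\{v,w\}$ can be unacceptable into a few cases controlled respectively by $|W|=O(n^{11/12})$, by $O(n_c)$ path positions near an endpoint, and by the $n_b$ vertices on small cycles from Lemma~\ref{lem2}(b), and then invoke Lemma~\ref{lem5} to show that only $o(L_0)$ neighbours can be bad with probability $n^{-\omega(1)}$, after which a union bound over the $O(\nu_L)$ nodes of the tree yields the $o(n^{-3})$ failure probability. Your treatment of the dynamic growth of $W$ and the deferred-decisions conditioning is slightly more explicit than the paper's, but the substance is the same.
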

\proofstart
If $P(\G)$ has endpoints $u_0,v$ and $e=\set{v,w}\in E_B$ and $e$ is unacceptable then
(i) $w$ lies on $P(\G)$ and is within distance $n_c$ of an endpoint or (ii) $x\in W$ or $w\in W$ or (iii) $w$ lies on a small cycle or (iv) $w\in V_\t$. Ab initio, there are at least $L_0$ choices for $w$ and we must bound the number of unacceptable choices.

The probability that at least $L_0/10$ vertices are unacceptable due to (iii) is by Lemmas \ref{lem2} and \ref{lem5}(a) at most
\begin{multline}\label{eq7}
(1+o(1))\binom{n_b}{L_0/10}\bfrac{7\log n}{(10+o(1))n}^{L_0/10}\leq
\bfrac{9en_b\log n}{L_0n}^{L_0/10}\\
\leq \bfrac{900e\log\log\log n}{\log\log n}^{L_0/10}=O(n^{-K})
\end{multline}
for any constant $K>0$. In our application of Lemma \ref{lem5}, $X$ is the set of $E_B$-edges incident with $W$ and $A$ is a possible set of $E_B$-edges incident with $v$.

A similar argument deals with conditions (i) and (ii). Lemma \ref{lem3}(b) means that (iv) only requires us to subtract 10.
\medskip

Thus, with (conditional) probability $1-o(n^{-4})$, 
\begin{multline}\label{logn/100}
\text{each vertex of $T$ is incident with at least $\frac{\log n}{100}-\frac{3\log n}{1000}-10-1$ acceptable edges}\\
\text{ and so $|S_{t+1}|\geq \frac{\log n}{200} |S_t|$,}
\end{multline}
for all $t$. (The -1 accounts for the possible occurrence of the event $\cC$). So with (conditional) probability $1-o(n^{-3})$ we have
\beq{eq9}
|S_{\ell_1}| = \n_L
\eeq
as desired. (This assumes that $|W|$ remains $O(n^{99/100})$, see Remark \ref{rem00} below.)
\proofend

Having built $T$, if we have not already made a cycle, we have a tree of PN2Fs and the last level, $\ell_1$ has leaves $\G_i, \ i=1,...,\n_L$, each with a path $P(\G_i)$ of length at least $n_c$. (Recall the definition of an acceptable edge.) Now, perform a second stage which will be like executing $\n_L$-many \emph{Stage 1}'s {\em in parallel} by constructing trees $T_i, \ i=1,...,\n_L $ each of depth $\ell_1$, where the root of $T_i$ is $\G_i$. Suppose for each $i$, $P(\G_i) \in \pee(u_0,v_i)$; we fix the vertex $v_i$ and build paths by first looking at neighbors of $u_0$, for all $i$ (so in tree $T_i$, every $\G$ will have path $P(\G) \in \pee(u,v_i)$ for some $u$).

\medskip

Construct these $\n_L$ trees in Stage 2 by only enforcing the conditions that $x,w \notin W$. This change will allow the PN2Fs to have small paths and cycles. We will not impose a bound on the branching factor either. As a result of this and the fact that each tree $T_i$ begins by considering edges from $E_B$ incident to $u_0$, the sets of endpoints of paths (that are not the $v_is$) of PN2Fs at the same level are the same in each of the trees $T_i,i=1,2,\ldots,\n_L$. That is, for every pair $1\le i<j\le \nu_L$, if $\G_i$ is a node at level $\ell$ of tree $T_i$ and  $P(\G_i) \in \pee(w, v_i)$ for some $w\notin V_\t$ then there exists a node $\G_j$ at level $\ell$ of tree $T_j$, such that $P(\G_j) \in \pee(w, v_j)$. This can be proved by induction, see \cite{CF}. Indeed, let $L_{i,\ell }$ denote the set of end vertices, other than $v_i$, of the paths associated with the nodes at depth $\ell$ of the tree $T_i$, $i=1,2\ldots ,\n_L, \ell = 0,1,\ldots,\ell_1$. Thus $L_{i,0}=\{ u_0\}$ for all $i$. We can see inductively that $L_{i,\ell}=L_{j,\ell}$ for all $i,j,\ell$. In fact if $v\in L_{i,\ell}=L_{j,\ell}$ then $\set{v,w}\in E_B$  is acceptable for some $i$ means that $w\notin W$ (at the start of the construction of level $\ell+1$) and hence if $\set{w,x}$ is the non-$M_0$ edge for this $i$ then $x\notin W$ and it is the non-$M_0$ edge for all $j$. In which case $\set{v,w}$ is acceptable for all $i$ and we have $L_{i,\ell +1}=L_{1,\ell +1}$.

\medskip
The set of trees $T_i, i=1,...,\nu_L$, will be succesfully constructed (i.e. have exactly $\n_L$ leaves) with probability $1-o(1/n^3)$ and with a similar probability the number of nodes in each tree is at most $(100\log n)^{\ell_1}=n^{2/3+o(1)}$. Here we use the fact that the maximum degree in $G_{t_1}\leq 100\log n$ with this probability, see \eqref{10logn}. However, some of the trees may use unacceptable edges, and so we will ``prune'' the trees by disallowing any node $\G$ that was constructed in violation of any of those conditions. Call tree $T_i$ GOOD if it still has at least $\n_L$ leaves remaining after pruning and BAD otherwise. Notice that
$$\Pr(\exists\ i:T_i \text{ is BAD}\mid\cE_0) = o\bfrac{\n_L}{n^3}=o(n^{-2}).$$
Here the $o(1/n^3)$ factor is the one promised in Lemma \ref{lem6}.

Finally, consider the probability that there is no $E_B$ edge from any of the $n^{2/3+o(1)}$ endpoints found in Stage 1 to any of the $n^{2/3+o(1)}$ endpoints found in Stage 2. At this point we will have only exposed the $E_B$-edges of $\Pi_0$ incident with these endpoints. So if for some $k\leq \n_L$ we examine the (at least) $\log n/100$ edges incident to $v_1,v_2,\ldots,v_k$, then from Lemma \ref{lem5}(b), with $X$ equal to the $E_B$-edges incident with $W$ and $A$ equal to the set of pairs $(v_i,w),i\leq k$ where $w$ is a leaf of some $T_i,1\leq i\leq \n_L$, we see that the probability we fail to close a cycle and produce a proper 2-factor is at most
$$\exp\set{-\frac{k\times n^{2/3+o(1)}\time \a n\log n}{\binom{\n}{2}}}.$$
Thus taking $k=n^{1/3+o(1)}$ suffices to make the failure probability $o(n^{-2})$. (If we have $n^{\g}$ endpoints here, then we need $k$ to be $\omega(n^{1-\g})$.)  Also, this final part of the construction only contributes $n^{1/3+o(1)}$ to $W$, viz. $v_1,v_2,\ldots,v_k$ and $O(k\log n)$ of their neighbors. Our choice of $k=n^{1/3+o(1)}$ and $n^{2/3+o(1)}$ for tree size makes this probability small and controls the size of $W$. There are other choices, this is just one of them.

Therefore, the probability that we fail to eliminate a particular small cycle $C$ is $o(n^{-2})$ and then given $\cE_0$, the probability that Phase 2 fails is $o(\log n/n^2)=o(1)$.
\begin{remark}\label{rem00}
We should check now that w.h.p. $|W|=O(n^{99/100})$ throughout Phase 2. It starts out with at most $n^{99/100}+n^{2/5}$ vertices (see Lemmas \ref{lem0}(a) and \ref{lem3}(a)) and we add $O(n^{2/3+o(1)}\times \log n)$ vertices altogether in this phase.
\end{remark}
So we conclude:
\begin{lemma}\label{lem7}
The probability that Phase 2 fails to produce a proper 2-factor with
minimum cycle length at least $n_c$ is $O(n^{-0.51})$.
\end{lemma}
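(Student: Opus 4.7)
The plan is to assemble the pieces developed throughout Phase 2 and take a union bound over the small cycles of $\Pi_0$. First I would condition on the intersection $\cE_0$ of the high-probability events of Lemmas \ref{lem0}, \ref{lem2}, \ref{lem3}, together with the event that $\Psi_2$ contains a perfect matching $M_1$ disjoint from $M_0$; by the arguments preceding Phase 2 (culminating in \eqref{eq2}) this global event has probability $1-o(n^{-0.51})$. Under this conditioning, $\Pi_0=M_0\cup M_1$ has at most $10\log_2 n$ components by Lemma \ref{lem2}(a), and hence at most that many small cycles to be removed.

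For a single small cycle $C$, I would carry out the stage-1/stage-2 procedure exactly as described. Lemma \ref{lem3}(c) guarantees the existence of an initial non-$M_0$ edge $\{u_0,v_0\}$ of $C$ with $u_0,v_0\notin V_\t$, so the iteration can begin. Stage 1 produces, by Lemma \ref{lem6}, a collection of $\n_L=n^{2/3+o(1)}$ PN2F leaves $\G_1,\ldots,\G_{\n_L}$ with path endpoints $v_1,\ldots,v_{\n_L}$, with conditional failure probability $o(n^{-3})$. Stage 2 grows $\n_L$ parallel trees $T_i$ whose level sets $L_{i,\ell}$ are identical across $i$; the displayed argument preceding Lemma \ref{lem7} bounds the probability that some $T_i$ is BAD by $o(\n_L/n^3)=o(n^{-2})$, so conditionally every $T_i$ is GOOD and the common level-$\ell_1$ endpoint set yields the asserted $n^{2/3-o(1)}$ candidates $w$ to be joined to some $v_i$.

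I would now expose only the $E_B$-edges incident to the Stage 1 endpoints, which have been held in reserve throughout the construction. Lemma \ref{lem5} applies with already-exposed set $X$ of size $O(n^{11/12}\log n)$, yielding that each $v_j\notin W$ retains at least $\log n/200$ available $E_B$-edges into $V_1\setminus W$, and each such edge lands in the Stage 2 endpoint set with probability $\Theta(n^{-1/3+o(1)})$. Scanning $k=n^{1/3+o(1)}$ Stage 1 endpoints then gives failure probability $(1-n^{-1/3+o(1)})^{k\log n/200}=o(n^{-2})$, after which the discovered $E_B$-edge completes a proper 2-factor with one fewer small cycle. The contribution to $W$ per iteration is $O(n^{2/3+o(1)}\log n)$, so after all $O(\log n)$ iterations we still have $|W|=o(n^{11/12})$, preserving the hypothesis needed for Lemma \ref{lem5}.

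Summing the per-cycle failure probabilities over the at most $10\log_2 n$ small cycles yields overall failure probability $O(\log n)\cdot o(n^{-2})=o(n^{-0.51})$, which together with the conditioning cost establishes the lemma. The main technical obstacle is the layered conditioning: by the time the final $E_B$-exposure step is reached we have already conditioned on $\cE_0$, on the outcome of Stage 1, and on the Stage 2 trees and their pruning. Each such conditioning can in principle inflate probabilities by the crude factor $\Pr(A\mid B)\le \Pr(A)/\Pr(B)$, and it is precisely to absorb the resulting loss that Lemmas \ref{lem0}, \ref{lem2}, \ref{lem3}, \ref{lem5}, \ref{lem6} are proved to the sharper strengths $1-o(n^{-0.51})$ and $1-o(n^{-3})$ rather than merely $1-o(1)$.
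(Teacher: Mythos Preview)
Your proposal is correct and follows essentially the same route as the paper: condition on the event $\cE_0$ (cost $o(n^{-0.51})$), bound the per-cycle failure probability by $o(n^{-2})$ via Lemma~\ref{lem6}, the BAD-tree estimate, and the final $E_B$-exposure computation, then take a union bound over the at most $10\log_2 n$ small cycles, while checking that $|W|$ stays $O(n^{11/12})$ throughout. Your closing remark about why the auxiliary lemmas are proved to strength $1-o(n^{-0.51})$ rather than $1-o(1)$ is exactly the point the paper makes before Lemma~\ref{lem0}.
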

\proofend


\subsection{Phase 3: Creating a Hamilton cycle}\label{CHC}

By the end of Phase 2, we will with probability $1-O(n^{-0.51})$ have found a proper 2-factor
with all cycles of length at least $n_c$. Call this subgraph $\Pi^*$.

\medskip

In this section, we will use the edges in
$$E_C=\set{e\in E_{t_0}\setminus (E_{t_4}\cup E(\Psi_1)):e\cap V_0=\emptyset}$$
to turn $\Pi^*$ into a Hamilton cycle that contains $M_0$, w.h.p. It is basically a second moment calculation with a twist to keep the variance under control. We note that Lemma \ref{lem5} continues to hold if we replace $E_B$ by $E_C$ and $\a$ by $\frac{1}{20}+o(1)$.

\medskip

Arbitrarily assign an orientation to each cycle. Let $C_1,...,C_k$ be the cycles of $\Pi^*$ (note that if $k=1$ we are done) and let $c_i = \rdup{|C_i\setminus W|/2}$. Then $c_i \geq \frac{n_c}{2}-O(n^{99/100}) \geq \frac{99 n}{\log n}$ for all $i$. Let $a= \frac{n}{\log n}$ and $m_i = 2\lfloor \frac{c_i}{a} \rfloor+1$ for all $i$ and $m = \sum_{i=1}^k m_i$. We arbitrarily orient the cycles $C_1,\ldots,C_k$. Then from each $C_i$, we will consider choosing $m_i$ edges $\set{v,w}$ such that $v,w \in C_i\setminus W$ and $v$ is the head of a non-$M_0$ arcs after the arbitrary orientation of the cycles. We then delete these $m$ arcs and replace them with $m$ others to create a proper Hamilton cycle. We use a second moment calculation to show that such a substitution is possible w..h.p.

\medskip

Given such a deletion of edges, re-label the broken arcs as $(v_j ,u_j), j \in [m]$ as follows: in cycle $C_i$ identify the lowest numbered vertex $x_i\in [n]$ which loses a cycle edge directed out of it. Put $v_1=x_1$ and then go round $C_1$ defining $v_2,v_3,\ldots v_{m_1} $ in order. Then let $v_{m_1+1}=x_2$ and so on.
We thus have $m$ path sections $P_j\in {\cal P}(u_{\f(j)},v_j) $ in $\Pi^{*}$ for some permutation $\f$.

It is our intention to rejoin these path sections of $\Pi^{*}$ to make a Hamilton cycle using $E_C$, if we can. Suppose we can. This defines a permutation $\r$ on $[m]$ where $\r (i) = j$ if $P_i$ is joined to $P_j$ by $(v_i,u_{\phi (j)})$, where $\r\in H_m$, the set of cyclic permutations on $[m]$. We will use the second moment method to show that a suitable $\r$ exists w.h.p. A technical problem forces a restriction on our choices for $\r$. This will produce a variance reduction in a second moment calculation, as explained in \eqref{s=t}.

Given $\r$ define $\lambda=\f\r$. In our analysis we will restrict our attention to $\r\in R_{\f} =\{ \r \in H_m : \f \r \in H_m \}$. If $\r\in R_{\f}$ then we have not only constructed a Hamilton cycle in $\Pi^{*}\cup E_C$, but also in the \emph{auxiliary digraph} $\LL$, whose edges are $(i, \lambda(i))$.

\medskip

The following lemma is from \cite{CF1}. The content is in the lower bound. It shows that there are still many choices for $\r$ and it is needed to show that the expected number of possible re-arrangements of path sections grows with $n$.
\begin{lemma}\label{lem8}
$(m-2)! \leq |R_{\phi}| \leq (m-1)!$
\end{lemma}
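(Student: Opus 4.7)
The upper bound $|R_\phi|\le (m-1)!$ is immediate, since $R_\phi\subseteq H_m$ and $|H_m|=(m-1)!$.

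For the lower bound, my plan is to follow the argument of Cooper and Frieze~\cite{CF1}. The plan rests on two observations. First, $|R_\phi|$ depends only on the cycle type of $\phi$, via the conjugation bijection $\rho\mapsto \tau\rho\tau^{-1}$ sending $R_\phi$ onto $R_{\tau\phi\tau^{-1}}$ bijectively (since conjugation preserves $H_m$). In the present setting, $\phi$ has cycle type $(m_1,\ldots,m_k)$ with each $m_i$ odd: this is because $m_i=2\lfloor c_i/a\rfloor+1$ and $\phi$ acts on the block of $m_i$ indices corresponding to cycle $C_i$ of $\Pi^*$ as an $m_i$-cycle (the labels $v_1,\ldots,v_{m_i}$ and $u_1,\ldots,u_{m_i}$ are in cyclic order around $C_i$, which gives $\phi(j)=j-1$ mod $m_i$ within each block). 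We may therefore place $\phi$ in canonical form, where $\phi|_{B_i}$ is the standard cyclic shift on $B_i=\{m_1+\cdots+m_{i-1}+1,\ldots,m_1+\cdots+m_i\}$.

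Second, parameterize each $\rho\in H_m$ by the sequence $(1=c_1,c_2,\ldots,c_m)$ where $c_{t+1}=\rho(c_t)$, giving the $(m-1)!$ elements of $H_m$. I would identify a family of $(m-2)!$ such sequences for which $\phi\rho$ can be verified to be a single $m$-cycle. Concretely: fix one entry of the sequence (say $c_2$) to lie in a prescribed block distinct from that of $c_1$, so that the $\phi\rho$-orbit of $1$ is forced to leave $B_1$ immediately, and let the remaining $m-2$ entries range over all $(m-2)!$ orderings of the leftover elements. One then verifies that $\phi\rho$ is cyclic by tracing how $\phi\rho=\phi\circ\rho$ moves between the blocks $B_i$: a step of $\rho$ in the prescribed sequence followed by the cyclic shift $\phi$ inside some block either advances to a new block or produces a controlled rotation inside the current block, and by the prescription this eventually visits every block.

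The main obstacle is precisely this verification. The lemma does \emph{not} hold for arbitrary $\phi$ (for instance, if $m$ is even and $\phi$ is a single $m$-cycle then one checks directly that $|R_\phi|=0$, well below $(m-2)!$), so the argument must genuinely exploit the odd parity of each $m_i$. The odd parity is what ensures that the iterated action of $\phi\rho$ inside each block $B_i$ traces out the whole block before handing off to the next, rather than getting trapped on a proper sub-orbit; the explicit block-by-block bookkeeping that this requires is the combinatorial heart of the lemma, and is carried out in~\cite{CF1}.
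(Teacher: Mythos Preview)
The paper itself does not prove this lemma; it simply quotes it from \cite{CF1}. In that narrow sense your proposal---to invoke \cite{CF1}---matches the paper exactly, and your preliminary observations (the upper bound is trivial; $|R_\phi|$ depends only on the cycle type of $\phi$; in the present setup $\phi$ is a product of cycles of odd lengths $m_1,\ldots,m_k$; the lemma genuinely fails for a single even cycle by a sign argument) are all correct and to the point.

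Where your sketch goes wrong is the concrete construction you propose for the lower bound. You suggest fixing $c_2$ to a prescribed value and letting $c_3,\ldots,c_m$ range over all $(m-2)!$ orderings, then ``verifying'' that $\phi\rho$ is always an $m$-cycle. This is false already in tiny cases. Take $m=4$ with $\phi$ fixing $1$ and cycling $(2\,3\,4)$, so the cycle type is $(1,3)$ with both parts odd. If one fixes $c_1=1$, $c_2=2$, the two completions give $\rho=(1\,2\,3\,4)$ and $\rho=(1\,2\,4\,3)$; the first has $\phi\rho=(1\,3\,2\,4)\in H_4$ but the second has $\phi\rho$ with a $2$-cycle $(1\,3)$, hence $\phi\rho\notin H_4$. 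The same failure occurs for every fixed choice of $c_2$. So the family you describe does not consist of $(m-2)!$ valid permutations, and no amount of ``block-by-block bookkeeping'' will rescue it: the construction is simply the wrong one. The actual argument in \cite{CF1} does not proceed by freeing $m-2$ coordinates of the $\rho$-sequence; you should consult that paper directly rather than relying on this sketch.
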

\medskip

Let $H$ be the graph induced by the union of $\Pi^*$ and $E_C$. In the following lemma we drop the requirement that events occur with probability $1-O(n^{-0.51})$. This requirement was used to handle issues related to $M_0$ and the edges chosen. At this point these issues no longer matter and w.h.p. takes its usual meaning.
\begin{lemma}
$H$ contains a Hamilton cycle w.h.p.
\end{lemma}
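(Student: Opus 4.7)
The plan is to run the classical second moment method on the set of proper Hamilton cycles of $\Pi^*\cup E_C$ that can be built from $\Pi^*$ by the labelled path-rearrangement procedure described just above the lemma. Let $X$ count the pairs $(S,\r)$ where $S$ is a choice of $m_i$ heads of non-$M_0$ arcs inside each $C_i\setminus W$ (producing the path sections $P_j\in\pee(u_{\f(j)},v_j)$), $\r\in R_\f$, and every rejoining arc $(v_i,u_{\f\r(i)})$ lies in $E_C$. Each such $(S,\r)$ yields a Hamilton cycle of $\Pi^*\cup E_C$ that contains $M_0$, so it suffices to prove $X>0$ w.h.p. Throughout I would condition on the high-probability event $\cE$ encompassing the conclusions of Phases 1 and 2 and on $|E_C|$, so that $\Pi^*$ and all edges already exposed are frozen and Lemma \ref{lem5} applies to $E_C$ (with $\a\approx 1/20$, since $t_0-t_4=t_0/10$).

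For the first moment, a fixed $(S,\r)$ prescribes $m$ pairwise disjoint edges in $V_1=[n]\setminus V_0$ that must lie in $E_C$, and by \eqref{eq6} the conditional probability of this event is $(1+o(n^{-1/13}))(\log n/(10n))^m$. The number of admissible $S$ is $\prod_i\binom{c_i}{m_i}$ times the ways of selecting heads along each cycle, and by Lemma \ref{lem8} the number of $\r$ is at least $(m-2)!$. Since $m=\Theta(\log n)$ and the $c_i$ are $\Theta(n/\log n)$, multiplying these factors and invoking Stirling shows that $\E(X\mid\cE)$ diverges at a super-polynomial rate; only the divergence itself matters.

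The core of the argument is $\E(X^2\mid\cE)\leq(1+o(1))(\E(X\mid\cE))^2$. Expanding and grouping ordered pairs $((S,\r),(S',\r'))$ by the number $r$ of shared rejoining edges, the $r=0$ term gives exactly $(1+o(1))(\E X)^2$ by a second application of \eqref{eq6} to a set of $2m$ disjoint edges. For $r\ge 1$ the contribution is bounded by counting the pairs of auxiliary Hamilton cycles $\LL=\f\r$ and $\LL'=\f\r'$ on $[m]$ whose edge sets overlap in a prescribed number of arcs; because the restriction $\r\in R_\f$ forces $\LL$ to be a single Hamilton cycle (rather than an arbitrary union of cycles), the standard symmetric-difference estimate of \cite{CF1} applies and gives a geometric series in $r/m$ that sums to $o((\E X)^2)$. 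Chebyshev then yields $\Pr(X=0\mid\cE)=o(1)$, and since $\Pr(\cE)=1-o(1)$ the lemma follows.

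The main obstacle is the overlap calculation in the second moment. The combinatorics of identifying the $r$ common rejoining arcs across $(S,\r)$ and $(S',\r')$, describing the symmetric difference as a union of alternating switching paths in $\LL$, and counting these configurations finely enough to beat the factor $(n/\log n)^r$ coming from the probability saving, is delicate; this is exactly the place where the restriction $\r\in R_\f$ was engineered to help, and I expect the estimate to parallel the one in \cite{CF1}, with the only genuinely new issue being the need to route all probabilities through Lemma \ref{lem5} so as to absorb the conditioning on the $O(n^{11/12})$ edges already exposed incident to $W$.
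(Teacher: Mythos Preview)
Your overall strategy is the paper's: a second moment on the count $X$ of rearrangements $(S,\r)$ with $\r\in R_\f$ whose $m$ rejoining edges all lie in $E_C$, with Lemma~\ref{lem5} supplying the edge probabilities and Lemma~\ref{lem8} the lower bound on $|R_\f|$. The first-moment estimate and the treatment of the disjoint-rejoining-edge term are exactly right (a minor point: the paper takes $\a=1/(10+o(1))$, but your $\a\approx 1/20$ is what actually follows from $|E_C|\sim t_0/10$; either constant works).

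Where your sketch diverges from the actual argument is in the overlap calculation. You propose to group pairs by the single parameter $r=|N\cap N'|$ and to reduce to counting pairs of auxiliary Hamilton cycles $\LL,\LL'$ on $[m]$ sharing $r$ arcs via a symmetric-difference estimate. That reduction is not available: the labelling of $[m]$, and hence $\f$ and $\LL$, is determined by $S$, so when $S\neq S'$ the two auxiliary cycles do not even live on a common ground set. The paper instead groups by the pair $(s,t)=(|M\cap M'|,|N\cap N'|)$ of shared \emph{deleted} and shared \emph{added} edges. One first observes $t\le s$ (each shared added edge $(v,u)$ forces the unique $\Pi^*$-arc into $u$ to lie in $M\cap M'$). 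The purpose of the restriction $\r\in R_\f$ is then not a switching-path count but the following rigidity: if $t=s$, the cyclicity of $\LL=\f\r$ lets one propagate a single shared deleted arc all the way around, forcing $(S,\r)=(S',\r')$. Hence every off-diagonal pair has $t\le s-1$, and it is this strict gap that makes the double sum $\sum_{s\ge 2}\sum_{t=1}^{s-1}$ small --- the extra factor coming from one more shared deleted edge than shared added edge is exactly what beats the $(n/\log n)^t$ probability saving. Your outline does not contain this two-parameter decomposition or the $t<s$ mechanism, and the ``geometric series in $r/m$'' you anticipate is not how the estimate actually closes.
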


\begin{proof}
Let $X$ be the number of Hamilton cycles in $G$ that can be obtained by removing the edges described above and
rearranging the path segments generated by $\phi$ according to those in $\rho \in R_{\phi}$ and connecting the path segments using edges in $H$.

\medskip

We will use the inequality $\prob(X>0) \geq \frac{\eee(X)^2}{\eee(X^2)}$ to show that such a Hamilton cycle exists with the required probability.

\medskip

The definition of $m_i$ gives us $\frac{n-|W|}{a}-k \leq m \leq \frac{n-|W|}{a}+k$
and so $1.99\log n \leq m \leq 2.01\log n$.
Additionally we will use $k \leq \frac{n}{n_c}=\frac{\log n}{200}$, $m_i \geq 199$ and
$\frac{c_i}{m_i} \geq \frac{a}{2.01}$ for all $i$.

\medskip

{From} Lemmas \ref{lem5} and \ref{lem8}, we have, with $\a=1/20+o(1)$,
\begin{align}
\eee(X) & \geq (1-o(1))\left( \frac{2\a\log n}{n}\right)^m (m-2)!
\prod_{i=1}^k \binom{c_i}{m_i} \label{eq10a}\\
& \geq  \frac{1-o(1)}{m^{3/2}} \bfrac{2m\a\log n}{en}^m
\prod_{i=1}^k \brac{ \bfrac{c_ie^{1-1/10m_i}}{m_i^{1+(1/2m_i)}}^{m_i}
\bfrac{1-2m_i^2/c_i}{\sqrt{2\pi}}}\label{eq10aa}\\
&=\frac{(1-o(1))e^{-k/10}(2\p)^{-k/2}}{m^{3/2}} \bfrac{2m\a\log n}{en}^m
\prod_{i=1}^k \bfrac{c_ie}{m_i^{1+(1/2m_i)}}^{m_i}
\end{align}
where to go from \eqref{eq10a} to \eqref{eq10aa} we have used the approximation $(m-2)!\geq m^{-3/2}(m/e)^m$ and
$$\binom{c_i}{m_i}\geq \frac{c_i^{m_i}(1-2m_i^2/c_i)}{m_i!}\text{ and }
m_i!\leq \sqrt{2\p m_i}\bfrac{m_i}{e}^{m_i}e^{1/10m_i}.$$

{\bf Explanation of \eqref{eq10a}:} We choose the arcs to delete in
$\prod_{i=1}^k \binom{c_i}{m_i}$
ways and put them together as explained prior to Lemma \ref{lem8}
in at least $(m-2)!$
ways. The probability that the required edges exist in $E_C$ is
$(1+o(1))\bfrac{2\a\log n}{n}^m$,
from Lemma \ref{lem5}.

\medskip
Continuing, we have
\begin{align}
\eee(X)& \geq  \frac{(1-o(1))(2\pi)^{-k/2}e^{-k/10}}{m^{3/2}}
\left( \frac{2m\a\log n}{en} \right)^m
\prod_{i=1}^k \left( \frac{c_ie}{(1.02)m_i} \right)^{m_i} \nonumber \\
& \geq  \frac{(1-o(1))(2\pi)^{-k/2}}{n^{1/2000}m^{3/2}}
\left( \frac{2m\a\log n}{en} \right)^m \bfrac{ea}{2.01\times 1.02}^m \nonumber \\
& \geq  \frac{1-o(1)}{n^{1/1000}m^{3/2}}
\left( \frac{\log n}{30} \right)^m \nonumber\\
& \to\infty.
\end{align}

\medskip

Let $M ,M^{\prime}$ be two sets of selected edges which have been deleted
in $\Pi^{*}$ and whose path sections have been
re-arranged into Hamilton cycles according to $\r , \r^{\prime}$ respectively.
Let $N,N'$ be the corresponding sets of edges which have been added to make
the Hamilton cycles. Let $\Omega$ denote the set of choices for $M$ (and $M'$.)

Let $s=|M\cap M'|$ and $t=|N\cap N'|$. Now $t\leq s$ since if $(v,u)\in
N\cap N'$ then there must be a unique $(\tilde{v},u)\in M\cap M'$ which
is the unique $\Pi^{*}$-edge into $u$. It is shown in \cite{CF1} that 
\beq{s=t}
\text{$t=s$ implies $t=s=m$ and $(M,\rho )=(M',\rho ')$.}
\eeq
(This removes a large term from the second moment calculation). Indeed, suppose then that $t=s$ and $(v_i,u_i)\in M\cap M'$.
Now the edge $(v_i,u_{\lambda (i)})\in N$ and since $t=s$ this edge must
also be in $N'$. But this implies that $(v_{\lambda (i)},u_{\lambda (i)})\in M'$
and hence in $M\cap M'$. Repeating the argument we see that
$(v_{\lambda ^k(i)},u_{\lambda ^k(i)})\in M\cap M'$ for all
$k\geq 0$. But $\lambda$ is cyclic and so our claim follows.

If $\langle s,t \rangle$ denotes the case where $s= |M\cap M'|$ and
$t=|N\cap N'|$, then
\begin{align*}
\eee(X^2) & \leq \eee(X) + (1+o(1))\sum_{M\in \Omega} \bfrac{2\a\log n}{n}^m
\sum_{\substack{M'\in \Omega\\ N'\cap N = \emptyset}} \left(\frac{2\a\log n}{n}\right)^m \\
& + (1+o(1)) \sum_{M\in \Omega} \left(\frac{2\a\log n}{n}\right)^m \sum_{s=2}^m
\sum_{t=1}^{s-1}
\sum_{\substack{M'\in\Omega\\ \langle s,t \rangle}} \left(\frac{2\a\log n}{n}\right)^{m-t} \\
& = \eee(X) + E_1 + E_2 \ \text{say.}
\end{align*}
Note that $E_1 \leq (1+o(1))\eee(X)^2$.

\medskip

Now, with $\s_i$ denoting the number of common $M\cap M'$ edges selected from $C_i$,
\[
E_2 \leq E(X)^2 \sum_{s=2}^m \sum_{t=1}^{s-1} \binom{s}{t}
\bigg[ \sum_{\sigma_1+...+\sigma_k=s} \
\prod_{i=1}^k \frac{\binom{m_i}{\sigma_i}
\binom{c_i-m_i}{m_i - \sigma_i}}{\binom{c_i}{m_i}} \bigg] \frac{(m-t-1)!}{(m-2)!}
\left(\frac{n}{2\a\log n}\right)^t.
\]
{\bf Some explanation:} There are $\binom{s}{t}$ choices for $N\cap N'$, given $s$ and $t$. Given $\s_i$ there are $\binom{m_i}{\sigma_i}$ ways to choose $M\cap M'$ and $\binom{c_i-m_i}{m_i - \sigma_i}$ ways to choose the rest of $M'\cap C_i$. After deleting $M'$ and adding $N\cap N'$ there are at most $(m-t-1)!$ ways of putting the segments together to make a Hamilton cycle.

We see that
\[
\frac{\binom{c_i-m_i}{m_i - \sigma_i}}{\binom{c_i}{m_i}} \leq \frac{\binom{c_i}{m_i - \sigma_i}}{\binom{c_i}{m_i}}=\frac{m_i(m_i-1)\cdots(m_i-\s_i+1)}{(c_i-m_i+1)\cdots (c_i-m_i+\s_i)}
\leq (1+o(1))\left(\frac{2.01}{a}\right)^{\sigma_i}
\text{exp}\left\{-\frac{\sigma_i(\sigma_i-1)}{2m_i}\right\}.
\]

Also, Jensen's inequality, applied twice implies that 
$$\sum_{i=1}^k \frac{\sigma_i^2}{2m_i}=\brac{\sum_{i=1}^k\s_i^2}\cdot\brac{\sum_{i=1}^k\frac{\s_i^2}{\sum_{i=1}^k\s_i^2} \frac{1}{2m_i}} \geq \frac{s^2}{k}\cdot \frac{k}{2m}=\frac{s^2}{2m} \ \text{for } \sigma_1+...+\sigma_k = s.$$
Furthermore,
$$\sum_{i=1}^k \frac{\sigma_i}{2m_i} \leq \frac{k}{2} \  \text {and} \\
\sum_{\sigma_1+...+\sigma_k=s} \ \prod_{i=1}^k \binom{m_i}{\sigma_i} = \binom{m}{s}.$$
Using these approximations, we have
$$\sum_{\sigma_1+...+\sigma_k=s} \
\prod_{i=1}^k \frac{\binom{m_i}{\sigma_i}
\binom{c_i-m_i}{m_i - \sigma_i}}{\binom{c_i}{m_i}}\leq
e^{(1+o(1))k/2}\exp\left\{-\frac{s^2}{2m}\right\} \left( \frac{2.01}{a} \right) ^s
\binom{m}{s}.$$

So we can write
$$\frac{E_2}{\eee(X)^2}  \leq
e^{(1+o(1))k/2}\sum_{s=2}^m \sum_{t=1}^{s-1}\binom{s}{t}
\exp\left\{-\frac{s^2}{2m}\right\}\left( \frac{2.01}{a} \right) ^s
\binom{m}{s}\frac{(m-t-1)!}{(m-2)!}
\left( \frac{n}{2\a\log n}\right) ^t.$$
We approximate
$$\binom{m}{s}\frac{(m-t-1)!}{(m-2)!}\leq
C_1\frac{m^s}{s!}\bfrac{m-t-1}{e}^{m-t-1}\bfrac{e}{m-2}^{m-2}\leq C_2\frac{m^s}{s!}\frac{e^{t}}{m^{t-1}},$$
for some constants $C_1,C_2>0$.

Substituting this in, we obtain,
\begin{align*}
\frac{E_2}{\eee(X)^2}
&\leq_b n^{1/399} m\sum_{s=2}^m \left( \frac{2.01}{a}\right)^s \frac{m^s}{s!}\text{exp}\left\{ - \frac{s^2}{2m}\right\}  \sum_{t=1}^{s-1} \binom{s}{t}\left(\frac{en}{2\a m\log n}\right)^t \\
&\leq n^{1/399} m\sum_{s=2}^m \left( \frac{2.01}{a}\right)^s \frac{m^s}{s!}\text{exp}\left\{ - \frac{s^2}{2m}\right\} \times 2m\bfrac{en}{2\a m\log n}^{s-1}\\
& \leq_b \frac{m^2}{n^{.99}} \sum_{s=2}^\infty \left( \frac{(2.01)en\text{ exp}\{-s/2m\} }{2\a a\log n}\right)^s \frac{1}{s!} \\
&\leq \frac{m^2}{n^{.99}} \sum_{s=2}^\infty\frac{30^s}{s!}\\
& =O(n^{-9/10}).
\end{align*}
Combining things, we get
$$\eee(X^2)  \leq \eee(X) + \eee(X)^2(1+o(1)) + \eee(X)^2 n^{-9/10}$$
and so 
$$\frac{(\eee X)^2}{\eee(X^2)} \geq \frac{1}{\frac{1}{\eee X} +1+o(1) + n^{-9/10}} \longrightarrow 1$$
as $n \rightarrow \infty$, as desired.
\end{proof}

\begin{remark}\label{disjoint}
We now consider the case where we are given $M_0$ and we must choose edges disjoint from $M_0$. 
\begin{enumerate}[(a)]
\item If we choose $t_1$ edges independently of $M_0$ then the probability they are disjoint from $M_0$ is, where $N=\binom{n}{2}$,
\beq{disj}
\frac{\binom{N-n/2}{t_1}}{\binom{N}{t_1}}=\prod_{i=0}^{t_1-1}\brac{1-\frac{n}{2(N-i)}}\geq \exp\set{-\sum_{i=0}^{t_1-1}\frac{n}{2(N-i)}+O\bfrac{t_1n^2}{N^2}}=n^{-1/2+o(1)}.
\eeq
\item We have shown that if we generate $t_1$ edges independent of $M_0$ then conditional on $t_0\leq\t_1\leq t_1$ we have that with probability $1-O(n^{-0.51})$ there is a perfect matching in $E_{\t_1}\setminus M_0$.
\item If we only choose from edges not in $M_0$ then the distribution of the edges we choose is the same as simply conditioning on $E_{t_1}\cap M_0=\emptyset$.
\end{enumerate}
It follows from (a),(b),(c) that if we avoid $M_0$ then we will still w.h.p. find a perfect matching $M_1$. Indeed, letting $\cA=\set{M_1\text{ exists}},\cB=\set{E_{t_1}\cap M_0=\emptyset}$ andn $\cT=\set{t_0\leq\t_1\leq t_1}$ as before, we have
\beq{f*k}
\Pr(\bar{\cA}\mid\cB)=\frac{\Pr(\bar{\cA}\cB\cT)}{\Pr(\cB)}+ \frac{\Pr(\bar{\cA}\cB\bar{\cT})}{\Pr(\cB)}\leq \frac{\Pr(\bar{\cA}\mid\cT)}{\Pr(\cB)\Pr(\cT)}+ \Pr(\bar{\cT}\mid\cB).
\eeq
Now
$$\frac{\Pr(\bar{\cA}\mid\cT)}{\Pr(\cB)\Pr(\cT)} =\frac{O(n^{-0.51})}{\Omega(n^{-0.5+o(1)})(1-o(1))}=o(1)$$
and this deals with the first term on the RHS of \eqref{f*k}.

For the second term  on the RHS of \eqref{f*k} we have
$$\Pr(\bar{\cT}\mid\cB)\leq n\frac{\binom{\binom{n}{2}-\frac12n-(n-2)}{t_1}}{\binom{\binom{n}{2}-\frac12n}{t_1}}\leq n\brac{1-\frac{n-2}{\binom{n}{2}-\frac12n}}^{t_1}=o(1).$$
It follows that $\Pr(\bar{\cA}\mid\cB)=o(1)$. The remainder of the proof that there is a Hamilton cycle containing $M_0$ goes through with minor changes that reflect the fact that we do not choose edges of $M_0$.
\end{remark}

\subsection{Proof of Corollary \ref{cor1}}
We begin the proof by replacing the sequence $E_0,E_1,\ldots,E_m,\ldots$ by $E_0',E_1',\ldots,E_m',\ldots,$ where the edges of $E_m'=\set{e_1',e_2',\ldots,e_m'}$ are randomly chosen \emph{with replacement}. This means in particular that $e_m$ is allowed to be a member of $E_{m-1}'$. We let $G_m'$ be the graph $([n],E_m')$.

If an edge appears a second time, it will keep its original color. We let $R$ denote the set of edges that get repeated, up to time $\t_{1,1}$. Note that 
\beq{tau11}
2t_0\leq \t_{1,1}\leq 2t_1\ w.h.p.
\eeq
since the Chernoff bounds imply that w.h.p. we there at most $t_0+O(n^{1/2}\log n)$ edges of each color at time $t_0$ and at least $t_1-O(n^{1/2}\log n)$ edges of each color at time $t_1$. Note that if $e_{\t_{1,1}}=\set{v,w}\in R$ then $v$ or $w$ is isolated in $G_{\t_{1,1}-1}^{(b)}$ or $G_{\t_{1,1}-1}^{(w)}$.
\beq{(b)}
\Pr(e_{\t_{1,1}}\in R)\leq 4\Pr(\exists e=\set{v,w}\in R:v\text{ has black degree 1 at time $\t_{1,1}$})=o(1).
\eeq
{\bf Explanation:}
The factor 4 comes from $v$ or $w$ having black or white degree one at time $\t_{1,1}$. Next
suppose first that $e_{\t_{1,1}}=\set{v,w}$ and that $v$ has black degree zero in
$G_{\t_{1,1}-1}$ and $w$ also has black degree zero in $G_{\t_{1,1}-1}$. Now w.h.p. there is no white edge joining $v$ and $w$ and so $e_{\t_{1,1}}\notin R$. Indeed, the probability of this event can be bounded by
$$o(1)+\sum_{t=2t_0}^{2t_1}\binom{n}{2}\frac{1}{\binom{n}{2}} \brac{\brac{1-\frac{n-1}{2\binom{n}{2}}}^{t} \brac{1-\frac{n-1}{2\binom{n}{2}}}^{t-1}}\leq o(1)+2t_1\bfrac{\log^2n}{n}^2=o(1).$$
The $o(1)$ accounts for $\t_{1,1}$ not being in the interval $[2t_0,2t_1]$. The factor $\binom{n}{2}$ accounts for the choice of ${v,w}$. The factor $1/\binom{n}{2}$ is the probability that the $t$th edge is $\set{v,w}$ and the final product accounts the black degree of both $u,v$ being zero.

Now suppose that $e_{\t_{1,1}}=\set{v,w}$ and that $v$ has black degree zero in $G_{\t_{1,1}-1}$ and $w$ has positive black degree in $G_{\t_{1,1}-1}$. An argument similar to that given for Lemma \ref{lem0}(g) shows that w.h.p. the maximum white degree in $G_{2t_1}'$ is $O(\log n)$. There are $n-1$ choices for $w$,
of which $O(\log n)$ put $e_{\t_{1,1}}$ into $R$. So $e_{\t_{1,1}}$ has an
$O(\log n/n)$ chance of being in $R$. This verifies \eqref{(b)}.

\medskip
At time $m=\t_{1,1}$ the graphs $G^{(b)'}_m,G^{(w)'}_m$ will w.h.p. contain perfect matchings, see \cite{ER-M}. That paper does not allow repeated edges, but removing them enables one to use the result claimed. Here we use the fact that w.h.p. there are only $O(\log^2n)$ repeated edges, (as explained below), they are far apart, and are not incident to any low degree vertices. Thus any argument based on expansion goes through without difficulty. We choose perfect matchings $M_B,M_W$ uniformly at random from $G^{(b)'}_{\t_{1,1}}$, $G^{(w)'}_{\t_{1,1}}$ respectively. Thus by symmetry, each is a random perfect matching disjoint from its oppositely colored perfect matching.

\medskip
We couple the sequence $G_1,G_2,\ldots,$ with the sequence
$G_1',G_2',,\ldots,$ by ignoring repeated edges in the
latter. Thus $G_1',G_2',\ldots,G_m'$ is coupled with a sequence
$G_1,G_2,\ldots,G_{m'}$ where $m'\leq m$. 
It follows
from \eqref{(b)} that w.h.p. the coupled processes stop with the same edge.
Furthermore, they stop with two matchings $M_B,M_W$, independently chosen. We can then begin analysing Phase 2 and Phase 3 within this context.

\medskip
We will prove that
\beq{(a)}
\Pr(M_B\cap R=\emptyset)\geq n^{-1/2-o(1)}.
\eeq
Corollary \ref{cor1} follows from this. If $M_B\cap R=\emptyset$ then the white edges are chosen conditional on being disjoint from $M_B$. It follows from \eqref{(a)} and the fact that Phases 1 and 2 succeed with probability $1-O(n^{-0.51})$ (i.e. when ignoring the conditioning, $M_B\cap R=\emptyset$) that they succeed w.h.p. conditional on $M_B\cap R=\emptyset$. 

Phase 3 succeeds w.h.p. even if we avoid using edges in $R$. We have already carried out calculations with an arbitrary set of $O(n^{99/100}\log n)$ edges that must be avoided. The size of $R$ is dominated by a binomial $Bin(O(n\log n),O(n^{-1}\log n))$ and so $|R|=O(\log^2n)$ w.h.p. So avoiding $R$ does not change any calculation in any significant way. In other words, we can w.h.p. find a zebraic Hamilton cycle in $G_m'$.

\medskip
Finally note that the Hamilton cycle we obtain is zebraic.

{\bf Proof of \eqref{(a)}:} $R$ is a uniformly random set, given its size and it is independent of $M_B$. Indeed, we can repeat edges arbitrarily without changing $M_B$.
Let $t_B$ be the number of black edges, then
$$\Pr(M_B\cap R=\emptyset\mid t_B)\geq\brac{1-\frac{n/2}{N}}^{t_B}\geq \exp\set{-t_B\brac{\frac{1}{n}+O\bfrac{1}{n^2}}}.$$
{\bf Explanation of first inequality:} Each choice of black edge has at most an $\frac{n/2}{N}$ chance of repeating an edge of $M_B$, regardless of previously seen edges.

To remove the conditioning, we take expectations and then by convexity
$$\E\brac{\exp\set{-t_B\brac{\frac{1}{n}+O\bfrac{1}{n^2}}}}\geq\exp\set{- \E(t_B)\brac{\frac{1}{n}+O\bfrac{1}{n^2}}}\geq n^{-1/2-o(1)}$$
since $\E(t_B)\sim \frac12n\log n$. This proves \eqref{(a)}.

\section{Proof of Theorem \ref{th4}}\label{th4proof}
For a vertex $v\in [n]$ we let its \emph{black} degree $d_b(v)$ be the number of black
edges incident with $v$ in $G_{t_0}$. We define its \emph{white} degree $d_w(v)$ analogously. Let a vertex be \emph{large} if $d_b(v),d_w(v)\geq L_0$ and \emph{small} otherwise.

We first show how to construct zebraic paths between a pair $x,y$ of large vertices. We can in fact construct paths, even if we decide on the color of the edges incident with $x$ and $y$. We do breadth first searches from each vertex, alternately using black and white edges, constructing search trees $T_x,T_y$. We build trees with $n^{2/3+o(1)}$ leaves and then argue that we can connect the leaves with a correctly colored edge. We then find paths between small vertices and other vertices by piggybacking on the large to large paths.

We will need the following structural properties:
\begin{lemma}\label{lem12}
The following hold w.h.p.:
\begin{description}
\item[(a)] No set $S$ of at most 10 vertices that is connected
in $G_{t_1}$ contains three small vertices.
\item[(b)] Let $a$ be a positive integer, independent of $n$. No set of vertices $S$, with $|S|=s\leq aL_1,L_1=\frac{\log n}{\log\log n}$, contains more than $s+a$ edges in $G_{t_1}$.
\item[(c)] There are at most $n^{2/3}$ small vertices in $G_{t_0}$.
\item[(d)] There are at most $\log^3n$ isolated vertices in $G_{t_0}$.
\end{description}
\end{lemma}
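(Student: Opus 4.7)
My plan is to prove each of the four parts by a first-moment / union-bound argument in the style of Lemma~\ref{lem0}, passing between $G_{t_i}$ and the companion $G_{n,p}$ model via \eqref{mon} whenever the property is monotone.

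Parts (c) and (d) are the most direct. For (d), the probability that a specified vertex is isolated in $G_{t_0}$ is $(1-2/(n-1))^{t_0}=(1+o(1))e^{-(\log n-2\log\log n)}=(1+o(1))(\log^2n)/n$, so the expected number of isolated vertices is $(1+o(1))\log^2n$, and Markov's inequality gives at most $\log^3n$ w.h.p. For (c), conditional on the edge set of $G_{t_0}$ the coloring is fresh independent randomness, so in the companion $G_{n,p_0}$ model we have $d_b(v)\sim\Bin(n-1,p_0/2)$ with mean $\sim(\log n)/2$. A Chernoff lower-tail bound
\[
\Pr(d_b(v)<L_0)\leq\bfrac{\log n/2}{L_0}^{L_0}e^{L_0-(\log n)/2}=n^{-0.45+o(1)},
\]
and the same for $d_w(v)$, yields $\E[|V_\s|]\leq 2n\cdot n^{-0.45+o(1)}=O(n^{0.55+o(1)})$. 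Markov then gives $|V_\s|\leq n^{2/3}$ w.h.p.

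For (a), I plan to bound the expected number of connected $k$-vertex sets ($3\leq k\leq 10$) containing three small vertices by
\[
\sum_{k=3}^{10}\binom{n}{k}\binom{k}{3}k^{k-2}p_1^{k-1}\brac{n^{-0.45+o(1)}}^3,
\]
using Cayley's formula ($k^{k-2}$ spanning trees) to bound the number of connected configurations on a $k$-set. The smallness event is determined by $G_{t_0}$ and its coloring, so it is independent of the extra edges in $G_{t_1}\setminus G_{t_0}$; after conditioning on $G_{t_0}$ the factorization is clean. The resulting sum is $O((\log n)^9/n^{0.35})=o(1)$.

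For (b), I bound the expected number of $s$-subsets carrying more than $s+a$ edges by
\[
\sum_{s\leq aL_1}\binom{n}{s}\binom{\binom{s}{2}}{s+a+1}p_1^{s+a+1}
\]
and aim to show this is $o(1)$. The key observation is that $sp_1=O(\log^2n/(n\log\log n))\to 0$ throughout the range, producing a factor $(sp_1)^{a+1}$ that beats the polynomial growth of $\binom{n}{s}\cdot(es/2)^{s+a+1}$. I expect this to be the main technical obstacle: the first-moment bound is borderline near $s\sim aL_1$, so sharp Stirling approximations (retaining the $\sqrt{s}$ correction factors to $\binom{n}{s}$ and $(s+a+1)!$) and careful bookkeeping of the accumulated $e$-factors are needed to drive the summand genuinely below $1/n$.
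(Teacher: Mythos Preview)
Your arguments for (b), (c) and (d) are essentially the paper's, and they go through. For (b) your concern about needing sharp Stirling corrections is misplaced: the crude bounds $\binom{n}{s}\le(ne/s)^s$ and $\binom{\binom{s}{2}}{s+a+1}\le(es/2)^{s+a+1}$ already give a summand of size at most
\[
\bigl(e^2\log n\bigr)^{s}\Bigl(\frac{es\log n}{2n}\Bigr)^{a+1}
\le \bigl(e^2\log n\bigr)^{aL_1}\Bigl(\frac{\log^2 n}{n}\Bigr)^{a+1}
= n^{a+o(1)}\cdot n^{-(a+1)+o(1)}=n^{-1+o(1)},
\]
since $(\log n)^{L_1}=n$, and summing $O(L_1)$ such terms is still $o(1)$. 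No $\sqrt{s}$ factors are needed.

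For (a) there is a genuine gap in the justification. You need the tree $T$ in $G_{t_1}$ but smallness in $G_{t_0}$, and your sentence ``after conditioning on $G_{t_0}$ the factorization is clean'' does not produce the factor $p_1^{k-1}$: once $G_{t_0}$ is fixed, $\Pr(T\subseteq G_{t_1}\mid G_{t_0})$ depends on how many edges of $T$ already lie in $G_{t_0}$, so it is not a constant you can pull outside the expectation over $G_{t_0}$. Equivalently, the random variables ``$T\subseteq G_{t_1}$'' and ``$v_1,v_2,v_3$ small in $G_{t_0}$'' are correlated through the (at most nine) tree edges. The paper fixes this by moving everything into $G_{t_1}$: it introduces an auxiliary notion of a \emph{low colour} vertex (at most $(1+\e)L_0$ edges of some colour in $G_{t_1}$), bounds the expected number of connected $k$-sets in $G_{t_1}$ containing three low-colour vertices (now the tree and the degree constraints live in the same graph, the degree event is monotone decreasing given the tree, and \eqref{mon} applies cleanly to get the $(\,\cdot\,)^3$ factor), and then observes that w.h.p.\ every vertex gains only $o(\log n)$ edges in $E_{t_1}\setminus E_{t_0}$, so ``small in $G_{t_0}$'' implies ``low colour in $G_{t_1}$''. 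This extra step is short but it is what makes the factorization legitimate.
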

\proofstart
(a)
We say that a vertex is a \emph{low color vertex} if it is incident in $G_{t_1}$ to at most $L_\e=(1+\e)L_0$ edges
of one of the colors, where $\e$ is some sufficiently small positive constant.
Furthermore, it follows from \eqref{mon} that
\begin{align}
&\Pr(\exists\ \text{a connected $S$ in $G_{n,t_1}$ with three low color vertices}) \nonumber\\
&\leq \sum_{k=3}^{10}\binom{n}{k}k^{k-2}\frac{\binom{N-k+1}{t_1-k+1}}{\binom{N}{t_1}} \binom{k}{3}\Pr(\text{vertices 1,2,3 are low color }\mid [k] \text{ is a connected set})\label{po1}\\
&\leq_b \sum_{k=3}^{10}\binom{n}{k}k^{k-2}\frac{\binom{N-k+1}{t_1-k+1}}{\binom{N}{t_1}}\binom{k}{3}
\brac{2\sum_{\ell=0}^{L_\e}\binom{n-k}{\ell}\bfrac{p_1}{2}^\ell\brac{1-\frac{p_1}{2}}^{n-k-\ell}}^3\label{po2}\\
&\leq_b \sum_{k=3}^{10}n^k\bfrac{t_1}{N}^{k-1}(n^{-0.45})^3\nonumber\\
&\leq_b \sum_{k=3}^{10}n^k \bfrac{\log n}{n}^{k-1}(n^{-0.45})^3\nonumber\\
&=o(1).\nonumber
\end{align}
{\bf Explanation of \eqref{po1},\eqref{po2}:} Having chosen our tree, $\frac{\binom{N-k+1}{t_1-k+1}}{\binom{N}{t_1}}$ is the probability that this tree exists in $G_{t_1}$. Condition on this and choose three vertices. The final $(\cdots)^3$ in \eqref{po2} bounds the probability of the event that 1,2,3 are low color vertices in $G_{n,p_1}$. This event is monotone decreasing when restricted to the edges of a fixed color, given the conditioning. So we can use \eqref{mon} to replace $G_{n,t_1}$ by $G_{n,p_1}$ here.

\medskip
Now a simple first moment calculation shows that w.h.p. each vertex in
$[n]$ is incident with less than $\log n/(\log\log n)^{1/2}$ edges of $E_{t_1}\setminus
E_{t_0}$. Indeed, the number of such edges incident with a fixed vertex $v$ is dominated by the binomial $Bin(t_1-t_0,2/n)=Bin(2n\log\log n,2/n)$. And then
$$\Pr(\exists v)\leq n\binom{2n\log\log n}{\log n/(\log\log n)^{1/2}}\bfrac{2}{n}^{\log n/(\log\log n)^{1/2}} \leq n\bfrac{4e(\log\log n)^{3/2}}{\log n}^{\log n/(\log\log n)^{1/2}}=o(1).$$
 Hence, for (a) to fail, there would have to be a
relevant set $S$ with three vertices, each incident in $G_{t_1}$ with at most $(1+o(1))L_0$ edges of
one of the colors, contradicting the above.

(b)
We will prove something slightly stronger. Suppose that $p=\frac{K\log n}{n}$
where $K>0$ is arbitrary. We will show this result for $G_{n,p}$.
The result for this lemma follows from when $K=1+o(1)$ and from \eqref{mon}.
We get
\begin{align*}
\Pr(\exists\ S)&\leq_b \sum_{s\geq 4}^{aL_1}\binom{n}{s}\binom{\binom{s}{2}}{s+a+1}
p^{s+a+1}\\
&\leq_b  \sum_{s\geq 4}^{aL_1}\brac{\frac{ne}{s}\cdot\frac{sep}{2}}^s(sep)^{a+1}\\
&\leq_b (Ke^2\log n)^{aL_1}\bfrac{\log^2n}{n}^{a+1}\\
&\le n^{o(1)}\left(\frac{\log^{3+L_1}n}{n}\right)^a\,\frac{\log^2n}{n} \\
&=o(1).
\end{align*}

(c) Using \eqref{mon} we see that if $Z$ denotes the number of small vertices then
$$\E(Z)\leq_b n\sum_{k=0}^{L_0}\bfrac{p_0}{2}^k \brac{1-\frac{p_0}{2}}^{n-1-k}
\leq n^{0.55}.$$
We now use the Markov inequality.

(d) Using \eqref{mon} we see that the expected number of isolated vertices in
$G_{t_0}$ is $O(\log^2n)$. We now use the Markov inequality.
\proofend
Now fix a pair of large vertices $x<y$. We will define sets
$\S{b}_i(z),\S{w}_i(z),
i=0,1,\ldots,\ell_1$, $z=x,y$.
Assume w.l.o.g. that $\ell_1$ is even.
We
let $\S{b}_0(x)=\S{w}_0(x)=\set{x}$ and then $\S{b}_1(x)$ (resp. $\S{w}_1(x)$)
is the set consisting of the first $\ell_0$ black
(resp. white) neighbors of $x$ in $G_{t_0}$. We will use the notation
$\S{c}_{\leq i}(x)=\bigcup_{j=1}^i\S{c}_j(x)$ for $c=b,w$.
We now iteratively define for $i=0,1,\ldots,(\ell_1-2)/2$.
\begin{align*}
\hS{b}_{2i+1}(x)&=\set{v\notin \S{b}_{\leq 2i}(x): v\neq y
\text{ is joined by a black $G_{t_0}$-edge to a vertex in
}\S{b}_{2i}(x)}.\\
\S{b}_{2i+1}(x)&=\text{the first $\ell_0^i$ members of
$\hS{b}_{2i+1}(x)$}.\\
\hS{b}_{2i+2}(x)&=\set{v\notin \S{b}_{\leq 2i+1}: v\neq y
\text{ is joined by a white $G_{t_0}$-edge to a vertex in
}\S{b}_{2i+1}(x)}.\\
\S{b}_{2i+2}(x)&=\text{the first $\ell_0^i$ members of
$\hS{b}_{2i+2}(x)$}:
\end{align*}
We then define, for $i=0,1,\ldots,(\ell_1-2)/2$.
\begin{align*}
\hS{w}_{2i+1}(x)&=\set{v\notin (\S{b}_{\leq \ell_1}(x)\cup \S{w}_{\leq 2i}(x)): v\neq y
\text{ is joined by a white $G_{t_0}$-edge to a vertex in
}\S{w}_{2i}(x)}\\
\S{w}_{2i+1}(x)&=\text{the first $\ell_0^i$ members of
$\hS{w}_{2i+1}(x)$}.\\
\hS{w}_{2i+2}(x)&=\set{v\notin (\S{b}_{\leq \ell_1}(x)\cup \S{w}_{\leq 2i+1}(x)): v\neq y
\text{ s joined by a black $G_{t_0}$-edge to a vertex in
}\S{w}_{2i+1}(x)}\\
\S{w}_{2i+2}(x)&=\text{the first $\ell_0^i$ members of
$\hS{w}_{2i+2}(x)$}:
\end{align*}
\begin{lemma}\label{lem13}
If $1\leq i\leq \ell_1$, then in $G_{t_0}$, for $c=b,w$,
$$\Pr(|\hS{c}_{i+1}(x)|\leq \ell_0|\S{c}_i(x)|\,\mid\, |\S{c}_j(x)|=\ell_0^{j},\,0\leq j
\leq i)=O(n^{-K})\text{ for any constant }K>0.$$
\end{lemma}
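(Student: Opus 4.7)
The plan is to use the principle of deferred decisions on the BFS construction, exploiting the independence of the random two-coloring, in order to reduce the claim to a Chernoff estimate. I will argue directly in the product-coloring model in which each pair of vertices is independently a black edge with probability $p_0/2$ and a white edge with probability $p_0/2$ (equivalently, $G_{n,p_0}$ together with an independent uniform two-coloring of its edges), and convert back to $G_{t_0}$ via an inequality analogous to \eqref{notmon}; the polynomial loss from this conversion is dwarfed by the super-polynomial bound we derive.

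The core structural observation is a \emph{freshness} claim. Let $c$ denote the color used for the transition from $\S{c}_i(x)$ to $\hS{c}_{i+1}(x)$, and consider any pair $(u,v)$ with $u\in\S{c}_i(x)$ and $v\in[n]\setminus(\S{c}_{\leq i}(x)\cup\{x,y\})$. At every earlier step $j\leq i$ of the BFS, the only queries made involve edges of some color $c_j$ incident to $\S{c}_{j-1}(x)$; but the pairwise disjointness of the BFS levels gives $u\notin\S{c}_{j-1}(x)$, and $v\notin\S{c}_{\leq i}(x)\supseteq\S{c}_{j-1}(x)$ gives $v\notin\S{c}_{j-1}(x)$. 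Hence the color-$c$ status of $(u,v)$ has not previously been queried.

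Conditional on any BFS history $\mathcal{H}_i$ compatible with the conditioning $|\S{c}_j(x)|=\ell_0^j$ for $0\leq j\leq i$, the freshness claim implies that the indicators $I_v=\mathbf{1}[v\in\hS{c}_{i+1}(x)]$ for distinct $v\in[n]\setminus(\S{c}_{\leq i}(x)\cup\{x,y\})$ are mutually independent, each with success probability
\[q=1-(1-p_0/2)^{|\S{c}_i(x)|}\geq(1-o(1))\,|\S{c}_i(x)|\,p_0/2,\]
where the lower bound uses $|\S{c}_i(x)|\leq\n_L=n^{2/3+o(1)}$ so that $|\S{c}_i(x)|p_0/2=o(1)$. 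Summing,
\[\E\bigl(|\hS{c}_{i+1}(x)|\mid\mathcal{H}_i\bigr)\geq(100-o(1))\,\ell_0|\S{c}_i(x)|,\]
and the Chernoff bound \eqref{chern1} applied with $\varepsilon$ arbitrarily close to $1$ then yields
\[\Pr\bigl(|\hS{c}_{i+1}(x)|\leq\ell_0|\S{c}_i(x)|\mid\mathcal{H}_i\bigr)\leq\exp\bigl(-\Omega(|\S{c}_i(x)|\log n)\bigr).\]
Since $i\geq 1$ forces $|\S{c}_i(x)|\geq\ell_0=\log n/200$, this bound is $\exp(-\Omega(\log^2 n))=n^{-\omega(1)}$; averaging over $\mathcal{H}_i$ proves the lemma.

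The main technical hurdle is establishing the freshness claim: one has to keep careful track of which pairs of vertices have and have not been inspected by earlier BFS queries, and to use the independence of the black and white subgraphs in the product-coloring model. Once freshness is secured, the rest is a routine concentration estimate, and translating the bound back from the product-coloring model to $G_{t_0}$ is standard.
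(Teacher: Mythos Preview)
Your proof is correct and follows essentially the same route as the paper: transfer to the independent-edge model via \eqref{notmon}, recognize $|\hS{c}_{i+1}(x)|$ as (conditionally) binomial with parameters $n-o(n)$ and $1-(1-p_0/2)^{\ell_0^i}$, giving mean asymptotic to $\tfrac12\ell_0^i\log n=\Omega(\log^2n)$, and apply Chernoff. Your deferred-decisions justification of the binomial law is more explicit than the paper's one-line assertion; two small loose ends are that for $c=w$ you should also exclude $\S{b}_{\leq\ell_1}(x)$ from the range of $v$ and verify freshness against the earlier $b$-tree queries (the same disjointness argument applies verbatim), and that your appeal to ``independence of the black and white subgraphs'' is not actually needed, since your argument already shows the pair $(u,v)$ was never inspected in \emph{either} color.
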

\proofstart
This follows easily from \eqref{notmon} and the Chernoff bounds and $\ell_0^{\ell_1}=o(n)$.
In $G_{n,p_0}$, given that $|\S{c}_{2i}(x)|=\ell_0^i$, each random variable $\hS{c}_{2i+1}(x)$ is binomially distributed with parameters $n-o(n)$ and $1-(1-p_0/2)^{\ell_0^i}$. The mean is therefore asymptotically $\frac12\ell_0^i\log n=\Omega(\log^2n)$ and we are asking for the probability that it is much less than half its mean.
\proofend
It follows from this lemma, that w.h.p., we may define
$\S{b}_0(x),\S{b}_1(x),\ldots,\S{b}_{\ell_1}(x)$ where
$|\S{b}_i(x)|=\ell_0^i$ such that for each $j$ and $z\in \S{b}_j(x)$
there is a zebraic path from $x$ to $z$
that starts with a black edge. For $\S{w}_{\ell_1}(x)$ we can
say the same except that the zebraic
path begins with a white edge.

Having defined the $\S{c}_i(x)$ etc., we define sets
$\S{c}_i(y),i=1,2 \ldots,\ell_1,\,c=b,w$.
We let $\S{b}_0(y)=\S{w}_0(y)=\set{y}$ and then $\S{b}_1(y)$ (resp. $\S{w}_1(y)$)
is the set consisting of the first $\ell_0$ black
(resp. white) neighbors of $y$ that are not in $\S{b}_{\leq
  \ell_1}(x)\cup \S{w}_{\leq \ell_1}(x)$. We note that for $c=b,w$ we
have that w..h.p. $|\hS{c}_1(y)|\geq L_0-18>\ell_0$. This follows from Lemma
\ref{lem12}(b). We can appply this lemma because w.h.p. $t_0\leq \t_1\leq t_1$. Indeed, suppose that $y$ has ten neighbors $T$ in
$\S{w}_{\leq\ell_1}(x)$. Let $S$ be the set of vertices in the paths
from $T$ to $x$ in $\S{w}_{\leq\ell_1}(x)$. If $|S|=s$ then
$S\cup\set{y}$ contains at least $s+9$ edges. This is because every
neighbour after the first adds an additional $k$ vertices and $k+1$ edges to
the subgraph of $G_{t_0}$ spanned by $S\cup\set{y}$,
for some $k\leq \ell_1$. Now $s+1\leq 10\ell_1+1\leq 7L_1$ and the
$s+9$ edges contradict the condition in the lemma, with $a=7$.

We make a slight change in the definitions of the $\hS{c}_i(y)$ in that we keep these sets disjoint from the $\S{c'}_i(x)$.
Thus we take for example
\begin{multline*}
\hS{w}_{2i+1}(y)=\\
\set{v\notin (\S{w}_{\leq 2i}(y)\cup
\S{b}_{\leq \ell_1}(x)\cup \S{w}_{\leq \ell_1}(x)): v
\text{ is joined by a white $G_{t_0}$-edge to a vertex in }\S{w}_{2i}(y)}.
\end{multline*}
 Then we note that excluding $o(n)$ extra vertices has little effect on the
proof of Lemma \ref{lem13} which remains true with $x$
replaced by $y$. We can then define the $\S{c}_{i}(y)$ by taking the first $\ell_0$ vertices.

Suppose now that we condition on the sets $\S{c}_i(x),\S{c}_i(y)$
for $c=b,w$ and $i=0,1,\ldots,\ell_1$. The edges between
the sets with $c=b$ and $i=\ell_1$ and those with $c=w$
and $i=\ell_1$ are unconditioned.
Let
$$\Lambda=\ell_0^{2\ell_1}=n^{4/3-o(1)}.$$
Then, for example, using \eqref{mon}, (strictly speaking, bounding the probability of monotone events in the context of a hypergeometric distribution by the corresponding probability under a binomial distribution),
\beq{eq13}
\Pr(\not\exists\text{ a black $G_{t_0}$ edge joining }\S{b}_{\ell_1}(x),\S{b}_{\ell_1}(y))
\leq 3\brac{1-\frac{\log n}{(2+o(1))n}}^\Lambda=O(n^{-K}),
\eeq
for any positive constant $K$.
\proofend

Thus w.h.p. there is a zebraic path with both terminal edges black between every
pair of large vertices. A similar argument using $\S{w}_{\ell_1}(x),\S{w}_{\ell_1}(y)$
shows that w.h.p. there is a zebraic path with both terminal edges white between
every pair of large vertices.

If we want a zebraic path with a black edge incident with $x$ and a white edge
incident with $y$ then we argue that there is a black $G_{t_0}$ edge between
$\S{b}_{\ell_1}(x)$ and $\S{w}_{\ell_1-1}(y)$.

We now consider the small vertices. Let $V_\s$ be the set of small vertices that
have a large neighbor in $G_{\t_1}$. The above analysis shows that there is a zebraic
path between $v\in V_\s$ and $w\in V_\s\cup V_\l$, where $V_\l$ is the set of large vertices.
Indeed if $v$ is joined by a black edge to a vertex $w\in V_\l$ then we can continue with a zebraic path that begins with a white edge and we can reach any large vertex and choose the
color of the terminating edge to be either black or white. This is useful when we need
to continue to another vertex in $V_\s$.

We now have to deal with small vertices
that have no large neighbors at time $\t_1$. It follows from Lemma \ref{lem12}(a)
that such vertices have degree one or two in $G_{\t_1}$ and that every vertex at distance two
from such a vertex is large.
\begin{lemma}\label{lem14}
All vertices of degree at most two in $G_{t_0}$ are w.h.p.
at distance greater than 10 in $G_{t_1}$,
\end{lemma}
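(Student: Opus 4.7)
The plan is a first-moment calculation in the process model, closely modelled on Lemma \ref{lem0}(b) but adapted to handle the two graphs $G_{t_0}$ and $G_{t_1}$ simultaneously. Let $Z$ count the number of ordered pairs of distinct vertices $(x,y)$ with $d_{G_{t_0}}(x), d_{G_{t_0}}(y) \leq 2$ and $d_{G_{t_1}}(x,y) \leq 10$; it suffices to show $\E Z = o(1)$, whereupon Markov gives the lemma.

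For each $k \in \{1,2,\ldots,10\}$ I would union-bound over ordered $(k+1)$-tuples of distinct vertices $(x = v_0, v_1, \ldots, v_{k-1}, v_k = y)$, writing $P$ for the resulting set of $k$ path edges and $A_x, A_y$ for the disjoint $(n-k-1)$-element sets of pairs from $x,y$ to $[n]\setminus\{v_0,\ldots,v_k\}$. Since $d_{G_{t_0}}(x) \leq 2$ forces $|A_x \cap E_{t_0}| \leq 2$, and similarly for $y$, the contribution of each tuple is at most
$$\Pr\!\left( P \subseteq E_{t_1},\ |A_x \cap E_{t_0}| \leq 2,\ |A_y \cap E_{t_0}| \leq 2 \right).$$

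To bound this I would use the two-step description of the process ($E_{t_1}$ a uniform $t_1$-subset of $\binom{[n]}{2}$, then $E_{t_0}$ a uniform $t_0$-subset of $E_{t_1}$) and condition on $\{P \subseteq E_{t_1}\}$, an event of probability $\binom{N-k}{t_1-k}/\binom{N}{t_1} = (1+o(1)) p_1^k$. Given this conditioning, each edge $e \in A_x \cup A_y$ (all disjoint from $P$) lies in $E_{t_0}$ with probability $(1+o(1))p_0$, and the indicators $\mathbf{1}[e \in E_{t_0}]$ for $e \in A_x \cup A_y$ form a negatively associated family. Because $\{|A_x \cap E_{t_0}| \leq 2\}$ and $\{|A_y \cap E_{t_0}| \leq 2\}$ are decreasing events on disjoint index sets, the negative-association product bound gives
$$\Pr\!\left(|A_x \cap E_{t_0}| \leq 2,\ |A_y \cap E_{t_0}| \leq 2 \,\bigm|\, P \subseteq E_{t_1}\right) \leq (1+o(1))\left(\sum_{i=0}^{2}\binom{n-k-1}{i} p_0^i (1-p_0)^{n-k-1-i}\right)^{\!2}.$$
Using $np_0 = \log n - 2\log\log n$, the $i=2$ term dominates and the bracket is $O(\log^4 n / n)$. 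Each tuple therefore contributes at most $O(p_1^k \log^8 n / n^2)$, and summing over the at most $n^{k+1}$ tuples and over $k$:
$$\E Z \leq \sum_{k=1}^{10} n^{k+1}\cdot (1+o(1)) p_1^k \cdot O\!\left(\frac{\log^8 n}{n^2}\right) = O\!\left(\frac{\log^{18} n}{n}\right) = o(1).$$

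The main obstacle is that the target event is non-monotone in the edges of the process (the degree-$\leq 2$ condition is decreasing while the distance-$\leq 10$ condition is increasing), so \eqref{mon} cannot be applied directly. The fix is exactly the two-step sampling above: conditioning on $P \subseteq E_{t_1}$ reduces the low-degree question to a standard hypergeometric tail estimate on a uniform $t_0$-subset of $E_{t_1}$, and the two tail estimates for $A_x$ and $A_y$ decouple through negative association.
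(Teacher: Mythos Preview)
Your proposal is correct and follows the same first-moment strategy as the paper: union bound over short paths in $G_{t_1}$ times the probability that both endpoints have small $G_{t_0}$-degree. The paper's one-line version invokes \eqref{notmon} (paying a harmless $t_1^{1/2}$ factor) and writes the estimate with mixed $p_0,p_1$ parameters, whereas you stay in the process via two-step sampling; one small technicality worth tightening is that the negative-association claim for $\mathbf{1}[e\in E_{t_0}]$ conditioned on $P\subseteq E_{t_1}$ is cleanest after further conditioning on $|E_{t_0}\cap P|=j$, since then $E_{t_0}\setminus P$ is a uniform $(t_0-j)$-subset of $\binom{[n]}{2}\setminus P$ and NA applies directly, with the resulting bound uniform over $j\le k\le 10$.
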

\proofstart
Simpler than Lemma \ref{lem0}(b). We use \eqref{notmon} and then
$$\Pr(\exists \text{ such a pair of vertices})\leq_b t_1^{1/2}\sum_{k=0}^9
n^k p_1^{k-1}\brac{(1-p_0)^{n-k-1}+(n-k)p_0(1-p_0)^{n-k-2}}^2=o(1).$$
\proofend
Let $Z_i$ be the number of vertices of degree $0\leq i\leq 2$ in $G_{t_0}$
that are adjacent in $G_{\t_1}$ to small vertices that are
themselves only incident to edges of one color. Lemma \ref{lem12}(a) implies that 
\beq{eq15}
Z_2=0\ w.h.p.
\eeq

Now consider the case $i=1$. Here we let $Z_1'$ be the number of
vertices of degree one in $G_{t_0}$
that are adjacent in $G_{t_0}$ to vertices that are
themselves only incident to edges of one color. Note that $Z_1\leq Z_1'$.
Then we have, with the aid of \eqref{binom},
\begin{align}
\E(Z_1')&\leq n\binom{n-1}{1}\frac{\binom{N-n+1}{t_0-1}}{\binom{N}{t_0}}
\sum_{{k=1}}^{n-2}\binom{n-2}{k}
\frac{\binom{N-2n+3}{t_0-1-k}}{\binom{N-n+1}{t_0-1}}2^{-(k-1)}.\label{eq11}\\
&\leq_b n^2\frac{t_0}{N}\bfrac{N-t_0}{N-1}^{n-2}\sum_{{k=1}}^{n-2}\binom{n-2}{k} 2^{-k}\bfrac{t_0-1}{N-n+1}^{k}
\bfrac{N-n-t_0+2}{N-n-k+1}^{n-2-k}\nonumber\\
&\leq_b n\log n \exp\set{-\frac{(n-2)(t_0-1)}{N-1}}\sum_{{k=1}}^{n-2}
\binom{n-2}{k}\bfrac{t_0-1}{2(N-n+1)}^k\bfrac{N-n-t_0+2}{N-n-k+1}^{n-2-k}\nonumber\\
&\leq n\log n \exp\set{-\frac{(n-2)(t_0-1)}{N-1}}\sum_{{k=1}}^{n-2}
\binom{n-2}{k}\bfrac{t_0}{2(N-n)}^k\bfrac{N-n-2t_0/3}{N-n}^{n-2-k}\nonumber\\
&\leq_b  \log^3n \brac{\frac{t_0}{2(N-n)}+\frac{N-n-2t_0/3}{N-n}}^{n-2}\nonumber\\
&\leq  \log^3n \bfrac{N-t_0/6}{N-n}^{n-2}\nonumber\\
&=o(1).\label{eq14}
\end{align}
{\bf Explanation for \eqref{eq11}:} We choose a vertex $v$ of degree one
and its neighbor $w$ in $n\binom{n-1}{1}$ ways.
The probability that $v$ has degree one is $\frac{\binom{N-n+1}{t_0-1}}{\binom{N}{t_0}}$.
We fix the
degree of $w$ to be $k+1$. This now has probability
$\frac{\binom{N-2n+3}{t_0-k-1}}{\binom{N-n+1}{t_0-1}}$. The
final factor $2^{-(k-1)}$ is the probability that $w$ only sees edges of one color.

\ignore{
\medskip
In order to deal with $Z_2'$, we next eliminate the possibility of a vertex of degree two in $G_{t_0}$
being in a triangle of $G_{t_1}$. First, using \eqref{mon}, the expected number of
vertices of degree two in $G_{t_0}$ is at most
$$3n\binom{n-1}{2}p_0^2\brac{1-p_0}^{n-3}=O(\log^4n).$$
So, w.h.p. there are fewer than $\log^5n$.

Using \eqref{notmon}, we see that
the expected number of triangles of $G_{t_0}$ containing a vertex of degree two is at most
$$O(t_0^{1/2})\times O(\log^4n)\times n^3p_0^3(1-p_0)^{n-3}=o(1).$$
So, w.h.p. there are no such triangles.

Then the probability that there is an edge of $G_{t_1}-G_{t_0}$ that
joins the two neighbors of a vertex of degree two in $G_{t_0}$ is at most
$$o(1)+\log^5n\times \frac{t_1-t_0}{N}=o(1).$$
Now we can proceed to estimate $\E(Z_2')$, ignoring the possibility
of such a triangle. In which case,
\begin{align}
&\E(Z_2')\nonumber\\
&\leq_b n\binom{n-1}{2}
\frac{\binom{N-n+1}{t_0-2}}{\binom{N}{t_0}}\sum_{k,l=0}^{n-3}
\binom{n-3}{k}\binom{n-3}{l}
\frac{\binom{N-3n+6}{t_0-2-k-l}}{\binom{N-n+1}{t_0-2}}2^{-k-l}\label{eq15}\\
&\leq n^3 \bfrac{t_0}{N}^2\bfrac{N-t_0}{N-2}^{n-3}\times \nonumber\\
&\gap{1}\sum_{k,l=0}^{n-3}
\binom{n-3}{k}\binom{n-3}{l}\bfrac{t_0-2}{N-n+1}^{k+l}\bfrac{N-n-t_0+3}
{N-n-k-l+1}^{2n-5-k-l}2^{-k-l}
\nonumber\\
&\leq_b \log^4n \brac{\sum_{k=0}^{n-3}\binom{n-3}{k}\bfrac{t_0}{2(N-n)}^{k}
\bfrac{N-t_0}{N-2n}^{n-3-k}}^2\nonumber\\
&\leq \log^4n\brac{1-\frac{t_0-2}{2(N-2n)}}^{2(n-3)}\nonumber\\
&=o(1).\label{eq16}
\end{align}
}

\medskip

Finally, consider $Z_0$. Condition on $G_{t_0}$ and assume that
Properties (c),(d) of Lemma \ref{lem12}
hold. For a given isolated vertex, the first $G_{t_0}$ edge incident with it
will have a random endpoint. It follows immediately
that
\beq{eq17}
\Pr(Z_0>0)\leq o(1)+\log^3n\times  \frac{n^{2/3}}{n}=o(1).
\eeq
Here the $o(1)$ accounts for Properties (c),(d) of Lemma \ref{lem12} and
$\log^3n\times n^{-1/3}$ bounds the expected
number of ``first edges'' that choose small endpoints.

Equations \eqref{eq15}, \eqref{eq11} and \eqref{eq17} show that
$Z_0+Z_1+Z_2=0$ w.h.p. In which case it
will be possible to find zebraic paths starting from small vertices. Indeed, we now know that w.h.p. any small vertex $v$ will be adjacent to a vertex $w$ that is incident with edges of both colors and that any other neighbor of $w$ is large.
\section{Proof of Theorem \ref{th5}}\label{th5proof}
The case $r=2$ is implied by Corollary \ref{cor1}. This follows from Corollary \ref{cor1} and \eqref{tau11}. So we can assume that $r\geq3$.
\subsection{$p\leq (1-\e)p_r$}
For a vertex $v$, let
\begin{align*}
C_v&=\set{i:v\text{ is incident with an edge of color $i$}}.\\
I_v&=\set{i:\set{i,i+1}\subseteq C_v}.\qquad{\text{($r+1=1$ here.)}}
\end{align*}
Let $v$ be \emph{bad} if $I_v=\emptyset$. The existence
of a bad vertex means that there are no $r$-zebraic Hamilton cycles. Let $Z_B$
denote the number of bad vertices. Now if $r$ is odd
and $C_v\subseteq \set{1,3,\ldots,2\rdown{r/2}-1}$
or $r$ is even and $C_v\subseteq \set{1,3,\ldots,r-1}$ then
$I_v=\emptyset$. Hence,
$$\E(Z_B)\geq n\brac{1-\frac{\a_rp}{r}}^{n-1}=n^{\e-o(1)}\to\infty.$$
A straightforward second moment calculation shows that $Z_B\neq 0$ w.h.p. and this proves the
first part of the theorem.
\subsection{$p\geq (1+3\e)p_r$}
Note the replacement of $\e$ by $3\e$ here, for convenience. Note also that $\e$ is assumed to be sufficiently small for some inequalities below to hold.

Write $1-p=(1-p_1)(1-p_2)^2$ where $p_1=(1+\e)p_r$ and $p_2\sim\e p_r$ .  Thus
$G_{n,p}$ is the union of $G_{n,p_1}$ and two independent copies of $G_{n,p_2}$.
If an edge appears more than once in $G_{n,p}$, then it retains the color of its first occurence.

Now for a vertex $v$ let $d_i(v)$ denote the number of edges of color $i$ incident
with $v$ in $G_{n,p_1}$. Let
$$J_v=\set{i:d_i(v)\geq \eta_0\log n}$$
where $\eta_0=\e^2/r$.

Let $v$ be \emph{poor} if $|J_v|<\b_r$ where
$\b_r=\rdown{r/2}+1$. Observe that $\a_r+\b_r=r+1$. Then let $Z_P$ denote the number
of poor vertices in $G_{n,p_1}$. A simple calcluation shows that w.h.p. the minimum
degree in $G_{n,p_1}$ is at least $L_0$ and that the maximum degree is at most $6\log n$.
Then
\begin{align*}
\Pr(Z_P>0)&\leq o(1)+n\sum_{k=L_0}^{{6\log n}}\binom{n-1}{k}p_1^k(1-p_1)^{n-1-k}
\sum_{l=r-\b_r+1}^r\binom{r}{l}\binom{k}{l\eta_0\log n}\brac{1-\frac{l}{r}}^{k-r\eta_0\log n}\\
&\leq o(1)+n
\sum_{k=0}^{{6\log n}}\binom{n-1}{k}p_1^k(1-p_1)^{n-1-k}2^r\binom{6\log n}{r\eta_0\log n}\bfrac{\b_r-1}{r}^{k}
\bfrac{r}{\b_r-1}^{r\eta_0\log n}\\
&=o(1)+n2^r\binom{6\log n}{r\eta_0\log n}\bfrac{r}{\b_r-1}^{r\eta_0\log n}\sum_{k=0}^{{6\log n}} (1-p_1)^{n-1}\binom{n-1}{k}\bfrac{p_1(\b_r-1)}{r(1-p_1)}^k\\
&\leq o(1)+2^rn^{1+r\eta_0\log(6e/\eta_0)}(1-p_1)^{n-1}\brac{1+\frac{(\b_r-1)p_1}{r(1-p_1)}}^{n-1}\\
&\leq o(1)+2^rn^{1+r\eta_0\log(6e/\eta_0)}\brac{1-\frac{{(1+o(1))}\a_rp_1}{r}}^{n-1}\\
&=o(1).
\end{align*}
We can therefore assert that w.h.p. there are no poor vertices. This means that
\beq{Kv}
K_v=\set{i:d_i(v),{d_{i-1}(v)}\geq \eta_0\log n}\neq \emptyset\text{ for all }v\in [n].
\eeq
The proof now follows our general 3-phase procedure of (i) finding an $r$-zebraic
2-factor, (ii) removing small cycles so that we have a 2-factor in which
every cycle has length $\Omega(n/\log n)$ and then (iii) using a second
moment calculation to show that this 2-factor can be re-arranged into
an $r$-zebraic Hamilton cycle.

\subsubsection{Finding an $r$-zebraic 2-factor}
We partition $[n]$ into $r$ sets $V_i=[(i-1)n/r+1,in]$ of size $in/r$. Now for each $i$ and each vertex $v$ let
$${N_i(v)=\set{w:\set{v,w}\text{ is an edge of }G_{n,p_1}\text{ of color }i}}.$$
$$d_i^+(v)=|V_{i+1}\cap N_i(v)|\text{ and }d_i^-(v)=|V_{i-1}\cap N_{i-1}(v)|.$$
(Here $r+1$ is interpreted as 1 and 1-1 is interpreted as $r$).

We now let a vertex $v\in V_i$ be $i$-\emph{large} if $d_i^+(v),d_i^-(v)\geq \eta\log n$
where $\eta=\min\set{\eta_0,\eta_1,\eta_2}$ and $\eta_1$ is the solution to
$$\eta_1\log\bfrac{e(1+\e)}{r\eta_1\a_r}=\frac{1}{r\a_r}$$
and $\eta_2$ is the solution to
$$\eta_2\log\bfrac{3er(1+\e)}{\eta_2\a_r}=\frac{1}{3\a_r}.$$
Let $v$ be \emph{large} if it is $i$-large for all $i$.
Let $v$ be \emph{small} otherwise. (Note that $d_i^+(v),d_i^-(v)$ are defined for all $v$, not just for $v\in V_i$, $i\in[r]$).

Let $V_\l,V_\s$ denote the sets of large and small vertices respectively.
\begin{lemma}\label{lem15}
W.h.p., in $G_{n,p_1}$,
\begin{description}
\item[(a)] $|V_\s|\leq n^{1-\th}$ where $\th=\frac{\e}{2r\a_r}$.
\item[(b)] No connected subset of size at most $2\log\log n$ contains more
than $\m_0=r\a_r$ members of $V_\s$.
\item[(c)] If $S\subseteq [n]$ and $|S|\leq n_0=n/\log^2n$ then
$e(S)\leq 100|S|$.
\end{description}
\end{lemma}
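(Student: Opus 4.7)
My plan is to treat each of (a), (b), (c) as a first-moment estimate in $G_{n,p_1}$, all built around a single binomial lower-tail calculation. Since $d_i^+(v) \sim \Bin(n/r, p_1/r)$ has mean $\l = (1+\e)\log n/(r\a_r)$, the standard bound $\Pr[\Bin(N,q) \leq m] \leq (1+o(1))\binom{N}{m}q^m(1-q)^{N-m}$ (valid when $m$ is below the mean) yields, for any $\eta < \l/\log n$,
$$\Pr\brac{d_i^{\pm}(v) \leq \eta\log n} \leq n^{f(\eta) - (1+\e)/(r\a_r) + o(1)}, \quad f(\eta) \eqdef \eta\log\bfrac{e(1+\e)}{r\eta\a_r}.$$
The implicit equations defining $\eta_1$ and $\eta_2$ are tailored so that $f(\eta_1) = 1/(r\a_r)$ and $f(\eta_2) = 1/(3\a_r) - \eta_2\log(3r^2)$; since $\eta \leq \min\set{\eta_1,\eta_2}$, both bounds will be available.

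For (a), a union bound over the $2r$ choices of $(i,\pm)$ gives $\Pr[v\in V_\s] \leq 2r\cdot n^{-\e/(r\a_r)+o(1)}$ via the monotonicity $f(\eta)\leq f(\eta_1)=1/(r\a_r)$ on $(0,\l/\log n)$. Hence $\E|V_\s|\leq n^{1-\e/(r\a_r)+o(1)}$, and Markov's inequality delivers $|V_\s|\leq n^{1-\th}$ w.h.p.\ with $\th=\e/(2r\a_r)$.

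For (b), I enumerate a bad configuration by its size $k\in[\m_0+1,\log\log n]$, its vertex set ($\binom{n}{k}$), a spanning tree ($k^{k-2}$ Cayley choices), and a designated $(\m_0+1)$-subset destined to be small ($\binom{k}{\m_0+1}$). Because ``$v_j$ is small'' depends only on the (random-color) edges incident to $v_j$, after restricting each $d_i^{\pm}(v_j)$ to ignore the $O(\log\log n)$ tree edges the ``small'' events become independent across $j$, with each event only a $(1+o(1))$-factor likelier than the unrestricted one. Using the sharper per-vertex bound $\Pr[v\in V_\s]\leq n^{-c+o(1)}$ coming from $\eta\leq\eta_2$, the expected count becomes
$$\sum_k \binom{n}{k}k^{k-2}p_1^{k-1}\binom{k}{\m_0+1}\brac{\Pr[v\in V_\s]}^{\m_0+1} \leq \sum_k n(np_1)^{k-1}\, n^{-c(\m_0+1)+o(1)},$$
and since $np_1=O(\log n)$ and $k\leq\log\log n$ the prefactor is $n^{1+o(1)}$, while the choice of $\eta_2$ is calibrated so that $c(\m_0+1)>1$, making each summand (and the whole $O(\log\log n)$-term sum) $o(1)$.

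Part (c) is a routine union bound paralleling Lemma \ref{lem0}(c). Applying \eqref{mon} in $G_{n,p_1}$ with $p_1=O(\log n/n)$,
$$\Pr(\exists\ S) \leq \sum_{s=101}^{n_0}\binom{n}{s}\binom{\binom{s}{2}}{100s}p_1^{100s} \leq \sum_s \brac{\frac{ne}{s}\brac{\frac{seCp_1}{200}}^{100}}^s,$$
and since $(s/n)^{99}(\log n)^{100} = O((\log n)^{-98})$ for $s\leq n_0 = n/\log^2 n$, each summand is uniformly $o(1)$, so the total is $o(1)$. The main technical obstacle, concentrated in (b), is pinning down the Chernoff exponent precisely enough that $\Pr[v\in V_\s]$ raised to the power $\m_0+1=r\a_r+1$ defeats both the $n$ from enumerating vertex sets and the $(\log n)^{k-1}$ from tree-edge probabilities---this is exactly the calibration accomplished by the implicit equation defining $\eta_2$.
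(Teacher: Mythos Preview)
Your overall approach matches the paper's for all three parts, and your treatments of (a) and (c) are correct and essentially identical to the paper's.

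Part (b) has the right structure --- first moment over connected sets, enumerated via a spanning tree and a designated subset of small vertices, with the per-vertex ``small'' probability bounded by the binomial tail from (a) --- but your numerical justification contains a genuine gap. You assert that ``the choice of $\eta_2$ is calibrated so that $c(\m_0+1)>1$''. This is not what $\eta_2$ is for: the implicit equation $\eta_2\log\bigl(3er(1+\e)/(\eta_2\a_r)\bigr)=1/(3\a_r)$ is tailored to the perfect-matching calculation of Lemma~\ref{lem16}, where the factor $\bigl(3er(1+\e)/(\a_r\eta)\bigr)^{\eta\log n/2}$ appears, and it has nothing to do with the present estimate. Indeed, the bound $\eta\le\eta_2$ alone does \emph{not} yield $c(\m_0+1)>1$; for example, with $r=3$ (so $\a_r=2$, $\m_0+1=7$) and $\e$ small, one has $\eta_2\approx 0.027$ and $c(\m_0+1)\approx 7\eta_2\log 27 + 7\e/6 < 1$.

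The constraint that actually does the work is $\eta\le\eta_0=\e^2/r$, combined with the paper's standing assumption that $\e$ is sufficiently small. One checks directly that
\[
f(\eta_0)=\frac{\e^2}{r}\log\frac{e(1+\e)}{\e^2\a_r}<\frac{\e}{r\a_r}
\]
once $\e\a_r\log(e(1+\e)/(\e^2\a_r))<1$, which holds for all small $\e$. This gives $c=(1+\e)/(r\a_r)-f(\eta)>1/(r\a_r)$, hence $c\cdot\m_0>1$ (the paper in fact takes $\m_0$ designated small vertices, proving the slightly stronger statement that no such connected set contains even $\m_0$ of them). With your choice of $\m_0+1$ designated vertices the same bound on $f(\eta_0)$ gives $c(\m_0+1)>1+1/(r\a_r)$, so your argument is easily repaired: simply replace the appeal to $\eta_2$ by the correct appeal to $\eta_0$.
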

\proofstart\\
(a) If $v\in V_\s$ then there exists $i$ such that $d_i^+(v)\leq \eta\log n$
or $d_i^-(v)\leq \eta\log n$. So we have
\begin{align}
\E(|V_\s|)&\leq 2rn\sum_{k=0}^{\eta\log n}\binom{n/r}{k}\bfrac{p_1}{r}^k
\brac{1-\frac{p_1}{r}}^{n/r-k}\label{MK}\\
&\leq3r\bfrac{(1+\e)e}{r\eta\a_r}^{\eta\log n}n^{1-(1+\e+o(1))/r\a_r}\label{35}\\
&\leq n^{1-2\th+o(1)}.
\end{align}
Part (a) follows from the Markov inequality. Note that we can lose the factor 2 in \eqref{MK} since $d^+_i(v)=d^-_{i+2}(v)$.

(b) 
The expected number of connected sets $S$ of size at most $2\log\log n$ containing
$\m_0$ members of $V_\s$ can be bounded by
\beq{206}
\sum_{s=\m_0}^{2\log\log n}\binom{n}{s}s^{s-2}p_1^{s-1}
\binom{s}{\m_0}
\brac{r\sum_{k=0}^{\eta\log n}\binom{n/r-s}{k}\bfrac{p_1}{r}^k
\brac{1-\frac{p_1}{r}}^{n/r-s-k}}^{\m_0}.
\eeq
{\bf Explanation:} We choose $s$ vertices for $S$ and a tree to connect
up the vertices of $S$. We then choose $\m_0$ members $A\subseteq S$
to be in $V_\s$. We multiply by the probability that
for each vertex in $A$, there is at least one $j$ such that $v$ has few
neighbors in $V_{j}\setminus S$ connected to $v$ by edges of color $j$.

After bounding the the sum in brackets raised to $\m_0$ as in \eqref{35}, the sum in \eqref{206} can be bounded by
$$n\sum_{s=\m_0}^{2\log\log n}(4e\log n)^s n^{-\m_0(1+\e+o(1))/r\a_r}=o(1).$$
(c) This is proved in the same manner as Lemma \ref{lem0}(c).
\proofend
For $v\in V_\s$ we let $\f(v)=\min \set{i:i\in K_v}$. Equation \eqref{Kv} implies that $\f(v)$ exists for all $v\in[n]$. Then let $X_i=\set{v\in V_\s:\f(v)=i}$ for $i\in[r]$ and 
$$Y_i=\set{w\notin V_\s:\exists v\in V_\s,s.t.\ (\f(v)=i-1,w\in N_{i-1}(v))\text{ or }(\f(v)=i+1,w\in N_{i}(v))}.$$
It is possible that a vertex $w$ lies in more than one $Y_i$. In which case, delete it from all but one of them. Now let
$$W_i=(V_i\setminus V_\s)\cup X_i\cup Y_i,\quad  i=1,2,\ldots,r.$$
Suppose that $w_i=|W_i|-n/r$ for $i\in [r]$ and let $w_i^+=\max\set{0,w_i}$ for $i\in [r]$. We now remove $w_i^+$ randomly chosen large vertices from each $W_i$ and then randomly assign
$w_i^-=-\min\set{0,w_i}$ of them to each $W_i,i\in [r]$. Thus we obtain a partition of $[n]$ into $r$ sets $Z_i,i=1,2,\ldots,r$, of size $n/r$ for $i\in[r]$.

Let $H_i$ be the bipartite graph induced by $Z_i,Z_{i+1}$ and the edges of color $i$ in $G_{n,p_1}$. We now argue that
\begin{lemma}\label{lemH}
$H_i$ has minimum degree at least $\frac12\eta\log n$ w.h.p.
\end{lemma}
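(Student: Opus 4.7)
My plan is to prove the minimum-degree bound for $H_i$ by a short direct estimate that combines the definitions of ``large'' and $\f$ with Lemma~\ref{lem15}(b). The key observation is that every vertex $v\in W_i$ is $i$-large: the decomposition $W_i=(V_i\setminus V_\s)\cup\{v\in V_\s:\f(v)=i\}$ shows that either $v$ is large (and hence $j$-large for every $j$, in particular for $j=i$) or $v\in V_\s$ with $\f(v)=i$, which is precisely the statement that $v$ is $i$-large. Since $d_i^{\pm}$ is defined for every vertex irrespective of block membership, we obtain at least $\eta\log n$ edges from $v$ of the color used in $H_i$ going into $V_{i+1}$.

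Next I would argue that essentially none of these edges are lost in the passage from $V_{i+1}$ to $W_{i+1}$. The vertices of $V_{i+1}\setminus W_{i+1}$ are exactly the small ones, $V_{i+1}\cap V_\s$, and vertices of $W_{i+1}\setminus V_{i+1}$ are small vertices with $\f=i{+}1$ added from other blocks, which can only increase $\deg_{H_i}(v)$. Consequently
$$
\deg_{H_i}(v)\ \geq\ \eta\log n\ -\ |N_{G_{n,p_1}}(v)\cap V_\s|,
$$
and it suffices to show $|N_{G_{n,p_1}}(v)\cap V_\s|\leq \m_0=r\a_r$. If this failed, then $\{v\}\cup\bigl(N_{G_{n,p_1}}(v)\cap V_\s\bigr)$ would be a connected subgraph of $G_{n,p_1}$ of size at most $\m_0+2=O(r^2)\leq \log\log n$ containing at least $\m_0+1$ members of $V_\s$, contradicting Lemma~\ref{lem15}(b). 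Combining,
$$
\deg_{H_i}(v)\ \geq\ \eta\log n-\m_0\ \geq\ \tfrac{1}{2}\eta\log n
$$
for all sufficiently large $n$, and a union bound over $v\in W_i$ and $i\in[r]$ (already absorbed into the high-probability conclusion of Lemma~\ref{lem15}) completes the proof.

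I expect no serious obstacle here: all the heavy lifting has been done inside Lemma~\ref{lem15}, and what remains is essentially a two-line calculation. The only piece of bookkeeping to be careful about is that a vertex $v\in W_i\setminus V_i$ is necessarily small (it lies in the second summand of the definition of $W_i$), so its $i$-largeness must be read off from the defining condition $\f(v)=i$ rather than from being a ``large'' vertex. Once that is checked, no further complication arises.
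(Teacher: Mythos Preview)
Your argument covers only half of the construction. Recall that immediately before defining $H_i$ the paper performs a \emph{random re-shuffling}: from each $W_i$ one removes $w_i^+$ randomly chosen \emph{large} vertices and redistributes them so that all parts have size exactly $n/r$ (the resulting sets are called $Z_i$, and $H_i$ is the bipartite graph on these balanced parts---this is what makes the perfect matching in Lemma~\ref{lem16} possible). Your decomposition $W_i=(V_i\setminus V_\s)\cup\{v\in V_\s:\f(v)=i\}$ and your claim that ``the vertices of $V_{i+1}\setminus W_{i+1}$ are exactly the small ones'' are only valid \emph{before} this re-shuffling; after it, $V_{i+1}\setminus W_{i+1}$ also contains up to $w_{i+1}^+\le |V_\s|\le n^{1-\th}$ large vertices, and these cannot be controlled by Lemma~\ref{lem15}(b), which concerns only small vertices.

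Concretely, a vertex $v$ may lose neighbors in two ways: (i) small neighbors in $V_{i+1}$ that land in another $W_j$ (your Lemma~\ref{lem15}(b) argument correctly bounds this by $\m_0$), and (ii) large neighbors in $W_{i+1}$ that are randomly evicted during re-shuffling. For (ii) the paper gives a separate probabilistic estimate: conditioned on $v$ having $d=\Theta(\log n)$ relevant neighbors in $W_{i+1}$, the number lost is stochastically dominated by $\Bin\!\big(w_{i+1}^+,\,\tfrac{d}{n/r-w_{i+1}^+}\big)$, which is at most $2/\th$ with probability $1-o(n^{-1})$. You need this (or an equivalent) step; without it the bound $\deg_{H_i}(v)\ge \eta\log n-\m_0$ is simply not established for the graph $H_i$ that is actually used downstream. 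Your remark that ``a vertex $v\in W_i\setminus V_i$ is necessarily small'' is likewise false after re-shuffling (large vertices get moved in as well), though this particular slip is harmless since large vertices are $i$-large for every $i$.
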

\proofstart
It follows from Lemma \ref{lem15}(b),(d) that no vertex in $Z_i\cap V_i$ loses more than $\m_0$ neighbors from the deletion of $V_\s$ or from the movement of the vertices in the $Y_i$'s. Also, we move $v\in V_\s$ to a $Z_i$ where it has degree at least $\eta\log n-\m_0$ in $V_{i-1}$ and $V_{i+1}$. Its neighborhood may have been affected by the deletion of $V_\s$ or the movement of the $Y_i$'s, but only by at most $\m_0$. Thus for every $i$ and $v\in X_i$, $v$ has at least $\eta\log n-\m_0$ neighbors in $Z_{i-1}$ connected to $v$ by an edge of color $i-1$ and at least $\eta\log n-\m_0$ neighbors in $Z_{i+1}$ connected to $v$ by an edge of color $i$

Now consider the random re-shuffling to get sets of size $n/r$. Fix a $v\in V_i$. Suppose that
it has $d=\Theta(\log n)$ neighbors in $Z_{i+1}$ connected by an edge of color {$i$}. Now randomly choose $w_{i+1}^+=O(|V_\s|\log n)$ vertices to delete from $Z_{i+1}$. The number $\n_v$ of neighbors of $v$ chosen is dominated by $\Bin\brac{{w_{i+1}^+,\frac{d}{n/r}}}$. This follows from the fact that if we choose these ${w_{i+1}^+}$ vertices one by one, then at each step, the chance that the chosen vertex is a neighbor of $v$ is bounded from above by ${\frac{d}{n/r}}$.
So, given the condition in Lemma \ref{lem15}(a) we have
$$\Pr(\n_v\geq 2/\th)\leq \binom{n^{1-\th+o(1)}}{2/\th}\bfrac{dr}{{n}}^{2/\th}
\leq \bfrac{n^{1-\th+o(1)}edr\th}{n}^{2/\th}=o(n^{-1}).$$
\proofend
We can now verify the existence of perfect matchings w.h.p.
\begin{lemma}\label{lem16}
W.h.p., each $H_i$ contains a perfect matching $M_i,i=1,2,\ldots,r$.
\end{lemma}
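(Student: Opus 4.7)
\textbf{Proof proposal for Lemma~\ref{lem16}.}
The plan is to verify Hall's condition in each bipartite graph $H_i$ and then take a union bound over the $r$ (constant) values of $i$. Fix $i$ and write $A=W_i$, $B=W_{i+1}$. We condition on the high-probability events of Lemmas~\ref{lem15} and \ref{lemH}; in particular every vertex has degree at least $\tfrac12\eta\log n$ in $H_i$ and every set of size at most $n_0=n/\log^2n$ spans at most $100|S|$ edges of $G_{n,p_1}$ (a fortiori, of $H_i$). Since $|A|=|B|=n/r$, it suffices to show that no set $S\subseteq A$ with $|S|\le n/(2r)$ satisfies $|N_{H_i}(S)|<|S|$; by symmetry the same bound on the $B$-side will then give a perfect matching.

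For \emph{small} sets $|S|\le n_0/2$, suppose for contradiction that $T=N_{H_i}(S)$ has $|T|\le |S|-1$. Every edge out of $S$ in $H_i$ lands in $T$, so
\[
e(S\cup T)\ \ge\ |S|\cdot\tfrac12\eta\log n,
\]
while $|S\cup T|\le 2|S|\le n_0$. Lemma~\ref{lem15}(c) then forces
$\tfrac12\eta|S|\log n \le 100\cdot 2|S|$, which is false for $n$ large. So Hall's condition cannot fail on a small set.

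For \emph{medium} sets $n_0/2 \le s \le n/(2r)$, I would run a direct union bound on pairs $(S,T)$ with $S\subseteq A$, $T\subseteq B$, $|S|=s$, $|T|=s-1$, such that no color-$i$ edge of $G_{n,p_1}$ joins $S$ to $B\setminus T$. Each edge of $G_{n,p_1}$ receives color $i$ independently with probability $1/r$, so the probability that the $s(n/r-s+1)$ relevant slots are all devoid of color-$i$ edges is at most $(1-p_1/r)^{s(n/r-s+1)}$. The relevant bound is
\[
\sum_{s=n_0/2}^{n/(2r)}\binom{n/r}{s}\binom{n/r}{s-1}\exp\!\left\{-\frac{p_1}{r}\,s\!\left(\frac{n}{r}-s+1\right)\right\},
\]
and since $p_1=(1+\e)p_r$ with $p_r(n/r)=\frac{r}{\a_r}\cdot\frac{\log n}{\a_r}\cdot\frac{1}{r}\cdot n \gg \log n$, the exponent dominates the binomial entropy for every $s$ in the range; a standard term-by-term estimate (splitting the range at, say, $n/\log n$ to control $\binom{n/r}{s}\le(ne/s)^s$ versus $2^{n/r}$) shows the sum is $o(1)$. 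A symmetric argument on the $B$-side covers $|T|>n/(2r)$.

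The main obstacle I foresee is purely bookkeeping: the set $W_i$ is produced by a random reshuffle that depends on $V_\s$, so the color-$i$ edges of $H_i$ are not quite an i.i.d.\ Erd\H os--R\'enyi sample on $W_i\times W_{i+1}$. This is handled by conditioning on the partition $(W_1,\ldots,W_r)$: given the partition, each potential edge between $W_i$ and $W_{i+1}$ is independently present-with-color-$i$ with probability $p_1/r$, because the partition is a function of $V_\s$ (a structural event on $G_{n,p_1}$) and not of the colors per se, and the only edges ``touched'' by the reshuffling involve at most $|V_\s|\le n^{1-\th}$ vertices, a negligible perturbation that can be absorbed into the $o(1)$ slack of the union bound. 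Once this independence is cleaned up, the two regimes above give the perfect matching in each $H_i$, and a union bound over $i\in[r]$ finishes the lemma.
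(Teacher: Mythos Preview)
Your overall strategy matches the paper's: verify Hall's condition in $H_i$, kill small sets via Lemma~\ref{lem15}(c) together with the minimum-degree bound of Lemma~\ref{lemH}, handle the middle range $n_0\le s\le n/(2r)$ by a first-moment union bound, and finish the large range by symmetry on the $B$-side. The small-set argument is correct as written.

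The gap is exactly where you anticipate it, but your proposed resolution is wrong. You assert that conditioning on the partition $(W_1,\ldots,W_r)$ leaves each potential color-$i$ edge between $W_i$ and $W_{i+1}$ independently present with probability $p_1/r$, on the grounds that ``the partition is a function of $V_\s$ \ldots\ and not of the colors per se.'' But $V_\s$ \emph{is} a function of the colors: a vertex is small precisely when some $d_j^\pm(v)<\eta\log n$, and $d_j^\pm(v)$ counts edges of a specified color into the fixed block $V_{j\pm1}$. So conditioning on the partition does bias the colored edge indicators, and your union bound as stated is not over a product measure.

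The paper's way around this --- which is really what your ``negligible perturbation'' remark is reaching for --- is to transfer the union bound back to the \emph{deterministic} blocks $V_i,V_{i+1}$. If $S\subseteq W_i$ fails Hall with witness $T\subseteq W_{i+1}$, set $S_1=S\cap V_i$ and $T_1=T\cap V_{i+1}$; since $|W_j\triangle V_j|\le n_\s$ one has $|S_1|\ge s-n_\s$, and there is an exceptional set $U_1\subseteq V_{i+1}$ with $|U_1|\le n_\s$ such that there are no color-$i$ edges from $S_1$ to $V_{i+1}\setminus(T_1\cup U_1)$, while (from the minimum degree in $H_i$) there are at least $m_s=(s\eta/2-6n_\s)\log n$ such edges from $S_1$ into $T_1$. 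Now the union bound ranges over subsets of the fixed sets $V_i,V_{i+1}$, the relevant edge events are genuinely independent, and the $n_\s$ losses are absorbed into the exponents. Implementing this passage from $W_i$ to $V_i$, rather than the independence-after-conditioning claim, is what is missing from your argument.

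One small arithmetic slip: your displayed expression for the exponent is garbled (you write $p_r(n/r)=\frac{r}{\a_r}\cdot\frac{\log n}{\a_r}\cdot\frac{1}{r}\cdot n$); the correct value of $\frac{p_1}{r}\cdot\frac{n}{r}$ is $\frac{(1+\e)\log n}{r\a_r}=\Theta(\log n)$, not $\gg\log n$. This is still ample to beat the $O(\log\log n)$ per-vertex entropy cost of the binomials for $s\ge n_0/2$, so the estimate goes through once the conditioning is repaired.
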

\proofstart
Fix $i$.
We use Hall's theorem and consider the existence of a set $S\subseteq {Z_i}$
that has fewer than $|S|$ $H_i$-neighbors in ${Z_{i+1}}$. Let $s=|S|$
and let $T=N_{H_i}(S)$ and $t=|T|<s$. We can
rule out $s\leq n_0=n/{2}\log^2n$ through Lemma \ref{lem15}(c). This is because we have
$e(S\cup T)/|S\cup T|\geq \frac14\eta\log n$ in this case.
Let $n_\s=|V_\s|$ and now consider $n/{2}\log^2n\leq s\leq n/2r$.
Given such a pair $S,T$ we deduce that there exist $S_1\subseteq S\subseteq V_i,
|S_1|\geq s-n_\s$ and $T_1\subseteq T\subseteq V_{i+1}$ and
$U_1\subseteq V_{i+1},|U_1|\leq n_\s$
such that there are at least $m_s=(s\eta/2-6n_\s)\log n$ edges between $S_1$ and
$T_1$ and no edges between $S_1$ and $V_{i+1}\setminus(T_1\cup U_1)$.
There is no loss of generality in increasing the size of $T$ to $s$.
We can then write
\begin{align*}
\Pr(\exists\ S,T{\text{ in } G_{n,p_1}})&\leq \sum_{s=n_0}^{n/2r}\binom{n/r-{O(n_\s\log n)}}{s}^2 \binom{s^2}{m_s}p_1^{m_s}(1-p_1)^{(s-n_\s)(n/r-s-n_\s)}\\
&\leq \sum_{s=n_0}^{n/2r}\bfrac{ne}{rs}^{2s}\bfrac{s^2p_1e}{m_s}^{m_s}
e^{-(s-n_\s)(n/r-s-n_\s)p_1}\\
&\leq {\sum_{s=n_0}^{n/2r}}\brac{\bfrac{s}{n}^{\eta\log n/3}\bfrac{3er(1+\e)}{\a_r\eta}^{\eta\log n/2}
n^{-(1-o(1))/2\a_r}}^s\\
&=o(1).
\end{align*}
For the case $s\geq n/2r$ we look for subsets of {$Z_{i+1}$ with too few neighbors in $Z_i$}.
\proofend

It follows from symmetry considerations that the $M_i$ are independent of each other. {Indeed, once we  condition on the number of edges $m_i$ being colored $i=1,2,\ldots,r$, we find that the actual graphs induced by each color are independent of each other. What we have proved implies that for almost all sequences $m_1,m_2,\ldots,m_r$, each $H_i$ has a perfect matching.}
 
Analogously to Lemma \ref{lem2}, we have
\begin{lemma}\label{lem17}
The following hold w.h.p.:
\begin{description}
\item[(a)] $\bigcup_{i=1}^rM_i$ has at most $10\log n$ components.
(Components are $r$-zebraic cycles of length divisible by $r$.)
\item[(b)] There are at most $n_b$ vertices on
components
of size at most $n_c$.
\end{description}
\end{lemma}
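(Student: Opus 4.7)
The plan is to adapt the argument of Lemma~\ref{lem2} essentially verbatim, exploiting the fact (noted just before Lemma~\ref{lem17}) that $M_1,\ldots,M_r$ are mutually independent and each is a uniformly random perfect matching of the bipartite graph between $W_i$ and $W_{i+1}$, both of size $n/r$. I would trace the cycle $C$ of $\bigcup_i M_i$ containing vertex~$1$ (WLOG in $W_1$) by successively applying $M_1,M_2,\ldots,M_r,M_1,\ldots$. When the walk returns to $W_1$ for the $j$th time the new vertex is uniform on $W_1$ minus the $j-1$ previously visited $W_1$-vertices, and a telescoping product gives the analogue of \eqref{eq12}:
$$\Pr(|C|=rk)=\prod_{i=1}^{k-1}\frac{n/r-i}{n/r-i+1}\cdot\frac{1}{n/r-k+1}=\frac{r}{n}\qquad (k=1,\ldots,n/r),$$
so $|C|/r$ is in fact uniform on $\{1,2,\ldots,n/r\}$.

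For part (a) this immediately yields $\Pr(|C|<n/2)\leq 1/2$, and after peeling off $C$ the remaining $r$ matchings are still independent uniform random matchings on the residual bipartite graphs between $W_i\setminus C$ and $W_{i+1}\setminus C$ (each side of size $n/r-|C|/r$), so the same bound iterates. Having more than $10\log_2 n$ components then forces at least $9\log_2 n$ of the first $10\log_2 n$ extracted cycles to each use less than half of the then-remaining vertices, an event of probability at most $\Pr(\Bin(10\log_2 n,1/2)\leq \log_2 n)\leq 2^{-5\log_2 n}=o(1)$, exactly as in Lemma~\ref{lem2}(a). For part (b) the uniform length distribution gives $\Pr(|C|\leq n_c)\leq n_c/n=200/\log n$, matching the bound used in Lemma~\ref{lem2}(b), and the remainder of that proof transcribes verbatim: while at least $n_b/2$ vertices remain the number of small cycles produced is dominated by $\Bin(10\log_2 n,O(1/\log n))$ and is at most $k=\log n\cdot\log\log\log n/(1000\log\log n)$ w.h.p., putting at most $n_b/2+kn_c\leq n_b$ vertices on short components.

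The only point that requires real care is preservation of independence and uniformity of the residual matchings after $C$ is extracted; this follows from the elementary fact that conditioning a uniform random bipartite matching on a prescribed set of matched pairs gives a uniform random matching on the complement, combined with the independence of the $M_i$. The core new ingredient is therefore just the length-distribution identity above, and I do not anticipate any serious obstacle beyond what has already been handled in Lemma~\ref{lem2}.
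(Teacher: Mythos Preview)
Your proposal is correct and follows essentially the same approach as the paper. The paper's own proof is the two-line observation that following $M_1,M_2,\ldots,M_r$ around induces a permutation $\pi$ on $W_1$ which ``can be taken to be a random permutation'', after which ``the lemma follows from Lemma~\ref{lem2}''; you have simply unpacked this, deriving the exact uniform cycle-length distribution $\Pr(|C|=rk)=r/n$ on $W_1$ (the random-permutation analogue of \eqref{eq12}) and then transcribing the two parts of Lemma~\ref{lem2} accordingly.
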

\proofstart
The matchings induce a permutation $\p$ on $W_1$. Suppose that $x\in W_1$.
We follow a path via a matching edge to $W_2$ and then by a matching edge to
$W_3$ and so on until we return to a vertex $\p(x)\in W_1$. $\p$ can be taken
to be a random permutation and then the lemma follows from Lemma \ref{lem2}.
\proofend

The remaining part of the proof is similar
to that described in Sections \ref{ics}, \ref{CHC}.
We use the edges of the first copy $G_{n,p_2}$
of color 1 to make all cycles have length $\Omega(n/\log n)$
and then we use the edges of the second copy
of $G_{n,p_2}$ of color 1 to create an $r$-zebraic
Hamilton cycle. The details are left to the reader.
\section{Dealing with the directed analogs}\label{diranalog}
A great deal of the analysis we have seen extends without much comment to the directed case. In particular, in Theorem \ref{th1}, once we have a shown the existences of a matching $M_1$ that is independent of $M_0$, orientation hardly affects the proof. So for Theorem \ref{th1d} all we really need to argue for is a perfect matching $M_1=\set{g_1,g_2,\ldots,g_{n/2}}$ such that if $g_i=\set{x_i,y_i}$ then we can assume that (i) $x_i$ is odd and $y_i$ is even and (ii) $g_i$ is oriented from $y_i$ to $x_i$. For this we will apply Hall's theorem to the bipartite graph $H$ with bipartition $A=\set{2,4,\ldots,n},\,B=\set{1,3,\ldots,n-1}$. $H$ has an edge $\set{a,b}$ iff $(a,b)$ is an edge of $D_m$. The stopping time $\vec{\t}_1$ is for $H$ to have minimum degree one and w.h.p. this will be enough for $H$ to have a perfect matching. After this the proof continues more or less as in the proof of Theorem \ref{th1}. The ``zebraic'' corollary to Theorem \ref{th1d} is not so simple. If we follow the undirected argument then we see that we need to exert control over the orientations of the black and white perfect matchings, they have to be compatible in some sense, and the hitting time for this is not so obvious.

The proof of Theorem \ref{th5d} is almost identical to that of Theorem \ref{th5}. We simply change $I_v$ in Section \ref{th5proof} to
$$I_v=\set{i:d_-^{(i)}>0\text{ and }d_+^{(i+1)}>0},$$
where $d_-^{(i)}$ is the number of edges of color $i$ oriented into $v$ and $d_+^{(i+1)}$ is the number of edges of color $i+1$ oriented out of $v$.

The proof of Theorem \ref{th4d} follows that of Theorem \ref{th4}.

\end{document}